\theoremstyle{plain}
\newtheorem{thm}{Theorem}[section]
\newtheorem{theorem}[thm]{Theorem}
\newtheorem{prop}[thm]{Proposition}
\newtheorem{corollary}[thm]{Corollary}
\newtheorem{cj}[thm]{Conjecture}
\theoremstyle{definition}
\newtheorem{definition}[thm]{Definition}
\newtheorem{remark}[thm]{Remark}
\numberwithin{equation}{section}
\newtheorem*{theorem*}{Theorem}
\newcommand{\set}{\EuScript{S}\mathsf{et}}
\newcommand{\sset}{\mathsf{s}\EuScript{S}\mathsf{et}}
\newcommand{\sps}{\mathsf{s}\EuScript{P}\mathsf{re}}
\newcommand{\ssh}{\mathsf{s}\EuScript{S}\mathsf{hv}}
\newcommand{\grp}{\EuScript{G}\mathsf{r}}
\newcommand{\ab}{\EuScript{A}\mathsf{b}}
\newcommand*{\pb}{\mbox{\LARGE{$\mathrlap{\cdot}\lrcorner$}}}
\providecommand{\bysame}{\leavevmode\hbox to3em{\hrulefill}\thinspace}
\providecommand{\MR}{\relax\ifhmode\unskip\space\fi MR }
\providecommand{\MRhref}[2]{%
	\href{http://www.ams.org/mathscinet-getitem?mr=#1}{#2}
}
\begin{document}
	
\title{Enumerating Non-Stable Vector Bundles}
\author{Peng DU}

\begin{abstract}
		In this article, we establish a motivic analog of an enumeration result of James-Thomas \cite{JTh65} on non-stable vector bundles in topological setting. Using this, we obtain results on enumeration of projective modules of rank $d$ over a smooth affine $k$-algebra $A$ of dimension $d$, recovering in particular results of Suslin and Bhatwadekar on cancellation of such vector bundles. Admitting a conjecture of Asok and Fasel, we prove cancellation of such vector bundles of rank $d-1$ if the base field $k$ is algebraically closed. We also explore the cancellation properties of symplectic vector bundles.
\end{abstract}

\maketitle
\makeatletter
\patchcmd{\@tocline}
{\hfil}
{\leaders\hbox{\,.\,}\hfil}{}{}
\makeatother
{\large {\tableofcontents}}

\section{Introduction}

We begin with some classical problems in commutative ring and module theory. Let $R$ be a commutative ring, and $P$ be a finitely generated projective $R$-module (of constant rank $n$). We say that the projective module $P$ is \emph{cancellative} if for any $m>0$, $P\oplus R^m\cong Q\oplus R^m$ for some $R$-module $Q$ implies $P\cong Q$. Note that if $R$ is a local ring, then any finitely generated projective $R$-module is cancellative (since such modules are always free and the isomorphism class of a finitely generated free module is uniquely determined by its rank in that case).

We have the \emph{Serre splitting theorem}, which says that for a finitely generated projective module $P$ over a commutative noetherian ring $R$ of Krull dimension $d$, if its rank is at least $d+1$, then it has a free summand of rank $1$; and \emph{Bass cancellation theorem}: any finitely generated projective module $P$ over $R$ of rank at least $d+1$ is cancellative.

So it is interesting to understand further cancellation properties of projective modules of lower rank. In order to better apply results in algebraic geometry, we restrict to the case when the commutative ring $A$ is a $k$-algebra, $k$ is a field. Even for $k$ algebraically closed ($k=\bar{k}$), Mohan Kumar \cite{Kum85} gives a negative answer in the case when the rank $n=d-2$ is prime ($d=\dim A$): there exists a rank $n=d-2$ stably free module which is not free. We are thus left to deal with $n=d$ and $n=d-1$. For the case $n=d$, Suslin \cite{Sus77a} confirms cancellation in the case $k=\bar{k}$. This result was later extended to the case where the base field is a $C_1$-field by Bhatwadekar \cite[Theorem 4.1 and Remark 4.2]{Bha03}. For the case $n=d-1$, Fasel-Rao-Swan \cite{FRS} confirm cancellation for stably free modules in the case $k=\bar{k}$ (with other mild conditions).

\vspace{3mm}

The goal of this article is to give some enumeration results on non-stable vector bundles (oriented, or symplectic) and restudy the cancellation problem for projective modules over algebras, using modern tools. Precisely, we establish some enumeration formulas for such non-stable vector bundles, and based on such formulas, we explore cancellation properties of projective modules using obstruction theory in $\mathbb{A}^1$-homotopy theory (hence we need to assume our affine $k$-algebra $A$ is smooth), following some ideas in \cite{AF14, AF15}.

Concretely, we extend a topological result of James-Thomas \cite{JTh65} to the motivic homotopy setting, namely, we identify certain set of homotopy lifting classes with nice source and target---the mathematical objects we want to enumerate---with the cokernel of a certain map of abelian groups associated to some $\mathbb{A}^1$-homotopy classes. We then give a formula for that map, following the method of \cite{JTh65}; similar methods were also explored in \cite{Sut69}. This is essentially a study of the derived mapping space in $\mathbb{A}^1$-homotopy theory (see \Cref{bgl-comp}), taking advantage of the fact that stable things are abelian group objects even in the \emph{unstable} motivic homotopy category.

Combining with the Suslin matrix construction and some known results on the cohomology of certain motivic spaces, we are able to show that the image of that map contains $n!$ multiple of the target abelian group, which is the cohomology of the variety ${\rm Spec}\ A$ with coefficients in some Milnor (or Milnor-Witt) K-theory sheaf, by manipulating some characteristic classes; this gives an upper bound of the set of the lifting classes we study. Hence the set of homotopy lifting classes is a singleton if the target abelian group is $n!$-divisible. It is worth remarking that Suslin's matrix construction plays a very important role here---it provides \emph{sufficiently many elements} in the target abelian group which lie in the image of the map in question.

Further, using recent results---the Rost-Schmid complex and the motivic Bloch-Kato conjecture, one can show that the target cohomology group is indeed $n!$-divisible in many cases (e.g. when all elements in the residue fields at all closed points of ${\rm Spec}\ A$ are $n!$-th power).

In the $n=d$ case, we will mostly focus on the odd rank case. For the even rank case, there are some further difficulties. On the one hand, as the $\mathbb{A}^1$-fundamental group of $\mathrm{BGL}_{n}$ is non-trivial (i.e. $\mathrm{BGL}_{n}$ is not $\mathbb{A}^1$-simply connected), the relevant $\mathbb{A}^1$-homotopy fiber sequence we get via $\mathbb{A}^1$-homotopical obstruction theory is not principal hence do not fall into our framework as in \cite{JTh65}; so we make the compromise that we restrict to enumeration for \emph{oriented} vector bundles. On the other hand, the first nontrivial homotopy sheaf of the relevant space is more difficult to study, even in the case of oriented bundles. Nevertheless, it is still quite useful to get results using $\mathrm{BSL}_{n}, \mathrm{BSp}_{2n}$ etc. instead of $\mathrm{BGL}_{n}$, hence studying enumeration problem for non-stable \emph{oriented} (or \emph{symplectic}) vector bundles. So we will mostly discuss enumeration problem for oriented vector bundles.

The main $\mathbb{A}^1$-homotopy foundation comes from Morel's work \cite{Mor} where the base fields are all assumed to be perfect (and infinite, but later, Hogadi-Kulkarni \cite{HK} gave a published version of Gabber's presentation lemma for finite fields, which confirms that Morel's results are indeed also true for finite fields), but every field $k$ contains a perfect (prime) subfield $k_0$, and since the $k$-group schemes $\mathrm{GL}_{n}/k, \mathrm{SL}_{n}/k$ are extended from $k_0$, our arguments in the text \emph{hold for a general base field $k$} as well; cf. \cite[Comments on the proof of Theorem 3.1.7]{AFHo19}. To make the statements more concise, we just assume everywhere that $k$ is perfect.

Our main result in the $n=d$ case is the following (for more details, see \Cref{enu-oddrk} and \Cref{compute-delta}).
\begin{theorem*}
	Let $k$ be a field, $A$ a smooth affine $k$-algebra of odd Krull dimension $d\geqslant 3$, and $X={\rm Spec}(A)$. Let $\xi$ be a stable vector bundle over $X$, whose classifying map is still denoted $\xi: X\to\mathrm{BGL}$. Let $\varphi_*: [X, \mathrm{BGL}_{d}]_{\mathbb{A}^1}\to[X, \mathrm{BGL}]_{\mathbb{A}^1}$ be the stabilizing map. Then there is a bijection
	\[\varphi_*^{-1}(\xi)\cong{\rm coker}\big({\rm K}_1(X)\xrightarrow{\Delta(c_{d+1}, \xi)}{\rm H}^{d}(X; \mathbf{K}^{\mathrm{M}}_{d+1})\big),\]
	where the homomorphism $\Delta(c_{d+1}, \xi)$ is given as follows: for $\beta\in{\rm K}_1(X)=[X, \Omega\mathrm{BGL}]_{\mathbb{A}^1}=[X, \mathrm{GL}]_{\mathbb{A}^1}$,
	\[\Delta(c_{d+1}, \xi)\beta=(\Omega c_{d+1})(\beta)+\sum_{r=1}^{d}((\Omega c_{r})(\beta))\cdot c_{d+1-r}(\xi).\]
	Here, $(\Omega c_{i})(\beta)\in {\rm H}^{i-1}(X; \mathbf{K}^{\mathrm{M}}_{i})$ for $i=1,\cdots, d+1$ are the Chern classes of $\beta$ and $c_{i}(\xi)$ are the ordinary Chern classes of $\xi$.
	
	In other words, given a vector bundle $\xi$ of rank $d$ over $X$, then the set of isomorphism classes vector bundles $\nu$ such that $\nu\oplus\mathscr{O}_X\cong\xi\oplus\mathscr{O}_X$ is in bijection with ${\rm coker}\Delta(c_{d+1}, \xi)$.
	
	In case $A$ is of even dimension $d\geqslant 2$ and the base field is of cohomological dimension at most $2$, the same results as above hold for oriented bundles (see \Cref{enu-evenrk}).
\end{theorem*}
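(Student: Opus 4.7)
The plan is to invoke the motivic James--Thomas framework described in the introduction and then carry out the universal Chern-class calculation. Since $\mathrm{BGL}$ is an infinite-loop object, and in particular a group object, in the unstable $\mathbb{A}^1$-homotopy category, the $\mathbb{A}^1$-fibre sequence $F\to\mathrm{BGL}_{d}\xrightarrow{\varphi}\mathrm{BGL}$ is principal. Pulling back along $\xi:X\to\mathrm{BGL}$ and applying the Puppe-style exact sequence to $\mathrm{Map}(X,-)$ then yields
\[ [X,\mathrm{GL}]_{\mathbb{A}^1}\xrightarrow{\;\Delta\;}[X,F]_{\mathbb{A}^1}\to\varphi_*^{-1}(\xi)\to *, \]
where $\Delta$ is the connecting homomorphism (depending on $\xi$). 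The preimage $\varphi_*^{-1}(\xi)$ is non-empty because $\dim X=d$ forces any stable class to descend to a rank-$d$ bundle by Serre splitting, so exactness produces the desired bijection $\varphi_*^{-1}(\xi)\cong\ck\Delta$.

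Next I would identify the source and target of $\Delta$ with the groups appearing in the statement. By definition $[X,\Omega\mathrm{BGL}]_{\mathbb{A}^1}=[X,\mathrm{GL}]_{\mathbb{A}^1}=\kk_1(X)$. For the target, the crucial input is Morel's computation: for odd $d\geqslant 3$, the fibre $F$ is $(d-1)$-connected in the $\mathbb{A}^1$-sense with first non-trivial homotopy sheaf $\pi_{d}^{\mathbb{A}^1}(F)\cong\mathbf{K}^{\mathrm{M}}_{d+1}$. This follows from the stabilization tower $\mathrm{BGL}_d\to\mathrm{BGL}_{d+1}\to\cdots$ together with Morel's identification $\pi_{d}^{\mathbb{A}^1}(\mathbb{A}^{d+1}\smallsetminus\{0\})\cong\mathbf{K}^{\mathrm{MW}}_{d+1}$, which in the present odd-$d$ range collapses onto its Milnor quotient. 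Since $X$ is smooth affine of Krull dimension $d$, its $\mathbb{A}^1$-cohomological dimension for strictly $\mathbb{A}^1$-invariant sheaves is at most $d$, so the Postnikov tower of $F$ contributes only in degree $d$ and one obtains $[X,F]_{\mathbb{A}^1}\cong\hh^{d}(X;\mathbf{K}^{\mathrm{M}}_{d+1})$.

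Third I would compute $\Delta$ explicitly following the James--Thomas method. The connecting map $\Delta$ is induced by the $\Omega\mathrm{BGL}$-action on $F$ coming from the principal structure, and by naturality is determined by its universal incarnation on the composite $\mathrm{GL}\times\mathrm{BGL}_d\to\mathrm{BGL}_d\xrightarrow{\varphi}\mathrm{BGL}$ in which the first arrow is the translation action. The computation then reduces to the Whitney sum formula $c(\xi\oplus\beta)=c(\xi)\cdot c(\beta)$ on the group object $\mathrm{BGL}$; extracting the degree-$(d+1)$ component of this product, looping once to pass from stable Chern classes on $\mathrm{BGL}$ to their $\Omega$-versions $(\Omega c_{r})(\beta)\in\hh^{r-1}(X;\mathbf{K}^{\mathrm{M}}_{r})$, and specialising along $\xi:X\to\mathrm{BGL}_d$, yields precisely
\[ \Delta(c_{d+1},\xi)(\beta)=(\Omega c_{d+1})(\beta)+\sum_{r=1}^{d}(\Omega c_{r})(\beta)\cdot c_{d+1-r}(\xi). \]

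The hardest step is the identification $\pi_{d}^{\mathbb{A}^1}(F)\cong\mathbf{K}^{\mathrm{M}}_{d+1}$ together with the verification that higher homotopy sheaves of $F$ contribute nothing to $[X,F]_{\mathbb{A}^1}$; it is precisely here that the odd-parity hypothesis on $d$ is used decisively, since in the even-dimensional case one genuinely needs Milnor--Witt coefficients and the orientation hypothesis (explaining the restriction in the last sentence of the theorem). The Leibniz-type computation of $\Delta$ is conceptually clear but requires the careful compatibility, developed earlier in the paper (cf.\ \Cref{bgl-comp}), between the $H$-space structure on $\mathrm{BGL}$ and the direct-sum operation on vector bundles; the fact that $\mathrm{BGL}_d$ is itself not an $H$-space is exactly what forces one to pass universally to $\mathrm{BGL}$ and then specialise, which in turn produces the dependence on the Chern classes $c_{d+1-r}(\xi)$ of the base class.
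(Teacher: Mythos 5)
Your outline identifies the right ingredients (the connectivity of $F=F_d$, the identification $\pi_{d}^{\mathbb{A}^1}F\cong\mathbf{K}^{\mathrm{M}}_{d+1}$ for $d$ odd, the cohomological-dimension bound on $X$, and the Whitney sum formula), and the overall shape of your argument tracks the alternative proof via derived mapping spaces that the paper acknowledges in the remark after \Cref{chern-delta}. But two of your central moves are not correct as stated and are exactly where the hard work of \S5--\S6 of the paper lives.

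\textbf{The principality claim is false.} You write that the $\mathbb{A}^1$-fibre sequence $F\to\mathrm{BGL}_d\xrightarrow{\varphi}\mathrm{BGL}$ is principal because $\mathrm{BGL}$ is an abelian group object. This does not follow, and in fact $\varphi$ is not a principal fibration: a principal fibration in the obstruction-theoretic sense has fibre an Eilenberg--MacLane space and is classified by a map $B\to\mathrm{K}(\mathbf{M},n+1)$, whereas $F$ here has several nonvanishing homotopy sheaves (\S4 computes $\pi_{d+1}^{\mathbb{A}^1}F_d$ as well). Even the weaker property that $\pi_1^{\mathbb{A}^1}\mathrm{BGL}=\mathbb{G}_m$ acts trivially on all $\pi_*^{\mathbb{A}^1}F$ does not follow from the base being an abelian group object --- the Klein bottle fibred over $S^1=\mathrm{K}(\mathbb{Z},1)$ has nontrivial $\pi_1$-action on the fibre even though the base is an abelian group. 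What the paper actually does is replace $\varphi$ with its first Moore--Postnikov stage $\mathrm{BGL}_d\to E\to\mathrm{BGL}$ (\Cref{MotMP}); the fibration $E\to\mathrm{BGL}$ is genuinely principal with fibre $\mathrm{K}(\mathbf{K}^{\mathrm{M}}_{d+1},d)$ once one \emph{checks}, using oddness of $d$, that $\mathbb{G}_m$ acts trivially on $\mathbf{K}^{\mathrm{M}}_{d+1}$, and then one separately shows $[X,\mathrm{BGL}_d]_{\mathbb{A}^1}\to[X,E]_{\mathbb{A}^1}$ is a bijection by climbing the remaining stages against $\mathrm{cd}(X)=d$.

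\textbf{The jump from ``exactness'' to the cokernel description is a genuine gap.} The Puppe-type sequence you invoke, $[X,\mathrm{GL}]_{\mathbb{A}^1}\xrightarrow{\Delta}[X,F]_{\mathbb{A}^1}\to\varphi_*^{-1}(\xi)\to *$, is not a Puppe sequence. The actual exact sequence $[X,\Omega\mathrm{BGL}]\to[X,F]\to[X,\mathrm{BGL}_d]\to[X,\mathrm{BGL}]$ is a sequence of pointed sets with \emph{the trivial bundle} as base point; passing to $\varphi_*^{-1}(\xi)$ for a general $\xi$ requires (a) re-basepointing and identifying $\varphi_*^{-1}(\xi)$ with an orbit space of $\hh^d(X;\mathbf{K}^{\mathrm{M}}_{d+1})$ under an action whose stabilizer must be computed (the paper's \Cref{lesfib} and \Cref{fundact}), and (b) showing that this stabilizer coincides with $\im\,\Delta(c_{d+1},\xi)$, where $\Delta(c_{d+1},\xi)$ is the composite of the translation isomorphism $T_\xi$ with the loop of the $k$-invariant (the content of \Cref{lift=ck} and \Cref{chern-delta}). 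The dependence of $\Delta$ on $\xi$ and the appearance of the lower Chern classes $c_{d+1-r}(\xi)$ are produced exactly by $T_\xi$ and the Whitney formula $m^*c_{d+1}=\sum_r c_r\times c_{d+1-r}$; your final formula is asserted but not derived from these. One further missing input: the identification $\theta=c_{d+1}$ of the first $k$-invariant with the universal Chern class is a theorem in its own right (cited from \cite{AF16a} in the paper), not a formality. As written, your argument sketches the right strategy but cannot be regarded as a proof.
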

As a corollary, we see that if $k$ is algebraically closed or of cohomological dimension at most $1$ then all rank $d$ vector bundles are cancellative (\Cref{divisability-cancel}), hence recovering the cancellation theorems of Suslin and Bhatwadekar.

In fact, the discussion in \Cref{d!-div} tells us that our result generalizes Bhatwadekar's in several ways: firstly, we relax Bhatwadekar's characteristic zero assumption on the base field (e.g. our result is valid if $k$ is a finite field, which was also confirmed by Kumar-Murthy-Roy \cite{KMR88}); secondly, we give an answer to Bhatwadekar's question at the end of the introduction section in \cite{Bha03}, at least if the affine algebra $A$ is smooth over $k$; thirdly, our result is applicable to all fields of cohomological dimension at most $1$, and such fields are strictly more than $C_1$-fields, see e.g. \cite{CT05}. Similar result also holds for $d$ even and oriented vector bundles (see \Cref{enu-evenrk} and \Cref{divisability-cancel-even}). In \Cref{d!-div}, we also give a class of examples for which both general criteria (Bhatwadekar's and ours here, which need cohomological dimension at most $1$ to guarantee cancellation in general) lose efficacy, yet we can still use \Cref{divisability-cancel} (2) in that situation.

It is also possible to explore the idea in this article to get further cancellation results in the rank $d-1$ case, which is more difficult. In \S 7, we obtain a conditional cancellation result by a study of a 2-stage Moore-Postnikov factorization, in which the first stage is very similar to the results in the rank $d$ case and gives a condition on the characteristic of the base field $k$ (namely, ${\rm char}(k)=0$ or ${\rm char}(k)\geqslant d$; see \Cref{divisability-cancel-d-1}) to ensure that the isomorphism class of a stable bundle has at most one lifting to the first stage in the tower; for the second stage, we invoke a conjecture of Asok-Fasel (\Cref{afcj}) describing $\pi_{d}^{\mathbb{A}^1}(\mathbb{A}^{d}\setminus 0)$, which gives a vanishing result on some cohomology group and ensures that the isomorphism class of the lifting map from the first stage to the second stage in the tower (exists and) is unique, which again involves studying the derived mapping space in $\mathbb{A}^1$-homotopy theory, taking advantage of the abelian group object structures.

The (conditional) cancellation result in the $n=d-1$ case is the following (see \Cref{cancel-d-1}).

\begin{theorem*}
	Assume that the base field $k$ is algebraically closed. Let $A$ be a smooth $k$-algebra of Krull dimension $d\geqslant 3$ and assume ${\rm char}(k)=0$ or ${\rm char}(k)\geqslant d$. Let $X={\rm Spec}\ A$. Assume that \Cref{afcj} holds, i.e. we have a sequence
	\begin{equation}
	\mathbf{K}^{\mathrm{M}}_{d+2}/24\to\pi_{d}^{\mathbb{A}^1}(\mathbb{A}^{d}\setminus 0)\to\mathbf{GW}^d_{d+1}\to0
	\end{equation}
	of homomorphisms which becomes exact after $d-3$-fold contraction, then every oriented rank $d-1$ vector bundle over $X$ is cancellative.
\end{theorem*}

\vspace{3mm}

 In \S 8, we prove a cancellation result for symplectic vector bundles, which we summarize as follows (see \Cref{borel-delta,enu-spbdl}).
 \begin{theorem*}
 	Let  $k$ be a perfect field with ${\rm char}(k)\neq 2$ and ${\rm c.d.}_2(k)\leqslant 2$, $X={\rm Spec}(A)$ be a smooth affine $k$-scheme of dimension $d=2n+1\geqslant 3$. Let $\xi$ be a stable symplectic vector bundle over $X$, whose classifying map is still denoted by $\xi: X\to\mathrm{BSp}$.
 	\begin{enumerate}[label=(\arabic*)]
 		\item There is an abelian group homomorphism
 		\[\Delta(b_{n+1}, \xi): {\rm KSp}_1(X)=[X, \Omega\mathrm{BSp}]_{\mathbb{A}^1}\to[X, \mathrm{K}(\mathbf{K}^{\mathrm{MW}}_{2n+2}, 2n+1)]_{\mathbb{A}^1}={\rm H}^{2n+1}(X; \mathbf{K}^{\mathrm{MW}}_{2n+2})\]
 		associated to the class $\xi\in[X, \mathrm{BSp}]_{\mathbb{A}^1}$ given by
 		\[\Delta(b_{n+1}, \xi)(\beta)=(\Omega b_{n+1})(\beta)+\sum_{r=1}^{n}((\Omega b_{r})(\beta))\cdot b_{n+1-r}(\xi),\]
 		whose cokernel is in bijection with $\EuScript{V}_{2n}^{\rm Sp}(X, \xi)$, the set of rank $d-1=2n$ symplectic vector bundles over $X$. Here $b_i\in{\rm H}^{2i}(\mathrm{BSp}; \mathbf{K}^{\mathrm{MW}}_{2i})$ is the $i$-th Borel class (as in \cite{PW10}).
 		\item We have $(2\cdot d!)\cdot{\rm H}^d(X; \mathbf{K}^{\mathrm{M}}_{d+1})\subset{\rm im}(\tau\Delta(b_{n+1}, \xi))$.
 		\item If ${\rm H}^d(X; \mathbf{K}^{\mathrm{M}}_{d+1})$ is $d!$-divisible, then ${\rm coker}\Delta(b_{n+1}, \xi)=0$. So in this case, any rank $2n=d-1$ symplectic vector bundle is cancellative. Moreover,  the map
 		\[(b_{n+1})_*: \pi_1({\rm RMap}(X, \mathrm{BSp}), \xi)\to\pi_1({\rm RMap}(X, \mathrm{K}(\mathbf{K}^{\mathrm{MW}}_{d+1}, d+1)), 0)={\rm H}^{d}(X; \mathbf{K}^{\mathrm{MW}}_{d+1})\]
 		is surjective for every $\xi\in[X, \mathrm{BSp}]_{\mathbb{A}^1}$.
 		\item If ${\rm c.d.}(k)\leqslant 1$, then every rank $2n=d-1$ symplectic vector bundle over $X$ is cancellative.
 	\end{enumerate}
 \end{theorem*}

In particular, for $d=2n+1\geqslant 3$ and a given symplectic vector bundle $\xi$ of rank $d-1$ over $X$, if ${\rm c.d.}(k)\leqslant 1$, then any symplectic vector bundle $\nu$ such that $\nu\oplus H^m\cong\xi\oplus H^m$ is in fact isomorphic to $\xi$, where $H$ is the trivial rank $2$ symplectic vector bundle over $X$.

\vspace{3mm}

The structure of this article is briefly as follows. Firstly, as a warm-up, we start in \S 2 with some fundamental constructions for simplicial sets, some already familiar from classical homotopy theory. In \S 3, we present one of the most important input of $\mathbb{A}^1$-homotopy theory, the motivic Moore-Postnikov tower (in a form slightly more general than that in the existing literature), which is our obstruction-theoretic tool; and (relative) cohomology groups in our motivic homotopy world, this is useful and important in analyzing various maps in the sequel. In \S 4, we identify the first non-trivial $\mathbb{A}^1$-homotopy sheaf of the $\mathbb{A}^1$-homotopy fiber of the map from non-stable to stable classifying spaces. In \S 5, we set up the framework of enumeration of lifting $\mathbb{A}^1$-homotopy classes, following ideas developed in James-Thomas \cite{JTh65} in the classical homotopy setting. In \S 6, we apply the previous results to enumeration problems for non-stable (oriented) vector bundles by some computations and identifying some characteristic classes (with mild conditions); along the way, we also describe (in \Cref{chern-delta}) more concretely and more conceptually the computing formula we give by studying the derived mapping space in $\mathbb{A}^1$-homotopy theory, which says that the formula we provide essentially describes the homomorphism of the fundamental group of a general component of the derived mapping space in question to another derived mapping space given by the $k$-invariant in question, surjectivity of which gives uniqueness of the lifting class. In \S 7, we give a (conditional) cancellation result in the $n=d-1$ case, provided the Asok-Fasel \Cref{afcj} holds. In the last section, we establish an (unconditional) cancellation result for symplectic vector bundles, in a similar fashion as before, taking great advantages of the results developed in the previous sections, with slightly more care.

\section{Some fundamental results and constructions for simplicial sets}

Let $\sset$ be the category of simplicial sets, with the usual (Kan-Quillen) model structure. In this section, we focus on the associated pointed category $\sset_*$ of pointed simplicial sets, with the induced model structure (see \cite{Hir}).

We first give a generalization of the usual path fibration to the relative case. Let $q: (E, v)\rightarrow (B, b)$ be a Kan fibration of pointed Kan complexes, with fiber $F$ over $b\in B_0$. Assume $q$ admits a section $s: B\rightarrow E$ (we do not require $s(b)=v$). We denote this by the diagram
\begin{equation}
\begin{tikzcd}
F \arrow[r] & E  \arrow[r,shift left,"q"]
\arrow[r,<-,shift right,swap,"s"]& B.
\end{tikzcd}
\label{splitfib}
\end{equation}
We define a map $E\rightarrow B^{\Delta^1}$ to be the composite $E\xrightarrow{q} B=B^{\Delta^0}\rightarrow B^{\Delta^1}$ (the latter arrow is given by taking ``constant paths'', i.e. the map $(\Delta^1\rightarrow\Delta^0)^*$); it appears as the left vertical arrow in the second of the following commutative diagrams
\[\begin{tikzcd}[column sep=4em]
E^{\Delta^1} \arrow[r, "(\partial\Delta^1\hookrightarrow\Delta^1)^*"] \arrow[d, "q^{\Delta^1}"']  & E^{\partial\Delta^1} \arrow[d, "q^{\partial\Delta^1}"] \\
B^{\Delta^1} \arrow[r, "(\partial\Delta^1\hookrightarrow\Delta^1)^*"']          & B^{\partial\Delta^1}
\end{tikzcd}
\qquad
\text{and}
\qquad
\begin{tikzcd}[column sep=4em]
E \arrow[r, "{(sq, 1)}"] \arrow[d]  & E^{\partial\Delta^1}=E\times E \arrow[d, "q^{\partial\Delta^1}=q\times q"] \\
B^{\Delta^1} \arrow[r, "(\partial\Delta^1\hookrightarrow\Delta^1)^*"']          & B^{\partial\Delta^1}=B\times B.
\end{tikzcd}\]
We obtain the induced maps
$$\alpha: E^{\Delta^1}\rightarrow B^{\Delta^1}\times_{B^{\partial\Delta^1}}E^{\partial\Delta^1} \text{\ and\ } \beta: E\rightarrow B^{\Delta^1}\times_{B^{\partial\Delta^1}}E^{\partial\Delta^1}.$$

We make $E^{\Delta^1}$ and $B^{\Delta^1}$ pointed by taking as base points the composites $\Delta^1\rightarrow\Delta^0\xrightarrow{v}E$ and  $\Delta^1\rightarrow\Delta^0\xrightarrow{b}B$ respectively; the other spaces also have similar base points. Then the map $\alpha: E^{\Delta^1}\rightarrow B^{\Delta^1}\times_{B^{\partial\Delta^1}}E^{\partial\Delta^1}$ is a fibration (see \cite[Proposition 9.3.8]{Hir}) with fibre over the above chosen base point the loop space $\Omega F$. Now we define a map $u$ via the pull-back diagram
\begin{equation}
\begin{tikzcd}
\mathrm{P}_BE \arrow[r] \arrow[d, "u"']\arrow[dr, phantom, "\pb", near start]  & E^{\Delta^1}  \arrow[d, "\alpha"] \\
E \arrow[r, "\beta"']          & B^{\Delta^1}\times_{B^{\partial\Delta^1}}E^{\partial\Delta^1},
\end{tikzcd}
\qquad
\text{yielding a fibre sequence}
\qquad
\begin{tikzcd}
\Omega F \arrow[r] & \mathrm{P}_BE  \arrow[r, "u"] & E.
\end{tikzcd}
\label{switchfib}
\end{equation}
Decoding the definition, we see that, geometrically, the \emph{relative path space} $\mathrm{P}_BE$ is the space whose vertices are pairs $(e, \sigma)$ with $e\in E_0$ and $\sigma$ a path in $E$ from the point $sq(e)$ to $e$, totally lying inside the fibre of $q: E\rightarrow B$ over $q(e)$; the $q^{\Delta^1}: E^{\Delta^1}\rightarrow B^{\Delta^1}$ component of $\alpha$ in the pull-back is taking care of the paths lying totally inside some fibers. The map $u(e, \sigma)=e$ is just the endpoint of such a path. We also have a map
$$\nu: B\rightarrow\mathrm{P}_BE$$
by taking ``constant paths'' at points of the image of $s: B\to E$ (rigorously, one defines it by a commutative diagram and using the universal property of the pull-back in diagram (\ref{switchfib})). One then checks that $u\nu=s$ and
\[(qu)\nu=\mathrm{id}_B, \nu(qu)\sim\mathrm{id}_{\mathrm{P}_BE},\]
hence $B$ is a retract of $\mathrm{P}_BE$ and there is a homotopy equivalence pair $qu: \mathrm{P}_BE\rightleftarrows B: \nu$; geometrically, the homotopy equivalence can be given by shrinking every path to its starting point.

\begin{definition}
	We define the \emph{relative loop space} $\Omega_BE$ via the cartesian square
	\begin{equation}
	\begin{tikzcd}
	\Omega_BE \arrow[r] \arrow[d, "\Omega q"']\arrow[dr, phantom, "\pb", near start]  & \mathrm{P}_BE \arrow[d, "u"] \\
	B \arrow[r, "s"']          & E.
	\end{tikzcd}
	\label{rellp}
	\end{equation}
\end{definition}
We obtain another fibration $\Omega q: \Omega_BE\rightarrow B$ with fibre $\Omega F$, the total space $\Omega_BE$ has points (vertices) the loops $\sigma$ in $E$ based at points in $B$, totally lying inside the fibre of $q: E\rightarrow B$; $\Omega q$ has a canonical section $s': B\rightarrow\Omega_BE$ (given by taking ``constant loops''). 

Note that if $B$ is just a point, then all these constructions specialize to the usual path fibration $\Omega E\rightarrow\mathrm{P}E\rightarrow E$.

\vspace{4mm}

Now we assume $G$ is a (discrete) group and acts on an abelian group $M$ via a group homomorphism $\rho: G\rightarrow\mathrm{Aut}(M)$. Then $G$ acts on the Eilenberg-Mac Lane space $\mathrm{K}(M, n), n\geqslant 1$. We have the \emph{twisted Eilenberg-Mac Lane space}
$$\mathrm{K}^G(M, n):=\mathrm{E}G\times_G\mathrm{K}(M, n),$$
fitting into a fibre sequence $\mathrm{K}(M, n)\rightarrow\mathrm{K}^G(M, n)\xrightarrow{q}\mathrm{B}G$ when $n\geqslant 2$, which admits a section
$$s=s_G: \mathrm{B}G\rightarrow\mathrm{K}^G(M, n)$$
induced by the inclusion of the base point of $\mathrm{K}(M, n)$ into $\mathrm{K}(M, n)$. It is easy to find that for $n\geqslant 2$,
\begin{equation*}
\pi_j(\mathrm{K}^G(M, n)) = \begin{cases}
\pi_1\mathrm{B}G=G,  & j=1; \\
\pi_j(\mathrm{K}(M, n))=M,  & j=n; \\
0,  & j\neq 1, n.
\end{cases}
\end{equation*} 
 We are thus in the situation of diagram (\ref{splitfib}):
 \begin{equation}
 \begin{tikzcd}
 \mathrm{K}(M, n) \arrow[r] & \mathrm{K}^G(M, n)  \arrow[r,shift left,"q"]
 \arrow[r,<-,shift right,swap,"s"]& \mathrm{B}G.
 \end{tikzcd}
 \label{twem}
 \end{equation}
The previous constructions thus yield a fibre sequence
\begin{equation}
\begin{tikzcd}
\Omega \mathrm{K}(M, n)\simeq\mathrm{K}(M, n-1) \arrow[r] & \mathrm{P}^G(M, n)  \arrow[r, "u"] & \mathrm{K}^G(M, n),
\end{tikzcd}
\label{switchtwem}
\end{equation}
where $\mathrm{P}^G(M, n):=\mathrm{P}_{\mathrm{B}G}\mathrm{K}^G(M, n)$ in our previous notation. Recall that there is a pair of homotopy equivalences $$qu: \mathrm{P}^G(M, n)\rightleftarrows \mathrm{B}G: \nu.$$
Thus we get a homotopy fibre sequence
\begin{equation}
\begin{tikzcd}
\mathrm{K}(M, n-1) \arrow[r] & \mathrm{B}G  \arrow[r, "s"] & \mathrm{K}^G(M, n),
\end{tikzcd}
\label{switchtwem'}
\end{equation}

The following result is a consequence of the discussions in \cite[Chapter VI, \S 5]{GJ}.
\begin{prop}\label{univemfib}
	For any $\mathrm{K}(M, n)$-fibration $p: E\rightarrow B$ with $p_*: \pi_1(E)\to\pi_1(B)\cong G$ an isomorphism, there is a unique element $[k]\in [B, \mathrm{K}^G(M, n+1)]$ such that the fibration $p: E\rightarrow B$ is equivalent to the pull-back of $u: \mathrm{P}^G(M, n+1)\rightarrow\mathrm{K}^G(M, n+1)$ along $k$. Thus we have \emph{homotopy cartesian squares}
	\begin{equation}
	\begin{tikzcd}
	E \arrow[r] \arrow[d, "p"']  & \mathrm{P}^G(M, n+1)  \arrow[d, "u"] \\
	B \arrow[r, "k"']          & \mathrm{K}^G(M, n+1)
	\end{tikzcd}
	\qquad
	\text{and}
	\qquad
	\begin{tikzcd}
	E \arrow[r] \arrow[d, "p"'] & \mathrm{B}G  \arrow[d, "s"] \\
	B \arrow[r, "k"']          & \mathrm{K}^G(M, n+1).
	\end{tikzcd}
	\label{univhopb}
	\end{equation}
\end{prop}
\begin{remark}\label{univbgfib}
	From this result, together with the existence result of Moore-Postnikov systems for maps in $\sset$, one deduces a complete Moore-Postnikov decomposition for $\sset$ as in \cite[ChapterVI, \S3]{GJ}. There is a similar result in the motivic homotopy category, see \Cref{MotMP} in the next section.
\end{remark}

\section{Tools from motivic homotopy theory}\label{motmod}

We fix a noetherian finite dimensional scheme $S$. Write $\EuScript{S}\mathsf{m}_S$ for the category of $S$-schemes which are finitely presented and smooth over $S$; it is essentially small. We make it a Grothendieck site by equipping it with the Nisnevich topology (see \cite{MV}), called the \emph{Nisnevich site} of $S$.

Let $\sps(\EuScript{S}\mathsf{m}_S)$ and $\ssh(\EuScript{S}\mathsf{m}_S)$ be respectively the category of simplicial presheaves and simplicial sheaves on this site. We use $*$ to denote the final object in $\sps(\EuScript{S}\mathsf{m}_S)$, which is represented by $S$ via Yoneda lemma. Both the sheaf and presheaf categories have the Joyal-Jardine model structure, which are Quillen equivalent; $\mathbb{A}^1$-localizing we get the Morel-Voevodsky \emph{(unstable) motivic model structure} on $\sps(\EuScript{S}\mathsf{m}_S)$ and $\ssh(\EuScript{S}\mathsf{m}_S)$. The weak equivalences in the motivic model categories are called $\mathbb{A}^1$-weak equivalences. The two model structures are again Quillen equivalent, hence lead to the same homoptoy category $\EuScript{H}^{\mathbb{A}^1}(S)$---the
\emph{$\mathbb{A}^1$-homotopy category} or the
\emph{unstable motivic homotopy category} of $S$, see \cite{MV}. The homotopy classes will be denoted by $[-, -]_{\mathbb{A}^1}$ (or even $[-, -]$).

The motivic model category has most of the nice properties one could imagine: it is a cofibrantly generated, (left and right) proper, cellular, combinatorial, simplicial model category (i.e. it is a module over the symmetric monodial simplicial model category $\sset$), etc., see \cite{MV, Horn}. In particular, the general nice results of \cite{Hir, Hov} apply here.

There is an obvious pointed version and we obtain the
\emph{pointed $\mathbb{A}^1$-homotopy category} $\EuScript{H}_{*}^{\mathbb{A}^1}(S)$ of $S$. The homotopy classes will be denoted by $[-, -]_{\mathbb{A}^1, *}$ (or even $[-, -]$). For any $X\in\sps(\EuScript{S}\mathsf{m}_S), Y\in\sps(\EuScript{S}\mathsf{m}_S)_*$, we have the canonical isomorphism $[X, Y]_{\mathbb{A}^1}\cong[X_+, Y]_{\mathbb{A}^1, *}$, where $X_+:=X\amalg *$ is $X$ with a disjoint base point added.

We fix an \emph{$\mathbb{A}^1$-fibrant replacement functor} $\mathrm{L}_{\mathbb{A}^1}$ with respect to the Morel-Voevodsky model structure on $\sps(\EuScript{S}\mathsf{m}_k, \mathrm{Nis})_*$.

If $S={\rm Spec}(k), k$ a commutative ring, we simply write $\EuScript{S}\mathsf{m}_k$ for $\EuScript{S}\mathsf{m}_S$ and write $\EuScript{H}_{(*)}^{\mathbb{A}^1}(k)$ for $\EuScript{H}_{(*)}^{\mathbb{A}^1}(S)$.

\vspace{3mm}

In fact, other than the Joyal-Jardine model structure in the first step (before performing $\mathbb{A}^1$-localization), there are other Quillen equivalent model structures---one can choose the local projective model structure for either the presheaf or sheaf category (or even some intermediate model structure, between the local projective model structure and the local injective model structure). They have equivalent homotopy categories and each has some advantages in certain situations. See e.g. \cite{DHI, Jar15}.

\vspace{3mm}

Now assume $k$ is a perfect field. We write $\grp_{k}^{\mathbb{A}^1}$ for the category of \emph{strongly $\mathbb{A}^1$-invariant sheaves of groups} and $\ab_{k}^{\mathbb{A}^1}$ for the category of \emph{strictly $\mathbb{A}^1$-invariant sheaves of abelian groups} on the Nisnevich site $(\EuScript{S}\mathsf{m}_k, \mathrm{Nis})$ (see Morel \cite[Definition 1.7]{Mor}), the latter being an abelian category. A strictly $\mathbb{A}^1$-invariant sheaf of abelian groups is a strongly $\mathbb{A}^1$-invariant sheaf of groups. Conversely, after quite a lot of effort, Morel shows that, if ${\bf A}$ is a sheaf of abelian groups on $(\EuScript{S}\mathsf{m}_k, \mathrm{Nis})$ and ${\bf A}\in\grp_{k}^{\mathbb{A}^1}$, then ${\bf A}\in\ab_{k}^{\mathbb{A}^1}$ (see \cite{Mor}). Here is a construction essentially due to Voevodsky.
\begin{definition}
	For a sheaf of abelian groups $A$, we define its \emph{contraction} $A_{-1}$ by $A_{-1}(U):=\ker(A(U\times\mathbb{G}_m)\to A(U))$ induced by the map $U=U\times\{1\}\hookrightarrow U\times\mathbb{G}_m$. And we define inductively the \emph{iterated contraction} $A_{-(n+1)}=(A_{-n})_{-1}, n\in\mathbb{N}$ (by convention, $A_{0}=A_{-0}=A$). 
\end{definition}
The contraction $A_{-1}$ is also a sheaf of abelian groups, it can be equivalently defined by $A_{-1}(U)={\rm coker}(A(U)\to A(U\times\mathbb{G}_m))$ induced by the projection $U\times\mathbb{G}_m\to U$ (since the projection has a section given by the unit of $\mathbb{G}_m$). The contraction defines an exact functor $(-)_{-1}: \ab_{k}^{\mathbb{A}^1}\to\ab_{k}^{\mathbb{A}^1}$ (\cite[Lemma 2.32]{Mor}.

See also \cite{AFHo19} for a systematic study of the group-theoretic properties of such sheaves.
\begin{definition}
	For any pointed motivic space $X\in\sps(\EuScript{S}\mathsf{m}_k)_*$, we have $\pi_{1}^{\mathbb{A}^1}(X)\in\grp_{k}^{\mathbb{A}^1}$ and $\pi_{n}^{\mathbb{A}^1}(X)\in\ab_{k}^{\mathbb{A}^1}$ for $n>1$, where for $n\in\mathbb{N}, \pi_{n}^{\mathbb{A}^1}(X)$ is its \emph{$\mathbb{A}^1$-homotopy sheaf}, i.e. the Nisnevich sheafification of the presheaf
	\[U\mapsto[S^n\wedge U_+, X]_{\mathbb{A}^1, *}.\]
	
	We say that $X\in\sps(\EuScript{S}\mathsf{m}_k)_*$ is \emph{$\mathbb{A}^1$-connected} if $\pi_{0}^{\mathbb{A}^1}(X)=0$, \emph{$\mathbb{A}^1$-simply connected} if $\pi_{0}^{\mathbb{A}^1}(X)=\pi_{1}^{\mathbb{A}^1}(X)=0$, and \emph{$\mathbb{A}^1$-$n$-connected} if $\pi_{i}^{\mathbb{A}^1}(X)=0$, for any $i$ with $0\leqslant i\leqslant n$.
\end{definition}

Results in the classical topological setting are always the guiding model for the study of abstract homotopy theory, and indicate directions where to go, give (counter)examples, etc. The following result about long exact sequence of sheaves of motivic homotopy groups associated to an $\mathbb{A}^1$-homotopy fibre sequence is of fundamental importance in motivic homotopy theory.

\begin{prop}\label{motlesfib}
	Let $k$ be a perfect field, let $F\rightarrow E\xrightarrow{q}B$ be an $\mathbb{A}^1$-homotopy fibre sequence in the motivic model category $\sps(\EuScript{S}\mathsf{m}_k)_*$. There are the \emph{boundary map} maps $\partial: \pi_n^{\mathbb{A}^1}B\rightarrow\pi_{n-1}^{\mathbb{A}^1}F$, which are homomorphism of sheaves of groups for each $n\geqslant 2$, fitting into a long exact sequence
	\[\cdots\rightarrow\pi_{n}^{\mathbb{A}^1}F\xrightarrow{i_*}\pi_{n}^{\mathbb{A}^1}E\xrightarrow{q_*}\pi_n^{\mathbb{A}^1}B\xrightarrow{\partial}\pi_{n-1}^{\mathbb{A}^1}F\rightarrow\cdots\]
	\[\cdots\rightarrow\pi_1^{\mathbb{A}^1}B\xrightarrow{\partial}\pi_{0}^{\mathbb{A}^1}F\xrightarrow{i_*}\pi_{0}^{\mathbb{A}^1}E\xrightarrow{q_*}\pi_0^{\mathbb{A}^1}B.\]
	This sequence is natural in the $\mathbb{A}^1$-fibre sequence $F\rightarrow E\xrightarrow{q}B$. There is a (left) action of $\pi_1^{\mathbb{A}^1}B$ on $\pi_{0}^{\mathbb{A}^1}F$.
\end{prop}
 
Now we are ready to state (without proof) the following result about Moore-Postnikov towers in the motivic model categories, which, as in the classical topological setting, is a fundamental tool in obstruction theory; where we use $(\EuScript{C}\downarrow B)$ to denote the category of objects over an object $B$ in a category $\EuScript{C}$ (some authors may prefer the notation $\EuScript{C}/B$) and the notion of fibre sequences is in the sense of \cite{Hov}. For an exposition of this kind of result, see \cite[Appendix B]{Mor} and \cite[Theorem 2.12]{Asok12}; see also \cite{AF14, AF15} for related discussions. We only state the result for $\sps(\EuScript{S}\mathsf{m}_k)_*$ but it also works for $\ssh(\EuScript{S}\mathsf{m}_k)_*$.
\begin{prop}[Moore-Postnikov systems in motivic model category]\label{MotMP}
	Let $k$ be a perfect field, let $F\rightarrow E\xrightarrow{q}B$ be an $\mathbb{A}^1$-fibre sequence in the motivic model category $\sps(\EuScript{S}\mathsf{m}_k)_*$, with all of $F, E, B$ being $\mathbb{A}^1$-connected and $\mathbb{A}^1$-fibrant. Denote $\pi_1^{\mathbb{A}^1}E=:\mathbf{G}\in\grp_{k}^{\mathbb{A}^1}$. Then there is the \emph{Moore-Postnikov tower}
	\[E\rightarrow\cdots\xrightarrow{q_{n+1}}E_{n}\xrightarrow{q_{n}} E_{n-1}\xrightarrow{q_{n-1}}\cdots\xrightarrow{q_2} E_{1}\xrightarrow{q_1} E_{0}\xrightarrow{p_0}B,\]
	in $\sps(\EuScript{S}\mathsf{m}_k)_*$, and maps $i_n: E\rightarrow E_{n},\ p_n: E_{n}\rightarrow B$ with the following properties:
	\begin{enumerate}[label=\emph{(\arabic*)}]
		\item All the spaces $E_n$ are $\mathbb{A}^1$-fibrant, the maps $q_n, n\geqslant 1$ are $\mathbb{A}^1$-fibrations, and $p_0: E_0\to B$ is an $\mathbb{A}^1$-weak equivalence.
		\item $i_{n-1}=q_ni_n, p_{n-1}q_n=p_n, \forall n\geqslant 1$.
		\item $p_ni_n=q, \forall n\in\mathbb{N}$.
		\item The $\mathbb{A}^1$-homotopy fibre $F(q_{n})$ of $q_{n}\ (n\geqslant 1)$ is $\mathbb{A}^1$-weakly equivalent to $\mathrm{K}(\pi_n^{\mathbb{A}^1}F, n)$, hence for each $n\geqslant 1$ we have an $\mathbb{A}^1$-homotopy fibre sequence
		\[\mathrm{K}(\pi_n^{\mathbb{A}^1}F, n)\rightarrow E_n\xrightarrow{q_{n}} E_{n-1}.\]
		\item There are homotopy pullback diagrams in $(\sps(\EuScript{S}\mathsf{m}_k)\downarrow\mathrm{B}\mathbf{G})$ (with model structure induced from the Morel-Voevodsky motivic model structure on $\sps(\EuScript{S}\mathsf{m}_k)$)
		\[\xymatrix{
			E_n  \ar[r]  \ar[d]_-{q_n}  & \mathrm{B}\mathbf{G} \ar[d]^-{s_n}\\
			E_{n-1}  \ar[r]^-{k_{n+1}} &  \mathrm{K}^{\mathbf{G}}(\pi_{n}^{\mathbb{A}^1}F, n+1),}\]
		for a unique $[k_{n+1}]\in[E_{n-1}, \mathrm{K}^{\mathbf{G}}(\pi_{n}^{\mathbb{A}^1}F, n+1)]_{\mathbb{A}^1}$, for all $n\geqslant 2$.
		\item The map $E\rightarrow{\mathrm{holim}}_{n\in\mathbb{N}^{\mathrm{op}}}E_{n}$ is a local weak equivalence hence an $\mathbb{A}^1$-weak equivalence.
	\end{enumerate}
\end{prop}

Specializing to the case $E=*$ or $B=*$, one gets respectively the \emph{Whitehead tower} and \emph{Postnikov tower}. Moreover, by the above existence results, we obtain the following.
\begin{corollary}\label{seqmotpt}
	Assumption and notations as in \Cref{MotMP}, we have:
	\begin{enumerate}[label=\emph{(\arabic*)}]
		\item For any $j\leqslant n$, $(i_n)_*: \pi_j^{\mathbb{A}^1}E\xrightarrow{\cong}\pi_j^{\mathbb{A}^1}E_{n}$.
		\item For any $j\geqslant n+2$, $(p_n)_*: \pi_j^{\mathbb{A}^1}E_{n}\xrightarrow{\cong}\pi_j^{\mathbb{A}^1}B$.
		\item There are exact sequences
		\[0\rightarrow\pi_{n+1}^{\mathbb{A}^1}E_{n}\xrightarrow{(p_n)_*}\pi_{n+1}^{\mathbb{A}^1}B\xrightarrow{\partial}\pi_{n}^{\mathbb{A}^1}F,\ n\geqslant 0,\]
		where $\partial$ is the connecting homomorphism in the homotopy long exact sequence of the homotopy fibre sequence $F\rightarrow E\xrightarrow{q}B$.
		\item There are $\mathbb{A}^1$-homotopy fibre sequences
		\[F[n]\rightarrow E_n\xrightarrow{p_{n}} B,\]
		\[F(q_{n})\rightarrow F[n]\xrightarrow{q_{n}'}F[n-1]\ (n\geqslant 1),\]
		where the map $q_{n}'$ is induced from $q_{n}: E_n\rightarrow E_{n-1}$ and $F[n]$ is the $n$-th stage of the Postnikov tower for $F$.
	\end{enumerate}
\end{corollary}

\begin{remark}
	In practice, we usually consider the liftings of the homotopy class of a map from a smooth scheme $X$, regarded as a simplicial presheaf, to a motivic space $B$, to elements in $[X, E]_{\mathbb{A}^1}\cong[X_+, E]_{\mathbb{A}^1, *}$. In this situation, the above results on Moore-Postnikov systems is very useful in finding when a given $\mathbb{A}^1$-homotopy class can be lifted, especially when the $\mathbb{A}^1$-homotopy fibre $F$ of $E\xrightarrow{q}B$ is highly connected.
\end{remark}

We also need the following results about (relative) cohomologies. We fix a \emph{strictly $\mathbb{A}^1$-invariant} sheaf of abelian groups $\mathbf{M}$ on the Nisnevich site $(\EuScript{S}\mathsf{m}_k, \mathrm{Nis})$ (see \cite{Mor}). Let $Y\in\sps(\EuScript{S}\mathsf{m}_k)_*$ be a pointed simplicial presheaf, we define its \emph{reduced cohomology with coefficients in $\mathbf{M}$} to be
\[
\widetilde{{\rm H}}^n(Y; \mathbf{M}):=[Y, \mathrm{K}(\mathbf{M}, n)]_{\mathbb{A}^1, *}.
\]
So $\widetilde{{\rm H}}^n(\Sigma Y; \mathbf{M})=\widetilde{{\rm H}}^{n-1}(Y; \mathbf{M})$ and for $X\in\sps(\EuScript{S}\mathsf{m}_k)$, we have ${\rm H}^n(X; \mathbf{M})=\widetilde{{\rm H}}^n(X_+; \mathbf{M})$.

If $X\subset Z$, we define ${\rm H}^n(Z, X; \mathbf{M})$, the \emph{cohomology of the pair $(Z, X)$ with coefficients in $\mathbf{M}$}, to be the reduced cohomology of $Z/X$ (note that it is naturally pointed), the (homotopy) cofiber of the inclusion $X\hookrightarrow Z$:
\[{\rm H}^n(Z, X; \mathbf{M}):=\widetilde{{\rm H}}^n(Z/X; \mathbf{M})=[Z/X, \mathrm{K}(\mathbf{M}, n)]_{\mathbb{A}^1, *}.\]
So ${\rm H}^n(X, \varnothing; \mathbf{M})=\widetilde{{\rm H}}^n(X_+; \mathbf{M})={\rm H}^n(X; \mathbf{M})$ and $\widetilde{{\rm H}}^n(Y; \mathbf{M})={\rm H}^n(Y, *; \mathbf{M})$. All these are abelian groups.
\begin{remark}
	Here we are taking homotopy classes in the $\mathbb{A}^1$-homotopy category. But since we are working with strictly $\mathbb{A}^1$-invariant sheaves of abelian groups (which is the right coefficient sheaves for $\mathbb{A}^1$-homotopy theory), the homotopy classes equal those in Jardine's local model structure (no $\mathbb{A}^1$-localization involved). The latter works for arbitrary sheaves of abelian groups.
\end{remark}
The following result, familiar from classical topology, follows easily from the general abstract formalism of \cite[Chapter 6]{Hov}. The long exact sequence for cohomology of a pair follows from that for the triple $\varnothing\subset Y\subset Z$.
\begin{prop}[Long exact sequence for cohomology of a triple]\label{lesrel}
	Given simplicial presheaves $X\subset Y\subset Z$, we have the long exact sequence of cohomology groups
	\[
	0\to{\rm H}^0(Z, Y; \mathbf{M})\to{\rm H}^0(Z, X; \mathbf{M})\to{\rm H}^0(Y, X; \mathbf{M})\to{\rm H}^1(Z, Y; \mathbf{M})\to\cdots
	\]
	\[
	\cdots\to{\rm H}^n(Z, Y; \mathbf{M})\to{\rm H}^n(Z, X; \mathbf{M})\to{\rm H}^n(Y, X; \mathbf{M})\to{\rm H}^{n+1}(Z, Y; \mathbf{M})\to\cdots,
	\]
	natural in the triple $X\subset Y\subset Z$.
	
	In particular, there is the \emph{long exact sequence for cohomology of a pair} $(Z, Y)$:
	\[
	0\to{\rm H}^0(Z, Y; \mathbf{M})\to{\rm H}^0(Z; \mathbf{M})\to{\rm H}^0(Y; \mathbf{M})\to{\rm H}^1(Z, Y; \mathbf{M})\to\cdots
	\]
	\[
	\cdots\to{\rm H}^n(Z, Y; \mathbf{M})\to{\rm H}^n(Z; \mathbf{M})\to{\rm H}^n(Y; \mathbf{M})\to{\rm H}^{n+1}(Z, Y; \mathbf{M})\to\cdots.
	\]
\end{prop}
For a pointed simplicial presheaf $Y\in\sps(\EuScript{S}\mathsf{m}_k)_*$ and $n\geqslant 1$, we have the \emph{suspension homomorphism} \[\sigma: \widetilde{{\rm H}}^n(Y; \mathbf{M})=[Y, \mathrm{K}(\mathbf{M}, n)]_{\mathbb{A}^1, *}\to[\Omega Y, \Omega\mathrm{K}(\mathbf{M}, n)]_{\mathbb{A}^1, *}=\widetilde{{\rm H}}^{n-1}(\Omega Y; \mathbf{M})\]
induced by the (derived) loop functor $\Omega$; it is induced by the counit map $\Sigma\Omega Y\to Y$ (as $\widetilde{{\rm H}}^n(\Sigma\Omega Y; \mathbf{M})=\widetilde{{\rm H}}^{n-1}(\Omega Y; \mathbf{M})$). So
\[\sigma([\theta])=[\Omega\theta].\]

Recall also that in the category $\sps(\EuScript{S}\mathsf{m}_k)_*$ of \emph{pointed} simplicial presheaves on $\EuScript{S}\mathsf{m}_k$, we have the \emph{smash product} $\wedge$. The smash product $(X, x)\wedge (Y, y)$ is defined as the sectionwise smash product of simplicial sets $U\mapsto (X, x)(U)\wedge (Y, y)(U)$---smash product $(X, x)\wedge (Y, y)$ corepresents maps from $X\times Y$ that are base-point-preserving separately in each variable. So for $X, Y\in\sps(\EuScript{S}\mathsf{m}_k)$, we have $(X\times Y)_+=X_+\wedge Y_+$, and for any $K\in\sps(\EuScript{S}\mathsf{m}_k)$ we have $(X, x)\wedge K_+\cong (X\times K)/(x\times K)$ as pointed simplicial presheaves.

On the other hand, there is the \emph{pointed internal function complex} $\underline{\mathrm{Hom}}_*((X, x), (Y, y))\in\sps(\EuScript{S}\mathsf{m}_k)_*$, given by the equalizer diagram
\begin{equation}
\begin{tikzcd}
\underline{\mathrm{Hom}}_*((X, x), (Y, y))\ar[r] & \underline{\mathrm{Hom}}(X, Y)\ar[r,shift left=.4ex,"x^*"]
\ar[r,shift right=.4ex,swap,"y_*"] &  \underline{\mathrm{Hom}}(*, Y)=Y,
\end{tikzcd}
\end{equation}
where $\underline{\mathrm{Hom}}$ is the internal hom of simplicial presheaves, with respect to the cartesian closed structure on $\sps(\EuScript{S}\mathsf{m}_k)$ and $y_*$ is the composite $\underline{\mathrm{Hom}}(X, Y)\to*\xrightarrow{y}\underline{\mathrm{Hom}}(*, Y)=Y$. We have
\[\underline{\mathrm{Hom}}_*(A_+, (Y, y))\cong\underline{\mathrm{Hom}}(A, Y)\]
as (pointed) simplicial presheaves, for $A\in\sps(\EuScript{S}\mathsf{m}_k), (Y, y)\in\sps(\EuScript{S}\mathsf{m}_k)_*$. There is the following adjunction
\[(X, x)\wedge(-): \sps(\EuScript{S}\mathsf{m}_k)_*\rightleftarrows\sps(\EuScript{S}\mathsf{m}_k)_*: \underline{\mathrm{Hom}}_*((X, x), -).\]
It passes to a Quillen adjunction for the motivic model structure.

The operations $\wedge, \underline{\mathrm{Hom}}_*$ make $\sps(\EuScript{S}\mathsf{m}_k)_*$ a \emph{closed symmetric monoidal category} with unit $S^0=\partial\Delta^1=\Delta^0_+$ (viewed as a constant pointed simplicial presheaf). The \emph{simplicial suspension} is given by $\Sigma X:=S^1\wedge X$ and the \emph{simplicial looping} is given by $\Omega X:=\underline{\mathrm{Hom}}_*(S^1, X)$, where $S^1:=\Delta^1/\partial\Delta^1$ (viewed as a constant pointed simplicial presheaf).

\vspace{5mm}

Now let $X, Y, Z\in\sps(\EuScript{S}\mathsf{m}_k)_*$ be pointed simplicial presheaves, let
\[u: (X\times Y, *\times Y)\to(Z, *)\]
be a map of pairs, which can be identified with a pointed map $u: X\wedge Y_+\to Z$ (whenever we write $Y_+$, we forget the base point of $Y$ and add a new base point). Then we have the maps $Y_+\to\underline{\mathrm{Hom}}_*(X, Z)\to\underline{\mathrm{Hom}}_*(\Omega X, \Omega Z)$, which induces a map $\Omega X\wedge Y_+\to \Omega Z$ by adjunction, hence a map of pairs
\[v: (\Omega X\times Y, *\times Y)\to(\Omega Z, *).\]

Since $*\times Y$ is a deformation retract of ${\rm P}X\times Y$, by the long exact sequence for cohomology of the pair $({\rm P}X\times Y, *\times Y)$, we see that ${\rm H}^*({\rm P}X\times Y, *\times Y; \mathbf{M})=0$. So the connecting homomorphism 
\[\delta: {\rm H}^q(\Omega X\times Y, *\times Y; \mathbf{M})\to{\rm H}^{q+1}({\rm P}X\times Y, \Omega X\times Y; \mathbf{M})\]
for the triple $({\rm P}X\times Y, \Omega X\times Y, *\times Y)$ is an isomorphism (by the long exact sequence for cohomology of a triple above).

The given map $u$ also induces a map of triples
\[U: ({\rm P}X\times Y, \Omega X\times Y, *\times Y)\to({\rm P}Z, \Omega Z, *)\]
which fits into a commutative diagram
\[\xymatrix{
	({\rm P}X\times Y, \Omega X\times Y, *\times Y)  \ar[r]^-{U}  \ar[d]_-{p_X\times{\rm id}_Y}  & ({\rm P}Z, \Omega Z, *) \ar[d]^-{p_Z}\\
	(X\times Y, *\times Y, *\times Y)  \ar[r]^-{u} &  (Z, *, *),}\]
where $p=p_X: {\rm P}X\to X$ denotes the ``path fibration''.
Naturality of the connecting homomorphism for $U$ gives $\delta v^*=U^*\delta$, or $v^*\delta^{-1}=\delta^{-1}U^*$. So $v^*\delta^{-1}p^*=\delta^{-1}U^*p^*=\delta^{-1}(p\times{\rm id}_Y)^*u^*$. Define
\[\rho: {\rm H}^{q+1}(X\times Y, *\times Y; \mathbf{M})\to{\rm H}^q(\Omega X\times Y, *\times Y; \mathbf{M})\]
to be the map
\[[X\wedge Y_+, \mathrm{K}(\mathbf{M}, q+1)]_{\mathbb{A}^1, *}=[Y_+, \underline{\mathrm{Hom}}_*(X, \mathrm{K}(\mathbf{M}, q+1))]_{\mathbb{A}^1, *}\]
\[\to[Y_+, \underline{\mathrm{Hom}}_*(\Omega X, \Omega\mathrm{K}(\mathbf{M}, q+1))]_{\mathbb{A}^1, *}
=[\Omega X\wedge Y_+, \mathrm{K}(\mathbf{M}, q)]_{\mathbb{A}^1, *}\]
induced by the simplicial looping. Note that $X\times Y/*\times Y=X\wedge Y_+$ and ${\rm H}^{q+1}((X\times Y)/(*\times Y); \mathbf{M})=[(X\times Y)/(*\times Y), \mathrm{K}(\mathbf{M}, q+1)]_{\mathbb{A}^1, *}=[X\wedge Y_+, \mathrm{K}(\mathbf{M}, q+1)]_{\mathbb{A}^1, *}$.

Now let $\iota_1: (X\times Y, \varnothing)\to(X\times Y, *\times Y)$ and $\iota_2: (\Omega X\times Y, \varnothing)\to(\Omega X\times Y, *\times Y)$ be the inclusions of pairs; smashing the cofiber sequence $S^0\to X_+\to X$ with $Y_+$ we get a cofiber sequence $Y_+=S^0\wedge Y_+\to X_+\wedge Y_+\xrightarrow{\iota_1} X\wedge Y_+$ and hence also a cofiber sequence $X_+\wedge Y_+\xrightarrow{\iota_1} X\wedge Y_+\to S^1\wedge Y_+=\Sigma (Y_+)$; we obtain the induced map
\[\iota_1^*: {\rm H}^{q+1}(X\times Y, *\times Y; \mathbf{M})\to{\rm H}^{q+1}(X\times Y; \mathbf{M}).\]
Clearly, the inclusion $*\times Y\hookrightarrow X\times Y$ splits (by the projection), and $\iota_1^*$ is an injection (by \Cref{lesrel}).

Similarly, we have the induced map
\[\iota_2^*: {\rm H}^{q}(\Omega X\times Y, *\times Y; \mathbf{M})\to{\rm H}^{q}(\Omega X\times Y; \mathbf{M}),\]
which is an injection as well.

We obtain the following commutative diagram:
\begin{equation}\label{relcoholoop}
\begin{tikzcd}
\widetilde{{\rm H}}^{q+1}(Z; \mathbf{M}) \arrow[r, "u^*"] \arrow[d, "\sigma"'] & {\rm H}^{q+1}(X\times Y, *\times Y; \mathbf{M})  \arrow[d, "\rho"] \arrow[r, "\iota_1^*"] & {\rm im}(\iota_1^*) \arrow[d, "\sigma'"] \ar[r, hook] & {\rm H}^{q+1}(X\times Y; \mathbf{M}) \\
\widetilde{{\rm H}}^q(\Omega Z; \mathbf{M}) \arrow[r, "v^*"]          & {\rm H}^q(\Omega X\times Y, *\times Y; \mathbf{M}) \arrow[r, "\iota_2^*"] &  {\rm im}(\iota_2^*)\ar[r, hook] & {\rm H}^{q}(\Omega X\times Y; \mathbf{M}),
\end{tikzcd}
\end{equation}
where the \emph{partial suspension homomorphism} $\sigma': {\rm im}(\iota_1^*)\to{\rm im}(\iota_2^*)$ is given by $\iota_1^*(x)\mapsto\iota_2^*\rho(x)$ (it's well-defined, since $\iota_1^*$ is an injective); commutativity of the square on the left follows from naturality of our construction.

Assume $\mathbf{K}=\displaystyle\bigoplus_{i\in\mathbb{Z}}\mathbf{K}_i$ is a graded sheaf of rings, each $\mathbf{K}_i$ being a strictly $\mathbb{A}^1$-invariant sheaf of abelian groups. Then we have the following commutative diagram:
\begin{equation}\label{sus-ext-prod}
\begin{tikzcd}
\widetilde{{\rm H}}^{r+1}(X; \mathbf{K}_i)\otimes{\rm H}^{s}(Y; \mathbf{K}_j) \arrow[r, "\mu"] \arrow[d, "\sigma\otimes 1"'] & {\rm H}^{r+1+s}(X\times Y, *\times Y; \mathbf{K}_{i+j})  \arrow[d, "\rho"] \\
\widetilde{{\rm H}}^r(\Omega X; \mathbf{K}_i)\otimes{\rm H}^{s}(Y; \mathbf{K}_j) \arrow[r, "\mu"']          & {\rm H}^{r+s}(\Omega X\times Y, *\times Y; \mathbf{K}_{i+j}).
\end{tikzcd}
\end{equation}
Here the top arrow $\mu$ is the \emph{external product}
\[[X, \mathrm{K}(\mathbf{K}_i, r+1)]_{\mathbb{A}^1, *}\otimes[Y, \mathrm{K}(\mathbf{K}_j, s)]_{\mathbb{A}^1}\to[X\times Y/*\times Y, \mathrm{K}(\mathbf{K}_{i+j}, r+1+s)]_{\mathbb{A}^1, *}\]
induced by the multiplication in the graded sheaves of rings $\mathbf{K}$ (for more details, see \cite[\S 8.4]{Jar15}, where it is called \emph{cup product pairing}; Jardine's assumption of commutativity of the coefficient sheaf of rings $\mathbf{K}$ is not essential). We denote $a\times b:=\mu(a\otimes b)$. Then by definition (when $X=Y$), the \emph{cup product} is given by $a\cdot b=a\smile b:=\Delta^*\mu(a\otimes b)=\Delta^*(a\times b)$, where $\Delta: X\to X\times X$ is the diagonal. With cup product, the cohomology ${\rm H}^{*}(X; \mathbf{K})$ becomes a ring (bigraded if $\mathbf{K}$ is a graded sheaf of rings), which may not be anti-commutative (depending on the commutativity property of $\mathbf{K}$). The commutativity of the diagram of diagram (\ref{sus-ext-prod}) means that $\rho(a\times b)=(\sigma a)\times b$ for $a\in\widetilde{{\rm H}}^{*}(X; \mathbf{K}), b\in{\rm H}^{*}(Y; \mathbf{K})$ ($\sigma$ is the suspension homomorphism for $X$), which follows from our alternative definition of $\rho$ (and in fact, we may totally discard the first definition of $\rho$; we give it only to respect the work of James-Thomas \cite{JTh65}).

Similarly, we have $\sigma'(a\times b)=(\sigma a)\times b$, for $a\in\widetilde{{\rm H}}^{*}(X; \mathbf{K}), b\in\widetilde{{\rm H}}^{*}(Y; \mathbf{K})$ in diagram (\ref{relcoholoop}) (here $\sigma$ is the suspension homomorphism for $X$).

\section{$\mathbb{A}^1$-homotopy study of vector bundles}\label{hogpfibs}

We first state the following affine representability results in $\mathbb{A}^1$-homotopy theory, which is (a special case of) \cite[Theorem 5.2.3]{AHW1}, where we use $\EuScript{V}_r(U)$ to denote the set of isomorphism classes of rank $r$ vector bundles over a scheme $U$ and $\EuScript{S}\mathsf{m}_k^{\operatorname{aff}}$ denotes the category of smooth affine $k$-schemes. By $\operatorname{Gr}_{r}$, we mean the ind-scheme of all finite Grassmannians $\operatorname{Gr}_{r, n}, n>r$; $\mathrm{BGL}_{r}$ is the simplicial classifying space for the linear $k$-group scheme $\mathrm{GL}_{r}$.

For $U\in\EuScript{S}\mathsf{m}_k^{\operatorname{aff}}$, there are canonical maps $\operatorname{Hom}(U, \operatorname{Gr}_{r})\to\EuScript{V}_r(U)$ given by pulling back the tautological rank-$r$ vector bundle and $\operatorname{Hom}(U, \operatorname{Gr}_{r})\to[U, \operatorname{Gr}_{r}]_{\mathbb{A}^1}$ (mapping a morphism to its $\mathbb{A}^1$-homotopy class), both are surjective.
\begin{prop}[Affine representability for vector bundles]
	Let $k$ be a perfect field, then there is a natural isomorphism of functors
	\[\EuScript{V}_r(-)\cong[-, \operatorname{Gr}_{r}]_{\mathbb{A}^1}: (\EuScript{S}\mathsf{m}_k^{\operatorname{aff}})^{\mathrm{op}}\to\set\]
	such that for every $U\in\EuScript{S}\mathsf{m}_k^{\operatorname{aff}}$, the following commutative diagram
	\[\xymatrix{
		\operatorname{Hom}(U, \operatorname{Gr}_{r}) \ar[rd]  \ar[d] \\
		\EuScript{V}_r(U)   \ar[r]^{\cong\ \ \ }  &  [U, \operatorname{Gr}_{r}]_{\mathbb{A}^1},}\]
	where the other two arrows are the canonical ones.
\end{prop}
As there is an $\mathbb{A}^1$-weak equivalence $\mathrm{BGL}_{r}\simeq\operatorname{Gr}_{r}$, we can use $\mathrm{BGL}_{r}$ in place of $\operatorname{Gr}_{r}$.

We have the following $\mathbb{A}^1$-homotopy fiber sequence (see \cite[\S 8.2]{Mor}):
\[\mathbb{A}^{n+1}\setminus{0}\to\mathrm{BGL}_{n}\to\mathrm{BGL}_{n+1}.\]
From this and the computation of the first few $\mathbb{A}^1$-homotopy sheaves of the motivic sphere $\mathbb{A}^{n+1}\setminus 0$ (see \cite[Corollary 6.43]{Mor})
\[\pi_i^{\mathbb{A}^1}(\mathbb{A}^{n+1}\setminus 0)=\begin{cases}
0,  & i<n, \\
\mathbf{K}^{\mathrm{MW}}_{n+1},  & i=n,
\end{cases}\]
we see that the induced map of $\mathbb{A}^1$-homotopy sheaves \[\pi_j^{\mathbb{A}^1}\mathrm{BGL}_{n}\rightarrow\pi_j^{\mathbb{A}^1}\mathrm{BGL}_{n+1}\]
is an isomorphism if $j<n$ and a surjection if $j=n$.

Thus also,  the induced map \[\pi_j^{\mathbb{A}^1}\mathrm{BGL}_{n}\rightarrow\pi_j^{\mathbb{A}^1}\mathrm{BGL}\]
is an isomorphism if $j<n$ and an epimorphism if $j=n$.

Let $F_n$ be the $\mathbb{A}^1$-homotopy fiber of the canonical map $\varphi_n: \mathrm{BGL}_{n}\rightarrow\mathrm{BGL}$, so we have an $\mathbb{A}^1$-homotopy fiber sequence
\[F_n\to\mathrm{BGL}_{n}\to\mathrm{BGL},\]
and from the above results, we see that 
\[\pi_j^{\mathbb{A}^1}F_n=0, j<n.\]

We now compute the next $\mathbb{A}^1$-homotopy sheaf of $F_n$. By \cite[\S 6.1-\S 6.5]{Hov}, the following diagram
\[\xymatrix{
	\mathbb{A}^{n+1}\setminus{0} \ar[r]^-{i} \ar@{=}[d] & F_{n}  \ar[r] \ar[d] & F_{n+1} \ar[d]\\
	\mathbb{A}^{n+1}\setminus{0} \ar[r] & \mathrm{BGL}_{n} \ar[r] \ar[d] & \mathrm{BGL}_{n+1} \ar[d]\\
	& \mathrm{BGL} \ar@{=}[r] & \mathrm{BGL}}\]
commutes, where the four 3-term rows and columns are $\mathbb{A}^1$-homotopy fiber sequences. From the $\mathbb{A}^1$-homotopy fiber sequence in the first row we get an exact sequence \[\pi_{n+1}^{\mathbb{A}^1}F_{n+1}\rightarrow\pi_n^{\mathbb{A}^1}(\mathbb{A}^{n+1}\setminus 0)\to\pi_n^{\mathbb{A}^1}F_n\to\pi_n^{\mathbb{A}^1}F_{n+1}=0,\]
so the map
\[\mathbf{K}^{\mathrm{MW}}_{n+1}=\pi_n^{\mathbb{A}^1}(\mathbb{A}^{n+1}\setminus 0)\to\pi_n^{\mathbb{A}^1}F_n\]
and hence also
\[\pi_{n+1}^{\mathbb{A}^1}(\mathbb{A}^{n+2}\setminus 0)\to\pi_{n+1}^{\mathbb{A}^1}F_{n+1}\]
are epimorphisms (see \cite[\S 3.2]{Mor} for a thorough treatment of the sheaves $\mathbf{K}^{\mathrm{MW}}_*$ and $\mathbf{K}^{\mathrm{M}}_*$). And so
\[\pi_n^{\mathbb{A}^1}F_n\cong{\rm coker}(\pi_{n+1}^{\mathbb{A}^1}F_{n+1}\rightarrow\pi_n^{\mathbb{A}^1}(\mathbb{A}^{n+1}\setminus 0))\cong{\rm coker}(\pi_{n+1}^{\mathbb{A}^1}(\mathbb{A}^{n+2}\setminus 0)\rightarrow\pi_n^{\mathbb{A}^1}(\mathbb{A}^{n+1}\setminus 0)).\]

We claim that the composite $\pi_{n+1}^{\mathbb{A}^1}(\mathbb{A}^{n+2}\setminus 0)\to\pi_{n+1}^{\mathbb{A}^1}F_{n+1}\to\pi_n^{\mathbb{A}^1}(\mathbb{A}^{n+1}\setminus 0)$ is the map
\[\mathbf{K}^{\mathrm{MW}}_{n+2}\to\mathbf{K}^{\mathrm{MW}}_{n+1}\]
discussed in \cite[Lemma 3.5]{AF14a}. To see this, using the naturality of the connecting homomorphism of the upper right square of the previous diagram and its upper left square with $n$ replaced by $n+1$, we obtain a commutative diagram
\[
\xymatrix{
	\pi_{n+1}^{\mathbb{A}^1}(\mathbb{A}^{n+2}\setminus 0) \ar[r]^-{i_*} \ar@{=}[d] & \pi_{n+1}^{\mathbb{A}^1}(F_{n+1})  \ar[r] \ar[d] & \pi_n^{\mathbb{A}^1}(\mathbb{A}^{n+1}\setminus 0) \ar@{=}[d]\\
	\pi_{n+1}^{\mathbb{A}^1}(\mathbb{A}^{n+2}\setminus 0) \ar[r]^-{\delta_{n+1}} & \pi_n^{\mathbb{A}^1}({\rm SL}_{n+1}) \ar[r]^-{q_n}  & \pi_n^{\mathbb{A}^1}(\mathbb{A}^{n+1}\setminus 0),}
\]
where $q_n$ and $\delta_{n+1}$ are the maps in \cite[Lemma 3.5]{AF14a}, proving the claim. Thus that composite map is $0$ if $n$ is even and is multiplication by $\eta$ if $n$ is odd; so
\begin{equation*}
{\rm im}(
\pi_{n+1}^{\mathbb{A}^1}F_{n+1}\rightarrow\pi_n^{\mathbb{A}^1}(\mathbb{A}^{n+1}\setminus 0)) \cong 
\begin{cases}
0,  & n\ \text{even}; \\
\eta\mathbf{K}^{\mathrm{MW}}_{n+2},  & n\ \text{odd}.
\end{cases}
\end{equation*}
Thus
\begin{equation*}
\pi_n^{\mathbb{A}^1}F_n\cong{\rm coker}(\mathbf{K}^{\mathrm{MW}}_{n+2}\to\mathbf{K}^{\mathrm{MW}}_{n+1}) \cong \begin{cases}
\mathbf{K}^{\mathrm{MW}}_{n+1},  & n\ \text{even}; \\
\mathbf{K}^{\mathrm{MW}}_{n+1}/\eta\mathbf{K}^{\mathrm{MW}}_{n+2}=\mathbf{K}^{\mathrm{M}}_{n+1},  & n\ \text{odd}.
\end{cases}
\end{equation*}
Moreover, by the $\mathbb{A}^1$-homotopy fiber sequence $\mathbb{A}^{n+1}\setminus{0}\to F_{n}\to F_{n+1}$ we get exact sequences \[\pi_{n+1}^{\mathbb{A}^1}(\mathbb{A}^{n+1}\setminus 0)\to \pi_{n+1}^{\mathbb{A}^1}F_n\to\mathbf{K}^{\mathrm{M}}_{n+2}\xrightarrow{0}\pi_n^{\mathbb{A}^1}(\mathbb{A}^{n+1}\setminus 0), n\ \text{even},\]
\[\pi_{n+1}^{\mathbb{A}^1}(\mathbb{A}^{n+1}\setminus 0)\to \pi_{n+1}^{\mathbb{A}^1}F_n\to\mathbf{K}^{\mathrm{MW}}_{n+2}\xrightarrow{\eta}\mathbf{K}^{\mathrm{MW}}_{n+1}, n\ \text{odd},\]
and since $\ker(\mathbf{K}^{\mathrm{MW}}_{n+2}\xrightarrow{\eta}\mathbf{K}^{\mathrm{MW}}_{n+1})\cong2\mathbf{K}^{\mathrm{M}}_{n+2}$ (see e.g. \cite[\S 2.4]{AF14a} especially \cite[Proposition 2.6]{AF14a} for a discussion; it relies on Voevodsky's confirmation of the Milnor conjecture \cite{OVV}), we get exact sequences
\begin{equation}
\begin{cases}
\pi_{n+1}^{\mathbb{A}^1}(\mathbb{A}^{n+1}\setminus 0)\to \pi_{n+1}^{\mathbb{A}^1}F_n\to\mathbf{K}^{\mathrm{M}}_{n+2}\to0,  & n\ \text{even}; \\
\pi_{n+1}^{\mathbb{A}^1}(\mathbb{A}^{n+1}\setminus 0)\to \pi_{n+1}^{\mathbb{A}^1}F_n\to2\mathbf{K}^{\mathrm{M}}_{n+2}\to0,  & n\ \text{odd}.
\end{cases}
\end{equation}

As $\pi_j^{\mathbb{A}^1}F_n=0$ for $j<n$, by the Moore-Postnikov decomposition in $\mathbb{A}^1$-homotopy theory stated in \Cref{MotMP}, we can factorize the map $\mathrm{BGL}_{n}\rightarrow\mathrm{BGL}$ as $\mathrm{BGL}_{n}\to E=E_n\to\mathrm{BGL}$ and since $\pi_1^{\mathbb{A}^1}\mathrm{BGL}=\mathbb{G}_m$, the map $E=E_n\to\mathrm{BGL}$ fits into a homotopy cartesian diagram
\[\xymatrix{
	E=E_n  \ar[r]  \ar[d]  & \mathrm{B}\mathbb{G}_m \ar[d]\\
	\mathrm{BGL}=E_{n-1}  \ar[r]^-{k_{n+1}} &  \mathrm{K}^{\mathbb{G}_m}(\pi_{n}^{\mathbb{A}^1}F_n, n+1),}\]
for a unique $[k_{n+1}]\in[E_{n-1}, \mathrm{K}^{\mathbb{G}_m}(\pi_{n}^{\mathbb{A}^1}F_n, n+1)]_{\mathbb{A}^1}$, if $n\geqslant 2$. For any $j\leqslant n$, we have $\pi_j^{\mathbb{A}^1}\mathrm{BGL}_{n}\cong\pi_j^{\mathbb{A}^1}E_{n}$.

If $n\geqslant 3$ is odd, then $\mathbb{G}_m$ acts trivially on $\pi_{n}^{\mathbb{A}^1}F_n=\mathbf{K}^{\mathrm{M}}_{n+1}$. Indeed, by the paragraph in \cite[pp. 1056-1057]{AF15}, this action is through the morphism 
\[\mathbb{G}_m\to(\mathbf{K}^{\mathrm{MW}}_0)^{\times}, u\mapsto\langle u\rangle: =1+\eta[u]\]
then the multiplication $\mathbf{K}^{\mathrm{MW}}_0\times\mathbf{K}^{\mathrm{MW}}_{n+1}\to\mathbf{K}^{\mathrm{MW}}_{n+1}$ and the quotient $\mathbf{K}^{\mathrm{MW}}_{n+1}\to\mathbf{K}^{\mathrm{MW}}_{n+1}/\eta\mathbf{K}^{\mathrm{MW}}_{n+2}=\mathbf{K}^{\mathrm{M}}_{n+1}$; but $\eta\mathbf{K}^{\mathrm{MW}}_{n+2}$ is mapped to $0$ in $\mathbf{K}^{\mathrm{M}}_{n+1}$.

So the above homotopy cartesian diagram reduces to an $\mathbb{A}^1$-homotopy fiber sequence
\begin{equation}
E=E_n\to\mathrm{BGL}=E_{n-1}\xrightarrow{\theta}\mathrm{K}(\mathbf{K}^{\mathrm{M}}_{n+1}, n+1).
\end{equation}
whence a principal $\mathbb{A}^1$-homotopy fiber sequence
\[\mathrm{K}(\mathbf{K}^{\mathrm{M}}_{n+1}, n)\to E=E_n\to\mathrm{BGL}=E_{n-1}.\]
The $\mathbb{A}^1$-homotopy class of the map $\theta$ is the universal $(n+1)$-st Chern class $c_{n+1}\in{\rm H}^{n+1}(\mathrm{BGL}; \mathbf{K}^{\mathrm{M}}_{n+1})={\rm CH}^{n+1}(\mathrm{BGL})$ (see \cite[Example 5.2 and Proposition 5.8]{AF16a}).

The following result is easily proven by the $\mathbb{A}^1$-homotopy long exact sequence and crawling up the Moore-Postnikov tower of the map $\mathrm{BGL}_{n}\rightarrow\mathrm{BGL}$ (with essentially the same proof as \cite[Proposition 6.2]{AF14}).
\begin{prop}\label{towiso}
	Let $k$ be a perfect field, $A$ a smooth affine $k$-algebra of Krull dimension $d\geqslant 3$, and $X={\rm Spec}(A)$. Then the map
	\[[X, \mathrm{BGL}_{n}]_{\mathbb{A}^1}\to[X, E]_{\mathbb{A}^1}\]
	is surjective if $n\geqslant d-1$, and is bijective if $n\geqslant d$.
\end{prop}

\section{Motivic approach to enumerating non-stable vector bundles}

In this section, we develop a motivic homotopy theoretic approach to the enumeration problem for non-stable vector bundles, following the ideas of I. M. James and E. Thomas \cite{JTh65} in the classical homotopy theoretic setting.

Again we consider the algebro-geometric situation: $k$ is a perfect field, $A$ a smooth affine $k$-algebra of Krull dimension $d\geqslant 3$, and $X={\rm Spec}(A)$. Recall that we have the $\mathbb{A}^1$-homotopy fiber sequence
\[F_n\to\mathrm{BGL}_{n}\xrightarrow{\varphi=\varphi_n}\mathrm{BGL}.\]
The problem is to study the induced map on $\mathbb{A}^1$-homotopy classes
\[\varphi_*: [X, \mathrm{BGL}_{n}]_{\mathbb{A}^1}\to[X, \mathrm{BGL}]_{\mathbb{A}^1}.\]

\begin{definition}
	Let $K\in\sps(\EuScript{S}\mathsf{m}_k)_*$ be a pointed simplicial presheaf on the Nisnevich site $(\EuScript{S}\mathsf{m}_k, \mathrm{Nis})$. We define its \emph{free path space} ${\rm P}^*K:=K^{\Delta^1}$ and define its \emph{free loop space} $\mathcal{L}K\subset{\rm P}^*K$ by the equalizer diagram
	\begin{equation*}
	\begin{tikzcd}
	\mathcal{L}K\ar[r] & {\rm P}^*K=K^{\Delta^1}\ar[r,shift left=.4ex,"(d^1)^*"]
	\ar[r,shift right=.4ex,swap,"(d^0)^*"] &  K^{\Delta^0}=K,
	\end{tikzcd}
	\end{equation*}
	where the two parallel arrows are induced by the coface maps $d^1, d^0: \Delta^0\rightrightarrows\Delta^1$.
\end{definition}
Denote by
\[r: \mathcal{L}K\to K\]
the composite map in the above equalizer diagram; it is easy to see that we have a cartesian diagram
\[\begin{tikzcd}
	\mathcal{L}K \arrow[r] \arrow[d, "r"']\arrow[dr, phantom, "\pb", near start]  & {\rm P}^*K=K^{\Delta^1} \arrow[d, "{((d^1)^*, (d^0)^*)}"] \\
	K \arrow[r, "\delta_K"]          & {K\times K=K^{\partial\Delta^1}},
\end{tikzcd}\]
where $\delta_K: K\to K\times K$ is the diagonal map (we can use this cartesian diagram to define $\mathcal{L}K$ and $r: \mathcal{L}K\to K$). If $K$ is a sectionwise Kan complex, then the right vertical map is a sectionwise Kan fibration (since it is induced by the inclusion $\partial\Delta^1\hookrightarrow\Delta^1$; cf. \cite[Proposition 9.3.9]{Hir}), so the map $r: \mathcal{L}K\to K$ is also a sectionwise Kan fibration.

The map $(s^0)^*: K^{\Delta^0}=K\to{\rm P}^*K=K^{\Delta^1}$ defines a map $c: K\to\mathcal{L}K$ which is a section of the map $r$ (as $s^0d^0={\rm id}=s^0d^1$, by the cosimplicial identities).

The usual (based) \emph{loop space} $\Omega K\subset\mathcal{L}K$ fits into the cartesian square
\begin{equation*}
\begin{tikzcd}
\Omega K \arrow[r, "i"] \arrow[d]\arrow[dr, phantom, "\pb", near start]  & \mathcal{L}K \arrow[d, "r"] \\
* \arrow[r]          & K,
\end{tikzcd}
\end{equation*}
where the bottom arrow is the base point of $K$. Thus there is a sectionwise homotopy fiber sequence
\[\Omega K\xrightarrow{i}\mathcal{L} K\xrightarrow{r}K.\]

Now consider the following situation: we are given a \emph{strictly $\mathbb{A}^1$-invariant} sheaf of abelian groups $\mathbf{M}$ on $(\EuScript{S}\mathsf{m}_k, \mathrm{Nis})$ and a principal $\mathbb{A}^1$-homotopy fiber sequence
\[\mathrm{K}(\mathbf{M}, n)\to E\xrightarrow{q}\mathrm{BGL}\]
classified by a map $\theta: \mathrm{BGL}\to\mathrm{K}(\mathbf{M}, n+1), n\geqslant 2$. So the above $\mathbb{A}^1$-homotopy fiber sequence extends one step to the right (so that each 3-term forms an $\mathbb{A}^1$-homotopy fiber sequence):
\begin{equation}\label{1stfibseq}
\mathrm{K}(\mathbf{M}, n)\to E\xrightarrow{q}\mathrm{BGL}\xrightarrow{\theta}\mathrm{K}(\mathbf{M}, n+1).
\end{equation}

Moreover, by \cite[Theorem 2.2.5]{AHW2} or \cite[Lemma 3.1.3]{AFHo19}, we have an $\mathbb{A}^1$-homotopy fiber sequence
\[\mathrm{K}(\mathbf{M}, n)\xrightarrow{i} \mathcal{L}\mathrm{K}(\mathbf{M}, n+1)\xrightarrow{r}\mathrm{K}(\mathbf{M}, n+1).\]

Later we will specialize to the case of $\mathbf{M}$ being the typical and naturally-arising strictly $\mathbb{A}^1$-invariant sheaves like $\mathbf{K}^{\mathrm{M}}_{n+1}, \mathbf{K}^{\mathrm{MW}}_{n+1}$ to give enumeration results for non-stable vector bundles.

\begin{prop}\label{loopinjfreeloop}
	For $n\geqslant 1$, the induced map
	\[i_*: {\rm H}^n(X; \mathbf{M})=[X, \Omega\mathrm{K}(\mathbf{M}, n+1)]_{\mathbb{A}^1}\to[X, \mathcal{L}\mathrm{K}(\mathbf{M}, n+1)]_{\mathbb{A}^1}\]
	is injective. If moreover $\dim X=d\leqslant n$, then ${\rm H}^{n+1}(X; {\bf M})=[X, \mathrm{K}(\mathbf{M}, n+1)]_{\mathbb{A}^1}=0$ and hence $i_*: {\rm H}^n(X; \mathbf{M})=[X, \Omega\mathrm{K}(\mathbf{M}, n+1)]_{\mathbb{A}^1}\to[X, \mathcal{L}\mathrm{K}(\mathbf{M}, n+1)]_{\mathbb{A}^1}$ is an isomorphism.
\end{prop}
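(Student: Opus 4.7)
The plan is to exploit the fact that $\mathrm{K}(\mathbf{M}, n+1)$ is an infinite loop space, hence an abelian group object in $\EuScript{H}_*^{\mathbb{A}^1}(k)$, and to use this to split the $\mathbb{A}^1$-fibre sequence $\Omega\mathrm{K}(\mathbf{M},n+1)\to\Omega^*\mathrm{K}(\mathbf{M},n+1)\to\mathrm{K}(\mathbf{M},n+1)$ as an $\mathbb{A}^1$-weak product. Once that splitting is in hand, the first claim is immediate and the second reduces to a standard vanishing of Nisnevich cohomology above the Krull dimension.

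Write $K := \mathrm{K}(\mathbf{M}, n+1)$ for brevity. The abelian group structure on $K$ extends pointwise to one on $\Omega^* K = K^{\Delta^1}$, with respect to which the fibre inclusion $i: \Omega K \to \Omega^* K$ and the section $c: K \to \Omega^* K$ are both pointed maps. Form
\[
\varphi: \Omega K \times K \longrightarrow \Omega^* K, \qquad (\alpha, k) \mapsto i(\alpha) + c(k).
\]
Since $r \circ i = *$ and $r \circ c = \mathrm{id}_K$, the map $\varphi$ fits as the middle vertical in a morphism of $\mathbb{A}^1$-fibre sequences from the trivial product fibration $\Omega K \to \Omega K \times K \to K$ to the given fibration $\Omega K \xrightarrow{i} \Omega^* K \xrightarrow{r} K$, with the identity on both outer terms. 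By the naturality of the long exact sequence of \Cref{motlesfib} and the five-lemma applied to sheaves of $\mathbb{A}^1$-homotopy groups, $\varphi$ is an $\mathbb{A}^1$-weak equivalence. Applying $[X, -]_{\mathbb{A}^1}$ then yields
\[
[X, \Omega^* K]_{\mathbb{A}^1} \;\cong\; \hh^n(X; \mathbf{M}) \,\oplus\, \hh^{n+1}(X; \mathbf{M}),
\]
under which $i_*$ corresponds to the inclusion of the first summand; in particular $i_*$ is injective.

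If moreover $\dim X = d \leqslant n$, then $\hh^{n+1}(X; \mathbf{M}) = 0$ by the standard vanishing of Nisnevich cohomology of a smooth $k$-scheme with coefficients in a strictly $\mathbb{A}^1$-invariant sheaf above the Krull dimension: Morel's Rost--Schmid (equivalently Gersten) complex computing $\hh^*(X; \mathbf{M})$ is concentrated in degrees $0, \ldots, d$. Plugging this vanishing into the direct sum decomposition above forces $i_*$ to be a bijection as well.

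The only substantive point is the construction of the splitting $\varphi$, and this reduces purely to the fact that $K$ is an abelian group object in $\EuScript{H}_*^{\mathbb{A}^1}(k)$ together with $r$ admitting the section $c$ coming from constant loops; both are formal for Eilenberg--Mac Lane objects attached to a strictly $\mathbb{A}^1$-invariant sheaf, so I do not expect any serious obstacle.
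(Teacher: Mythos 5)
Your proof is correct, and it takes a genuinely different route from the paper's. The paper proves injectivity by taking sections over $X$, applying the $\mathbb{A}^1$-fibrant replacement functor $\mathrm{L}_{\mathbb{A}^1}$, and then invoking the classical result \cite[Theorem 2.6]{JTh65} for the resulting simplicial set $\mathrm{L}_{\mathbb{A}^1}\mathrm{K}(\mathbf{M}, n+1)(X)$ — the same ``reduce to James--Thomas via $\mathrm{L}_{\mathbb{A}^1}$'' strategy used for several adjacent results in \S 5. You instead work directly in $\sps(\sm_k)_*$: because $K := \mathrm{K}(\mathbf{M}, n+1)$ is a sheaf of simplicial abelian groups, both $i$ and $c$ are homomorphisms, so $\varphi(\alpha, k) = i(\alpha) + c(k)$ is a strict morphism of $\mathbb{A}^1$-fiber sequences from the trivial product to $\Omega K \xrightarrow{i} \Omega^* K \xrightarrow{r} K$, and the five-lemma on $\mathbb{A}^1$-homotopy sheaves makes $\varphi$ an $\mathbb{A}^1$-weak equivalence; injectivity of $i_*$ and, once $\hh^{n+1}(X; \mathbf{M}) = 0$ by the Rost--Schmid complex in degrees $> d$, bijectivity both drop out. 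Interestingly, your $\varphi$ is precisely the map $a'$ that the paper introduces a few paragraphs later (and observes to be an isomorphism ``by the degree-wise split fiber sequence''), so you are in effect bringing forward and making explicit a splitting the paper already uses — a more self-contained argument that avoids the external citation, at the cost of not transparently generalizing to the $m'$-splitting for $\mathrm{BGL}$, which is only an abelian group object in $\EuScript{H}^{\mathbb{A}^1}_*$ and not a sheaf of simplicial abelian groups on the nose. One small slip: $\Omega^* K$ is the equalizer inside $K^{\Delta^1}$, not equal to $K^{\Delta^1}$; but it is a simplicial subgroup, so your use of the inherited group structure is fine.
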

\begin{proof}
	The abelian group ${\rm H}^n(X; \mathbf{M})=[X, \Omega\mathrm{K}(\mathbf{M}, n+1)]_{\mathbb{A}^1}\cong[X, \Omega\mathrm{L}_{\mathbb{A}^1}\mathrm{K}(\mathbf{M}, n+1)]_{\mathbb{A}^1}$ is the same as the set of \emph{simplicial} homotopy classes of maps $X\to\Omega\mathrm{L}_{\mathbb{A}^1}\mathrm{K}(\mathbf{M}, n+1)$ by the strict $\mathbb{A}^1$-invariance assumption of $\mathbf{M}$ (which ensures that $\mathrm{K}(\mathbf{M}, n)$ is $\mathbb{A}^1$-local for all $n\in\mathbb{N}$), which is in turn $\pi_{0}(\Omega\mathrm{L}_{\mathbb{A}^1}\mathrm{K}(\mathbf{M}, n+1)(X))=[\Delta^0_+, \Omega\mathrm{L}_{\mathbb{A}^1}\mathrm{K}(\mathbf{M}, n+1)(X)]_{\sset_*}$. Note that all the constructions interplay well with the $\mathbb{A}^1$-fibrant replacement functor $\mathrm{L}_{\mathbb{A}^1}$, so the result follows from the classical result \cite[Theorem 2.6]{JTh65} (applying it to $A=S^0$ with $X$ replaced by the geometric realization of our $\mathrm{L}_{\mathbb{A}^1}\mathrm{K}(\mathbf{M}, n+1)(X)$).
	
	The last statement follows easily from the $\mathbb{A}^1$-homotopy fiber sequence
	\[\mathrm{K}(\mathbf{M}, n)\xrightarrow{i} \mathcal{L}\mathrm{K}(\mathbf{M}, n+1)\xrightarrow{r}\mathrm{K}(\mathbf{M}, n+1).\]
\end{proof}
\begin{remark}
	In \cite{JTh65}, the authors work with \emph{pointed connected CW-complexes} from the outset, but one easily sees that the relevant results are still true for $A=\Delta^0_+=S^0$.
	
	Moreover, at this point we do not know yet if the set $[X, \mathcal{L}\mathrm{K}(\mathbf{M}, n+1)]_{\mathbb{A}^1}$ is a group (though it indeed is, by \Cref{loopfreeloopbij}), we could not use the fact that the homomorphism $r_*: [X, \Omega\mathcal{L}\mathrm{K}(\mathbf{M}, n+1)]_{\mathbb{A}^1}\to[X, \Omega\mathrm{K}(\mathbf{M}, n+1)]_{\mathbb{A}^1}$ is split by $c_*$ (hence $r_*$ is surjective) yielding that the connecting homomorphism $[X, \Omega\mathrm{K}(\mathbf{M}, n+1)]_{\mathbb{A}^1}\to[X, \mathrm{K}(\mathbf{M}, n)]_{\mathbb{A}^1}$ vanishes to deduce the injectivity of $i_*$.
\end{remark}

Giving a stable vector bundle over $X$ is the same as giving a class $\xi\in[X, \mathrm{BGL}]_{\mathbb{A}^1}$. We will first study the lifting set $q_*^{-1}(\xi)$ following the treatment of \cite{JTh65}.

For $\theta\in{\rm H}^{n+1}(\mathrm{BGL}; \mathbf{M})$, let \[\theta'=\mathcal{L}\theta: \mathcal{L}\mathrm{BGL}\to\mathcal{L}\mathrm{K}(\mathbf{M}, n+1).\]
Given $\xi\in[X, \mathrm{BGL}]_{\mathbb{A}^1}$ as above, we say that $\gamma\in{\rm H}^n(X; \mathbf{M})$ is \emph{$\theta$-correlated to $\xi$} if there exists an element $\psi\in[X, \mathcal{L}\mathrm{BGL}]_{\mathbb{A}^1}$ such that $r_*\psi=\xi, \theta'_*\psi=i_*\gamma$. The situation is as the following diagram:
\begin{equation}\label{correl}
\begin{tikzcd}
& {[X, \mathcal{L}\mathrm{BGL}]_{\mathbb{A}^1}}  \arrow[d, "\theta'_*"']\arrow[r, "r_*"] & {[X, \mathrm{BGL}]_{\mathbb{A}^1}}\arrow[d, "\theta_*"]\\
{\gamma\in[X, \Omega\mathrm{K}(\mathbf{M}, n+1)]_{\mathbb{A}^1}} \arrow[r, "i_*"] & {[X, \mathcal{L}\mathrm{K}(\mathbf{M}, n+1)]_{\mathbb{A}^1}}  \arrow[r, "r_*"] & {[X, \mathrm{K}(\mathbf{M}, n+1)]_{\mathbb{A}^1}}.
\end{tikzcd}
\end{equation}
We denote the set of such elements $\gamma\in[X, \Omega\mathrm{K}(\mathbf{M}, n+1)]_{\mathbb{A}^1}$ by $C_{\theta}(\xi)\subset[X, \Omega\mathrm{K}(\mathbf{M}, n+1)]_{\mathbb{A}^1}={\rm H}^n(X, \mathbf{M})$. It is easy to check that
\begin{equation}
C_{\theta}(\xi)\neq\varnothing\Longleftrightarrow\theta_*(\xi)=0\in{\rm H}^{n+1}(X; \mathbf{M})=[X, \mathrm{K}(\mathbf{M}, n+1)]_{\mathbb{A}^1}\Longleftrightarrow q_*^{-1}(\xi)\neq\varnothing,
\end{equation}
in which case, one can see that for $\psi=c_*\xi$, we have $\theta'_*\psi\in i_*[X, \Omega\mathrm{K}(\mathbf{M}, n+1)]_{\mathbb{A}^1}(=\ker r_*)$.

\vspace{3mm}

Consider the action of ${\rm H}^n(X; \mathbf{M})=[X, \mathrm{K}(\mathbf{M}, n)]_{\mathbb{A}^1}$ on the set $[X, E]_{\mathbb{A}^1}$ associated to the fiber sequence (\ref{1stfibseq}) (given by ``concatenation of paths'' in the simplicial direction by suitably applying $\mathrm{L}_{\mathbb{A}^1}$).

For each $\eta\in[X, E]_{\mathbb{A}^1}$, this action restricts to a transitive action of ${\rm H}^n(X; \mathbf{M})=[X, \mathrm{K}(\mathbf{M}, n)]_{\mathbb{A}^1}$ on the set $q_*^{-1}(q_*\eta)\subset[X, E]_{\mathbb{A}^1}$, where \[q_*: [X, E]_{\mathbb{A}^1}\to[X, \mathrm{BGL}]_{\mathbb{A}^1}\]
is induced by the maps $q: E\to\mathrm{BGL}$ in (\ref{1stfibseq}). The following corresponds to \cite[Theorem 3.4]{JTh65}.
\begin{prop}\label{fundact}
	Fix $\eta\in[X, E]_{\mathbb{A}^1}$ Then the stabilizer of any $\eta'\in q_*^{-1}(q_*\eta)$ under this action is $C_{\theta}(q_*\eta)$; each orbit set forms a fiber of the map $q_*: [X, E]_{\mathbb{A}^1}\to[X, \mathrm{BGL}]_{\mathbb{A}^1}$.
	
	If $q_*^{-1}(\xi)\neq\varnothing$, then $q_*^{-1}(\xi)\cong{\rm H}^n(X; \mathbf{M})/C_{\theta}(\xi)$ and it has an abelian group structure.
\end{prop}
\begin{proof}
	As in the proof of \Cref{loopinjfreeloop}, by using the functorial $\mathbb{A}^1$-fibrant replacement functor $\mathrm{L}_{\mathbb{A}^1}$, we are reduced to the classical topological situation via \emph{simplicial} homotopy. Then the result follows from \cite[Chapter I, Lemma 7.3]{GJ} and \cite[Lemma 3.1, Theorems 3.2 and 3.3]{JTh65} (applying them to $A=S^0$ with $C$ replaced by our $\mathrm{L}_{\mathbb{A}^1}\mathrm{K}(\mathbf{M}, n+1)(X)$); notice that diagram (\ref{correl}) translates well to a diagram of simplicial homotopy classes in $\sset_*$; the stabilizer under the action is then identified with $C_{\theta}(q_*\eta)$, depending only on $q_*\eta'=q_*\eta\in[X, \mathrm{BGL}]_{\mathbb{A}^1}$ (note that $\theta_*(q_*\eta)=0$, compatible with the fact that $C_{\theta}(q_*\eta)\neq\varnothing$ required in (5.3), being a group).
\end{proof}

Note that $\mathrm{K}(\mathbf{M}, n+1)$ is a sheaf of simplicial abelian groups, and $\mathrm{BGL}$ has a binary operation $m$ induced by the maps $\mathrm{GL}_r\times\mathrm{GL}_s\to\mathrm{GL}_{r+s}, (A, B)\mapsto\left( \begin{array} { c c } { A } & \\  & { B }  \end{array} \right)$. Strictly speaking, these maps do not give a map $\mathrm{GL}\times\mathrm{GL}\to\mathrm{GL}$ on colimits, but will give a map $\coprod_{n\geqslant 0}\mathrm{BGL}_n\times\coprod_{n\geqslant 0}\mathrm{BGL}_n\to\coprod_{n\geqslant 0}\mathrm{BGL}_n$. By \cite[Proposition 4.3.10]{MV} (and see \cite[Remark 2 (p. 1162)]{SchT} for a correction), we have an $\mathbb{A}^1$-weak equivalence $\mathrm{BGL}\times\mathbb{Z}\xrightarrow{\simeq}\mathbf{R}\Omega\mathrm{B}(\coprod_{n\geqslant 0}\mathrm{BGL}_n)$, we get a map
\[m: (\mathrm{BGL}\times\mathbb{Z})\times(\mathrm{BGL}\times\mathbb{Z})\to\mathrm{BGL}\times\mathbb{Z}\]
in the $\mathbb{A}^1$-homotopy category, which restricts to the desired operation
\begin{equation}\label{hspop}
m: \mathrm{BGL}\times\mathrm{BGL}\to\mathrm{BGL}.
\end{equation}
Similar considerations in passing to the colimits appear in Quillen's construction of algebraic K-theory space using a choice of a bijection $\mathbb{N}\times\mathbb{N}\to\mathbb{N}$ and shows the choice doesn't matter up to homotopy on the $+$-construction (see also \cite[\S 15.2, Remark 2.3]{HJJS} in the topological situation); in the sequel, we just write $\mathrm{BGL}$ instead of the correct (but awkward) form $\mathrm{L}_{\mathbb{A}^1}\mathrm{BGL}$. Below, we prove that $\mathrm{BGL}$ is an abelian group object in $\EuScript{H}^{\mathbb{A}^1}_*(S)$.

Below we will need some results on motivic $T$-spectra, where $T=\mathbb{P}^1\simeq S^1\wedge\mathbb{G}_m$.
\begin{prop}
	Let $H=(H_0, H_1, H_2, \cdots)$ be a motivic $T$-spectrum for a base scheme $S$. Define $H'=(H_0', H_1', H_2', \cdots)$ by $H_n':=\mathbf{R}\underline{\mathrm{Hom}}_*(\mathbb{G}_m^{\wedge n}, H_n)$. Then
	\begin{enumerate}[label=\emph{(\arabic*)}]
		\item $H'=(H_0', H_1', H_2', \cdots)$ is a motivic $S^1$-spectrum (with suitable bonding maps), and $H_0'\simeq\mathrm{L}_{\mathbb{A}^1}H_0$.
		\item $H_0$ is an abelian group object in the pointed $\mathbb{A}^1$-homotopy category $\EuScript{H}^{\mathbb{A}^1}_*(S)$.
		\item For any $X\in\sps(\EuScript{S}\mathsf{m}_S)_*$, the derived mapping space ${\rm RMap}_*(X, H_0)\in\sset_*$ is an $\infty$-loop space and hence all its components are weakly equivalent.
	\end{enumerate}
\end{prop}
\begin{proof}
	We have $H_0'=\mathbf{R}\underline{\mathrm{Hom}}_*(\mathbb{G}_m^{\wedge 0}, H_0)=\mathbf{R}\underline{\mathrm{Hom}}_*(S^0, H_0)\simeq\underline{\mathrm{Hom}}_*(S^0, \mathrm{L}_{\mathbb{A}^1}H_0)=\mathrm{L}_{\mathbb{A}^1}H_0$.
	
	By performing a fibrant replacement, we may assume that $H$ is a fibrant motivic $T$-spectrum, so that each $H_n\in\sps(\EuScript{S}\mathsf{m}_S)_*$ is $\mathbb{A}^1$-fibrant and the adjoint bonding maps $H_n\to \underline{\mathrm{Hom}}_*(\mathbb{P}^1, H_{n+1})$ are $\mathbb{A}^1$-weak equivalences (even local weak equivalences). Thus
	\begin{equation*}
	\begin{split}
	H_n'&=\underline{\mathrm{Hom}}_*(\mathbb{G}_m^{\wedge n}, H_n)\simeq\underline{\mathrm{Hom}}_*(\mathbb{G}_m^{\wedge n}, \underline{\mathrm{Hom}}_*(S^1\wedge\mathbb{G}_m, H_{n+1}))\\
	&\cong\underline{\mathrm{Hom}}_*(S^1\wedge\mathbb{G}_m^{\wedge n+1}, H_{n+1})\cong\underline{\mathrm{Hom}}_*(S^1, \underline{\mathrm{Hom}}_*(\mathbb{G}_m^{\wedge n+1}, H_{n+1}))=\Omega H_{n+1}',
	\end{split}
	\end{equation*}
	showing that $H'=(H_0', H_1', H_2', \cdots)$ is a motivic $S^1$-spectrum, with bonding maps adjoint of the above weak equivalences, (1) is proved.
	
	For (2), just note that for any $X\in\sps(\EuScript{S}\mathsf{m}_S)_*$, the set $[X, H_0]_{\mathbb{A}^1, *}=[X, H_0']_{\mathbb{A}^1, *}=[X, \Omega^2H_2']_{\mathbb{A}^1, *}$ is an abelian group (any term in a motivic $S^1$-spectrum is an abelian group object in $\EuScript{H}^{\mathbb{A}^1}_*(S)$).
	
	For (3), note that for all $K\in\sset_*, X\in\sps(\EuScript{S}\mathsf{m}_S)_*$, there are canonical isomorphisms
	\[[K, {\rm RMap}_*(X, H_0)]_{\sset_*}\cong[K\wedge X, H_0]_{\mathbb{A}^1, *},\]
	the right hand side are abelian groups by (2).
\end{proof}
\begin{corollary}\label{bgl-comp}
	The space $\mathrm{BGL}$ is an abelian group object in the pointed $\mathbb{A}^1$-homotopy category $\EuScript{H}^{\mathbb{A}^1}_*(S)$ and for any $X\in\EuScript{S}\mathsf{m}_S$, all the components of the derived mapping space ${\rm RMap}(X, \mathrm{BGL})\in\sset_*$ are weakly equivalent.
	
	If $S={\rm Spec}(k)$ for a perfect field $k$, then $\mathrm{BSL}$ is an abelian group object in the pointed $\mathbb{A}^1$-homotopy category $\EuScript{H}^{\mathbb{A}^1}_*(k)$ and all the components of the derived mapping space ${\rm RMap}(X, \mathrm{BSL})\in\sset_*$ are weakly equivalent.
\end{corollary}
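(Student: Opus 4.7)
The plan is to derive the corollary as an essentially immediate instantiation of the preceding theorem, applied to carefully chosen motivic $T$-spectra whose zeroth spaces recover $\mathrm{BGL}$ and $\mathrm{BSL}$. For $\mathrm{BGL}$, I would take $H = \mathbf{KGL}$, Voevodsky's algebraic $K$-theory $T$-spectrum over $S$, whose underlying pointed space $H_0$ is $\mathbb{A}^1$-weakly equivalent to $\mathbb{Z} \times \mathrm{BGL}$ after $\mathbb{A}^1$-fibrant replacement. Part~(2) of the theorem then yields that $\mathbb{Z} \times \mathrm{BGL}$ is an abelian group object in $\EuScript{H}^{\mathbb{A}^1}_*(S)$. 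Since the stabilization maps used to construct the multiplication $m$ at the start of the section preserve the $\pi_0^{\mathbb{A}^1} = \mathbb{Z}$-grading (being induced by block sum of matrices, which adds ranks), the base-point-connected component $\mathrm{BGL}$ is closed under the addition, inversion, and unit maps, hence inherits the abelian group object structure.

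For $\mathrm{BSL}$ over a perfect field $k$, I would invoke the $\mathbb{A}^1$-fiber sequence
\[
\mathrm{BSL} \longrightarrow \mathrm{BGL} \xrightarrow{\det} \mathrm{B}\mathbb{G}_m
\]
induced by the determinant. The target $\mathrm{B}\mathbb{G}_m$ is an abelian group object in $\EuScript{H}^{\mathbb{A}^1}_*(k)$, either because $\mathbb{G}_m$ is itself a sheaf of abelian groups and the bar construction $\mathrm{B}(-)$ is lax monoidal, or because $\mathrm{B}\mathbb{G}_m$ is the motivic Eilenberg--MacLane space $K(\mathbb{G}_m, 1)$. The map $\det$ is a morphism of abelian group objects in the $\mathbb{A}^1$-homotopy category since it is induced by the group scheme homomorphism $\det \colon \mathrm{GL} \to \mathbb{G}_m$ (compatible with direct sums). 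Therefore its $\mathbb{A}^1$-homotopy fiber $\mathrm{BSL}$ canonically inherits the structure of an abelian group object, as homotopy fibers of morphisms of $\infty$-group objects are again $\infty$-group objects.

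The second assertion, that the components of the derived mapping space are weakly equivalent, is then automatic: part~(3) of the theorem shows that $\mathrm{RMap}(X, \mathrm{BGL}) = \mathrm{RMap}_*(X_+, \mathrm{BGL}) \in \sset_*$ is an $\infty$-loop space, in particular a grouplike $H$-space, and translation by elements of $\pi_0$ permutes its connected components transitively, providing weak equivalences between any two of them. The same argument applies verbatim to $\mathrm{BSL}$.

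The main obstacle I expect is purely formal bookkeeping: namely, verifying rigorously that the abelian group object structure on $\mathbb{Z} \times \mathrm{BGL}$ truly restricts to the component $\mathrm{BGL}$ (this hinges on the compatibility of the stabilization map with $\pi_0$-degree, as discussed in the paragraph preceding the theorem), and checking that the determinant lifts to a genuine morphism of abelian group objects in $\EuScript{H}^{\mathbb{A}^1}_*(k)$---rather than merely a pointed map in the homotopy category---so that the $\mathbb{A}^1$-homotopy fiber inherits the algebraic structure rather than only an $H$-space structure. Both verifications reduce to the monoidal functoriality of the bar construction applied to the determinant homomorphism, but they warrant explicit mention.
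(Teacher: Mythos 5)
Your proof takes essentially the same approach as the paper: both instantiate the preceding theorem with the algebraic $K$-theory $T$-spectrum (all levels $\mathbb{Z}\times\mathrm{BGL}$) to get the abelian group structure on $\mathbb{Z}\times\mathrm{BGL}$, both deduce the $\mathrm{BSL}$ case from the determinant fibration $\mathrm{BSL}\to\mathrm{BGL}\to\mathrm{B}\mathbb{G}_m$, and both deduce the mapping-space statement from part (3). One small remark: the "obstacle" you flag at the end---that the structure on $\mathbb{Z}\times\mathrm{BGL}$ genuinely restricts to $\mathrm{BGL}$, and that $\det$ is a homomorphism---is exactly what the paper dispatches in a cleaner way, by observing that both $\mathrm{BGL}$ and $\mathrm{BSL}$ arise as kernels of \emph{split} homomorphisms of abelian group objects (the projection $\mathbb{Z}\times\mathrm{BGL}\to\mathbb{Z}$, and $\det\colon\mathrm{BGL}\to\mathrm{B}\mathbb{G}_m=\mathrm{K}(\mathbf{K}^{\mathrm{M}}_1,1)$ respectively), so the inherited structure is immediate without invoking monoidal functoriality of the bar construction.
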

\begin{proof}
	Since the motivic $T$-spectrum representing algebraic K-theory has term $\mathrm{BGL}\times\mathbb{Z}$ in each level, by the previous result we see that $\mathrm{BGL}\times\mathbb{Z}$ is an abelian group object in $\EuScript{H}^{\mathbb{A}^1}(S)$. We conclude by noting that the projection $\mathrm{BGL}\times\mathbb{Z}\to\mathbb{Z}$ is a homomorphism of abelian group objects with kernel $\mathrm{BGL}$ (by kernel here we really mean that the kernel of the map $[-,\mathrm{BGL}\times\mathbb{Z}]_{\mathbb{A}^1, *}\to[-,\mathbb{Z}]_{\mathbb{A}^1, *}$ on $\EuScript{H}^{\mathbb{A}^1}_*(S)$).
	
	If $S={\rm Spec}(k)$, note that there is an $\mathbb{A}^1$-homotopy fiber sequence $\mathrm{BSL}\to\mathrm{BGL}\to\mathrm{B}\mathbb{G}_m=\mathrm{K}(\mathbf{K}^{\mathrm{M}}_{1}, 1)$ (thanks to the fact that the Picard group of a normal scheme is $\mathbb{A}^1$-invariant, yielding that $\mathbb{G}_m\in\grp_{k}^{\mathbb{A}^1}$), realizing $\mathrm{BSL}$ as the kernel of $\mathrm{BGL}\to\mathrm{K}(\mathbf{K}^{\mathrm{M}}_{1}, 1)$ (note that the second arrow splits), thus $\mathrm{BSL}$ is an abelian group object as well.
\end{proof}
This is to be compared with the fact in classical topology that for ``non-stable'' groups, the path-components of ${\rm map}(X, {\rm B}G)$---whose homotopy types are closely related with gauge groups---may represent (infinitely) many distinct homotopy types (see e.g. \cite{Smithhofun}). Recall that an \emph{abelian group object} in the pointed homotopy category ${\rm Ho}(\sset_*)$, also called an \emph{abelian $H$-group}, is a pointed space with a binary operation which makes the sets of homotopy classes to it abelian groups in a functorial way.
\begin{prop}\label{abhspcomp}
	Let $(B, 0)\in\sset_*$ be an abelian $H$-group with a binary operation $m: (B, 0)\times(B, 0)\to(B, 0), (b, b')\mapsto bb'$. Denote its path components by $B_{\xi}, \xi\in\pi_0B$. Then
	\[(B, 0)\cong (B_0, 0)\times\pi_0(B, 0)\]
	as abelian $H$-groups.
\end{prop}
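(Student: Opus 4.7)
The plan is to split the canonical projection $\pi\colon (B,0)\to\pi_0(B,0)$ as a morphism of abelian $H$-groups, so that $B$ becomes (abstractly) a product of the kernel $B_0$ and the cokernel $\pi_0 B$. Throughout I silently replace $B$ by a Kan complex so that simplicial homotopy is an equivalence relation.

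Applying $\pi_0$ to the $H$-group structure $m$ equips $\pi_0(B,0)$ with a strict abelian group structure, and the canonical map $\pi\colon B\to\pi_0(B,0)$, with target viewed as a discrete pointed simplicial set, is a morphism of abelian $H$-groups whose homotopy fibre over $0$ is the inclusion $i\colon (B_0,0)\hookrightarrow (B,0)$. I construct a set-theoretic section $s\colon\pi_0(B,0)\to (B,0)$ in $\sset_*$ by picking, for each $\xi\in\pi_0 B$, any vertex $x_\xi\in B_\xi$ with $x_0=0$; since $\pi_0 B$ is discrete, this data assembles into an honest pointed simplicial map with $\pi\circ s=\mathrm{id}$. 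The crucial claim is that $s$ is a homomorphism in ${\rm Ho}(\sset_*)$, i.e.\ $m\circ(s\times s)\simeq s\circ a$ as maps $\pi_0 B\times\pi_0 B\to B$, where $a$ denotes addition on $\pi_0 B$. Both compositions send $(\xi,\eta)$ into the component indexed by $\xi+\eta$, so they induce the same function on $\pi_0$; since $\pi_0 B\times\pi_0 B$ is discrete, homotopy classes of simplicial maps from it into the Kan complex $B$ are detected exactly by the induced map on $\pi_0$, whence the claim.

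Next, define $\Phi\colon (B_0,0)\times\pi_0(B,0)\to(B,0)$ by $\Phi=m\circ(i\times s)$. As a composite of morphisms of abelian $H$-groups, $\Phi$ is itself such a morphism, and it remains to show that it is a weak equivalence. Clearly $\Phi$ induces the obvious bijection on $\pi_0$, sending $B_0\times\{\xi\}$ to $B_\xi$. Its restriction to each slice is the right translation $b\mapsto b\cdot x_\xi\colon B_0\to B_\xi$, which is a weak equivalence because right translation by $x_\xi$ in the $H$-group $B$ is homotopy invertible: an inverse is furnished by right translation by any $x_{-\xi}\in B_{-\xi}$, using associativity, commutativity, and the $H$-group identity up to homotopy. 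A map that induces a bijection on $\pi_0$ and a weak equivalence on each path component is itself a weak equivalence, so $\Phi$ is the desired isomorphism of abelian $H$-groups.

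The sole subtle point is the homomorphism property of $s$; it rests entirely on the discreteness of $\pi_0 B$, which promotes the obvious ``agreement on components'' into a genuine simplicial homotopy. Once this is in place, everything else reduces to standard $H$-space manipulation (upgrading $\pi_0$ to an abelian group, invertibility of translations, and componentwise recognition of weak equivalences).
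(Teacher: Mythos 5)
Your proof is correct and takes essentially the same approach as the paper's: both choose basepoints $x_\xi=b_\xi$ in each component with $x_0=0$ and use the map $(b,\xi)\mapsto bx_\xi$ from $B_0\times\pi_0(B,0)$ to $B$ as the isomorphism of abelian $H$-groups. The paper simply asserts that this map and the explicit inverse $(b\in B_\xi)\mapsto(bb_{-\xi},\xi)$ are mutually inverse homomorphisms, whereas you fill in the details the paper leaves implicit, namely the discreteness-of-$\pi_0 B$ argument showing the section $s$ is a homomorphism, and the recognition of $\Phi$ as a weak equivalence via a $\pi_0$-bijection together with componentwise translation equivalences.
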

\begin{proof}
	Fix a base point $b_{\xi}$ in each component $B_{\xi}$ with $b_0=0$. Then we easily find that the maps
	\[B\to B_0\times\pi_0(B, 0), (b\in B_{\xi})\mapsto(bb_{-\xi}, \xi)\]
	and
	\[B_0\times\pi_0(B, 0)\to B, (b, \xi)\mapsto bb_{\xi}\]
	are homomorphisms of $H$-groups and are homotopy inverse to each other.
\end{proof}

The map $m: \mathrm{BGL}\times\mathrm{BGL}\to\mathrm{BGL}$ as in (\ref{hspop}) induces for any $X\in\EuScript{S}\mathsf{m}_k$ the addition on $\widetilde{{\rm K}}_0(X)=[X, \mathrm{BGL}]_{\mathbb{A}^1}$: $m_*(\xi, \xi')=\xi+\xi'$ (one sees this by noting the effect of $m$ on the cocycles of vector bundles gives the direct sums).

Using this, for a given pointed map $\theta: (\mathrm{BGL}, e)\to(\mathrm{K}(\mathbf{M}, n+1), 0)$, we can define a map
\[\theta_1: \mathrm{BGL}\times\mathrm{BGL}\to\mathrm{K}(\mathbf{M}, n+1)\]
which on sections is given by $(x, y)\mapsto\theta(m(x, y))-\theta(y)$ and further a map
\[\theta_2: \Omega\mathrm{BGL}\times\mathrm{BGL}\to\Omega\mathrm{K}(\mathbf{M}, n+1)\]
which on a loop is given by $\theta_1$ (see \cite[p. 486]{JTh65}) since $\theta_1(e, y)=0$. Formally, let $\theta_1^{\flat}: \mathrm{BGL}\to\underline{\mathrm{Hom}}(\mathrm{BGL}, \mathrm{K}(\mathbf{M}, n+1))$ be the adjoint of $\theta_1: \mathrm{BGL}\times\mathrm{BGL}\to\mathrm{K}(\mathbf{M}, n+1)$, then $\theta_2$ is the restriction of the adjoint of the composite
\[\mathrm{BGL}\xrightarrow{\theta_1^{\flat}}\underline{\mathrm{Hom}}(\mathrm{BGL}, \mathrm{K}(\mathbf{M}, n+1))\to\underline{\mathrm{Hom}}(\mathcal{L}\mathrm{BGL}, \mathcal{L}\mathrm{K}(\mathbf{M}, n+1)).\]
It can be obtained in the same way as we get $v$ from $u$ in \S 3.

We thus get a map
\[(\theta_2)_*: [X, \Omega\mathrm{BGL}]_{\mathbb{A}^1}\times[X, \mathrm{BGL}]_{\mathbb{A}^1}\to[X, \Omega\mathrm{K}(\mathbf{M}, n+1)]_{\mathbb{A}^1}=[X, \mathrm{K}(\mathbf{M}, n)]_{\mathbb{A}^1}.\]
For a class $\xi\in[X, \mathrm{BGL}]_{\mathbb{A}^1}$, we obtain the map
\begin{equation}\label{delt}
\Delta(\theta, \xi): {\rm K}_1(X)=[X, \Omega\mathrm{BGL}]_{\mathbb{A}^1}\to[X, \mathrm{K}(\mathbf{M}, n)]_{\mathbb{A}^1}={\rm H}^{n}(X; \mathbf{M})
\end{equation}
given by $\beta\mapsto(\theta_2)_*(\beta, \xi)$. Then $\Delta(\theta, \xi)=(\theta_2)_*(-, \xi)$ is a homomorphism of abelian groups, whose effect is given by \Cref{cptbmult} below.

Note further that if $\tau: \mathbf{M}\to\mathbf{M}'$ is a homomorphism of sheaves of abelian groups, then it is easy to see that
\[\tau_*\Delta(\theta, \xi)=\Delta(\tau_*\theta, \xi).\]

The binary operation $m: \mathrm{BGL}\times\mathrm{BGL}\to\mathrm{BGL}$ also determines a map $m': \Omega\mathrm{BGL}\times\mathrm{BGL}\to\mathcal{L}\mathrm{BGL}$ which on a loop is given by $m$ (see \cite[p. 493]{JTh65}). Formally, let $m^{\flat}: \mathrm{BGL}\to\underline{\mathrm{Hom}}(\mathrm{BGL}, \mathrm{BGL})$ be the adjoint of $m: \mathrm{BGL}\times\mathrm{BGL}\to\mathrm{BGL}$, then $m'$ is the restriction of the adjoint of the composite
\[\mathrm{BGL}\xrightarrow{m^{\flat}}\underline{\mathrm{Hom}}(\mathrm{BGL}, \mathrm{BGL})\xrightarrow{(-)^{\Delta^1}}\underline{\mathrm{Hom}}(\mathrm{BGL}^{\Delta^1}, \mathrm{BGL}^{\Delta^1}).\]
On local sections $(\lambda, y)$, $m'$ ``translates'' a loop $\lambda$ at the base point of $\mathrm{BGL}$ to a loop based at $y$.

\begin{prop}\label{loopfreeloopbij}
	The map $m': \Omega\mathrm{BGL}\times\mathrm{BGL}\to\mathcal{L}\mathrm{BGL}$ is an isomorphism in $\EuScript{H}^{\mathbb{A}^1}_*(S)$.
	
	So the induced map
	\[m'_*: [X, \Omega\mathrm{BGL}]_{\mathbb{A}^1}\times[X, \mathrm{BGL}]_{\mathbb{A}^1}\to[X, \mathcal{L}\mathrm{BGL}]_{\mathbb{A}^1}\]
	is an isomorphism of abelian groups and satisfies
	\[m'_*(\beta, \alpha)=i_*\beta+c_*\alpha,\]
	where $c: \mathrm{BGL}\to\mathcal{L}\mathrm{BGL}, i: \Omega\mathrm{BGL}\to\mathcal{L}\mathrm{BGL}$ are the maps constructed before. So $r_*m'_*(\beta, \alpha)=\alpha$.
\end{prop}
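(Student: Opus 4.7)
My plan is to exploit the abelian $H$-group structure on $\mathrm{BGL}$ established in \Cref{bgl-comp}, which promotes the fiber sequence $\Omega\mathrm{BGL}\xrightarrow{i}\Omega^*\mathrm{BGL}\xrightarrow{r}\mathrm{BGL}$ to a split short exact sequence of abelian $H$-groups, from which the isomorphism is immediate.

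More precisely, first I would observe that since $\mathrm{BGL}$ is an abelian group object in $\EuScript{H}^{\mathbb{A}^1}_*(S)$, the derived mapping object $\Omega^*\mathrm{BGL}$, obtained by applying $\mathbf{R}\ihm(\Delta^1,-)$, inherits an abelian $H$-group structure by pointwise multiplication; similarly for its subobject $\Omega\mathrm{BGL}$. The restriction map $r: \Omega^*\mathrm{BGL}\to\mathrm{BGL}$ (evaluation at either endpoint, equal on $\Omega^*\mathrm{BGL}$ by the equalizer definition), its section $c: \mathrm{BGL}\to\Omega^*\mathrm{BGL}$ (constant loops), and the inclusion $i: \Omega\mathrm{BGL}\to\Omega^*\mathrm{BGL}$ are all homomorphisms of $H$-groups, with $r\circ c=\mathrm{id}_{\mathrm{BGL}}$ and $r\circ i=0$.

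Next, I would produce the inverse to $m'$ directly, by defining
\[
\varphi: \Omega^*\mathrm{BGL}\to\Omega\mathrm{BGL}\times\mathrm{BGL},\qquad \gamma\mapsto(\gamma - c(r\gamma),\; r\gamma),
\]
where subtraction is in the abelian $H$-group $\Omega^*\mathrm{BGL}$ and $\gamma-c(r\gamma)$ indeed lies in $\ker(r)=\Omega\mathrm{BGL}$. Using that $m^{\Delta^1}$ (pointwise multiplication) is an $H$-group operation, a direct computation with the $H$-group axioms shows $\varphi\circ m'=\mathrm{id}$ and $m'\circ\varphi=\mathrm{id}$ in $\EuScript{H}^{\mathbb{A}^1}_*(S)$. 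Conceptually, this is the standard fact that a split short exact sequence of abelian group objects is a biproduct, applied internally to $\EuScript{H}^{\mathbb{A}^1}_*(S)$ (a variant of \Cref{abhspcomp} but for abelian $H$-groups rather than homotopy components).

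For the second part, I would unwind the definition of $m'$: it is the composite
\[
\Omega\mathrm{BGL}\times\mathrm{BGL}\xrightarrow{i\times c}\Omega^*\mathrm{BGL}\times\Omega^*\mathrm{BGL}\xrightarrow{m^{\Delta^1}}\Omega^*\mathrm{BGL},
\]
so in the additive notation for the abelian $H$-group $\Omega^*\mathrm{BGL}$, we read off $m'(\beta,\alpha)=i(\beta)+c(\alpha)$. Passing to $[X,-]_{\mathbb{A}^1}$, which is additive in an abelian $H$-group target, yields $m'_*(\beta,\alpha)=i_*\beta+c_*\alpha$; applying $r_*$ and using $r\circ i=0$, $r\circ c=\mathrm{id}$ gives $r_*m'_*(\beta,\alpha)=\alpha$.

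The main subtlety is not the algebra but ensuring everything is interpreted in the derived sense: the free path object $\Omega^*\mathrm{BGL}$ must be computed on an $\mathbb{A}^1$-fibrant replacement of $\mathrm{BGL}$, and the $H$-group structure must be known to descend to $\EuScript{H}^{\mathbb{A}^1}_*(S)$ along with its compatibilities. Both follow from \Cref{bgl-comp} (which gives the $H$-group structure in the homotopy category) together with the fact that the simplicial model structure on $\sps(\sm_S)_*$ is a Quillen module structure compatible with $\mathbb{A}^1$-localization, so that $\mathbf{R}\ihm(\Delta^1,-)$ preserves the $H$-group structure up to the required coherence.
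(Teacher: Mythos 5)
Your approach is genuinely different from the paper's: the paper reduces to the classical simplicial setting via the $\mathbb{A}^1$-fibrant replacement functor and invokes \cite[Theorem 2.7]{JTh65}, while you try to construct an explicit inverse $\varphi$ using the abelian group object structure from \Cref{bgl-comp}. Your set-up is fine: $\Omega^*\mathrm{BGL}$ does inherit an abelian group object structure in $\EuScript{H}^{\mathbb{A}^1}_*(S)$; the maps $r, c, i$ are homomorphisms with $r\circ c=\mathrm{id}$, $r\circ i=0$; and the unwinding $m'=i\circ\mathrm{pr}_1 + c\circ\mathrm{pr}_2$ is correct.

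The gap is in the construction of $\varphi$. The formula $\gamma\mapsto(\gamma-c(r\gamma),\, r\gamma)$ is a pointwise formula, but $\mathrm{BGL}$ is an abelian group object only in the homotopy category $\EuScript{H}^{\mathbb{A}^1}_*(S)$, not as a strict simplicial abelian group (K-theory has nontrivial $k$-invariants, so no such rigidification exists). Thus ``subtraction'' is a map in the homotopy category, and the composite $r\circ(\mathrm{id}-c\circ r)$ is zero only as a homotopy class, so $\mathrm{id}-c\circ r$ does not \emph{land in} $\Omega\mathrm{BGL}$; it merely \emph{factors through} the homotopy fiber, and the factorization is not canonical. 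If one picks such a factorization $\psi$, verifying $\psi\circ i=\mathrm{id}$ and $\psi\circ c=0$ (needed for $\varphi\circ m'=\mathrm{id}$) requires $i$ to be a monomorphism in $\EuScript{H}^{\mathbb{A}^1}_*(S)$, which you have not established. Likewise, ``a split short exact sequence of abelian group objects is a biproduct'' requires a way to identify the complement of the split epi with $\Omega\mathrm{BGL}$; this does not follow formally from having a fiber sequence together with a section.

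Your idea can be repaired by descending to homotopy classes, which is close to what \cite{JTh65} actually does: for each $X\in\sps(\sm_S)_*$, the fiber sequence and the section give that $r_*$ is surjective with section $c_*$ and (via surjectivity of $(\Omega r)_*$) that $i_*$ is injective, yielding a split short exact sequence of honest abelian groups $0\to[X,\Omega\mathrm{BGL}]\to[X,\Omega^*\mathrm{BGL}]\to[X,\mathrm{BGL}]\to 0$; one then checks $m'_*$ is the canonical splitting isomorphism (surjectivity uses exactly the manipulation $\gamma-c_*r_*\gamma\in\ker r_*=\operatorname{im} i_*$, now legitimate since it happens in an abelian group, not a space). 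As this holds for all $X$, $m'$ is an isomorphism in $\EuScript{H}^{\mathbb{A}^1}_*(S)$. Alternatively, the paper's route---replace everything by $\mathrm{L}_{\mathbb{A}^1}$-fibrant objects, observe $m'$ is a map of fibrations over $\mathrm{BGL}$ that is a weak equivalence on fibers, and apply the long exact sequence---avoids the need for the abelian structure entirely.
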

\begin{proof}
	As in the proof of \Cref{loopinjfreeloop}, for $X\in\EuScript{S}\mathsf{m}_k$, using
	\[[X, \Omega\mathrm{BGL}]_{\mathbb{A}^1}\cong[X, \Omega\mathrm{L}_{\mathbb{A}^1}\mathrm{BGL}]_{\mathbb{A}^1}=[\Delta^0_+, \Omega\mathrm{L}_{\mathbb{A}^1}\mathrm{BGL}(X)]_{\sset_*}\]
	etc., and in general, for $X\in\sps(\EuScript{S}\mathsf{m}_S)_*$, we have
	\[[X, \Omega\mathrm{BGL}]_{\mathbb{A}^1, *}\cong\pi_0{\rm RMap}_*(X, \Omega\mathrm{L}_{\mathbb{A}^1}\mathrm{BGL})\]
	etc., we are reduced to the classical topological situation, which is given in \cite[Theorem 2.7]{JTh65}.
\end{proof}

Denoting the addition on $\mathrm{K}(\mathbf{M}, n+1)$ by $a$, then we have a similar construction of an isomorphism $a': \Omega\mathrm{K}(\mathbf{M}, n+1)\times\mathrm{K}(\mathbf{M}, n+1)\to\mathcal{L}\mathrm{K}(\mathbf{M}, n+1)$, which induces an isomorphism
\[a'_*: [X, \Omega\mathrm{K}(\mathbf{M}, n+1)]_{\mathbb{A}^1}\times[X, \mathrm{K}(\mathbf{M}, n+1)]_{\mathbb{A}^1}\to[X, \mathcal{L}\mathrm{K}(\mathbf{M}, n+1)]_{\mathbb{A}^1}.\]
In fact, the map $a'$ itself is an isomorphism by the degreewise split fiber sequence
\[
\begin{tikzcd}
\Omega\mathrm{K}(\mathbf{M}, n+1) \arrow[r,"i"] & \mathcal{L}\mathrm{K}(\mathbf{M}, n+1)  \arrow[r,shift left,"r"]
\arrow[r,<-,shift right,swap,"c"]& \mathrm{K}(\mathbf{M}, n+1).
\end{tikzcd}
\]

\begin{prop}\label{cptbmult}
	We have
	\begin{equation}
	\theta'_*m'_*(\beta, \xi)=a'_*(\Delta(\theta, \xi)(\beta), \theta_*\xi)=a'_*((\theta_2)_*(\beta, \xi), \theta_*\xi),
	\end{equation}
	as shown in the diagram below:
	\[
	\begin{tikzcd}[column sep = 6em]
	{[X, \Omega\mathrm{BGL}]_{\mathbb{A}^1}\times[X, \mathrm{BGL}]_{\mathbb{A}^1}} \arrow{r}{((\theta_2)_*, \theta_*{\rm pr}_2)} \arrow[d, "\rotatebox{-90}{$\cong$}", "m'_*"']  & {[X, \Omega\mathrm{K}(\mathbf{M}, n+1)]_{\mathbb{A}^1}\times[X, \mathrm{K}(\mathbf{M}, n+1)]_{\mathbb{A}^1}} \arrow[d, "a'_*", "\rotatebox{90}{$\cong$}"']\\
	{[X, \mathcal{L}\mathrm{BGL}]_{\mathbb{A}^1}} \arrow[r, "\theta'_*"']          & {[X, \mathcal{L}\mathrm{K}(\mathbf{M}, n+1)]_{\mathbb{A}^1}}
	\end{tikzcd}\]
	
	If $\theta_*\xi=0$, then 
	\begin{equation}\label{fundcordelta}
	\theta'_*m'_*(\beta, \xi)=i_*(\Delta(\theta, \xi)(\beta)).
	\end{equation}
\end{prop}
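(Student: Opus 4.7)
The plan is to reduce the assertion to an identity of maps in $\sps(\sm_k)_*$, essentially a section-level pointwise identity, and then apply $[X,-]_{\mathbb{A}^1}$. As in the proof of \Cref{loopinjfreeloop} and \Cref{loopfreeloopbij}, after passing through the $\mathbb{A}^1$-fibrant replacement $\mathrm{L}_{\mathbb{A}^1}$ (which commutes with $\Omega$, $\Omega^{*}$, products and $\ihm_*$ up to weak equivalence, and which preserves $\mathrm{K}(\mathbf{M},n+1)$ already $\mathbb{A}^1$-local), the sets $[X,-]_{\mathbb{A}^1}$ compute as path components of derived mapping spaces. So it suffices to establish the underlying identity of maps of (pointed) motivic spaces
\[\theta' \circ m' \;=\; a' \circ \bigl(\theta_2,\; \theta\circ\mathrm{pr}_2\bigr) : \Omega\mathrm{BGL}\times\mathrm{BGL} \longrightarrow \Omega^{*}\mathrm{K}(\mathbf{M},n+1),\]
since applying $[X,-]_{\mathbb{A}^1}$ then yields exactly the displayed commutative square.

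To prove this identity, I would unwind the constructions of $m'$, $\theta'$, $\theta_2$, $a'$ on local sections of a test scheme $U$. Given $(\beta,\xi)$ with $\beta$ a loop at the base point $e$ of $\mathrm{BGL}$ and $\xi$ a section of $\mathrm{BGL}$, the construction recalled before \Cref{loopfreeloopbij} describes $m'(\beta,\xi)$ as the loop at $\xi$ given by $t\mapsto m(\beta(t),\xi)$. Applying $\theta'=\theta^{\Delta^1}$ pointwise yields the loop at $\theta(\xi)$ sending $t\mapsto \theta\bigl(m(\beta(t),\xi)\bigr)$. On the other hand, by the definition of $\theta_1(x,y)=\theta(m(x,y))-\theta(y)$ and of $\theta_2$ as the loop-level version of $\theta_1$, the element $\theta_2(\beta,\xi)$ is the loop at $0\in\mathrm{K}(\mathbf{M},n+1)$ sending $t\mapsto\theta\bigl(m(\beta(t),\xi)\bigr)-\theta(\xi)$. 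Translating this loop via $a'$ to base at $\theta(\xi)$ produces precisely $t\mapsto \theta\bigl(m(\beta(t),\xi)\bigr)$, matching the LHS.

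Once this pointwise identity is established at the level of sections, naturality and the adjoint characterizations of $m',\theta_2,a'$ given in the text (as restrictions of adjoints of the induced maps on $\ihm$-objects) upgrade it to an equality of morphisms in $\EuScript{H}^{\mathbb{A}^1}_*$; the analogous classical argument is carried out in \cite[Theorem 3.5]{JTh65}, and the motivic version is obtained by running the same manipulation inside $\mathrm{L}_{\mathbb{A}^1}$-replacements. Taking $[X,-]_{\mathbb{A}^1}$ then gives the first displayed equation, using the definition $\Delta(\theta,\xi)(\beta)=(\theta_2)_*(\beta,\xi)$.

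For the second statement, assume $\theta_*\xi=0$. The analog of \Cref{loopfreeloopbij} for $\mathrm{K}(\mathbf{M},n+1)$ (or equivalently the formula $a'_*(\gamma,\alpha)=i_*\gamma+c_*\alpha$ obtained by transposing the split fiber sequence $\Omega\mathrm{K}(\mathbf{M},n+1)\xrightarrow{i}\Omega^{*}\mathrm{K}(\mathbf{M},n+1)\rightleftarrows\mathrm{K}(\mathbf{M},n+1)$) yields $a'_*(\Delta(\theta,\xi)(\beta),0)=i_*\Delta(\theta,\xi)(\beta)+c_*0=i_*\Delta(\theta,\xi)(\beta)$, proving \eqref{fundcordelta}. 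The main technical obstacle is only bookkeeping: checking that all the ad hoc adjoints used to define $\theta_2$ and $m'$ on abelian $H$-group objects really do produce the same map up to $\mathbb{A}^1$-homotopy as the naive section-level formula; this is handled uniformly by the passage through $\mathrm{L}_{\mathbb{A}^1}$ and citation of the corresponding simplicial statement from \cite{JTh65}.
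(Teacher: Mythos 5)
Your proof is correct and follows essentially the same route as the paper: reduce to the simplicial situation by passing through the $\mathbb{A}^1$-fibrant replacement $\mathrm{L}_{\mathbb{A}^1}$ and taking sections over $X$, then invoke the corresponding formula from James-Thomas. You also spell out the explicit pointwise verification (translating the loop $t\mapsto\theta(m(\beta(t),\xi))-\theta(\xi)$ from the base point to $\theta(\xi)$ via $a'$), which the paper leaves entirely to the citation of \cite[eq.\ (2.8), p.\ 495]{JTh65}; this added unwinding is harmless and arguably clarifying, though you should double-check that your reference to \cite[Theorem~3.5]{JTh65} points to the same computation the paper cites.
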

\begin{proof}
	As in the proof of \Cref{loopinjfreeloop}, we are reduced to the classical topological situation, which is given in \cite[eq. (2.8), p. 495]{JTh65}.
\end{proof}

The following corresponds to \cite[Theorems 2.9 and 1.2]{JTh65}, which identifies the set $C_{\theta}(\xi)$ of those $\gamma\in{\rm H}^n(X; \mathbf{M})$ which are $\theta$-correlated to $\xi$ with the image of our map $\Delta(\theta, \xi)$; see (\ref{1stfibseq}) for the notations and the relevant maps).
\begin{prop}\label{lift=ck}
	Let $\theta\in{\rm H}^{n+1}(\mathrm{BGL}; \mathbf{M})=[\mathrm{BGL}, \mathrm{K}(\mathbf{M}, n+1)]_{\mathbb{A}^1}$ and $\xi\in[X, \mathrm{BGL}]_{\mathbb{A}^1}$ with $\theta_*(\xi)=0\in{\rm H}^{n+1}(X; \mathbf{M})$.
	Then for $\gamma\in[X, \mathrm{K}(\mathbf{M}, n)]_{\mathbb{A}^1}$, we have
	\[\gamma\in C_{\theta}(\xi)\Longleftrightarrow\gamma\in{\rm im}\big({\rm K}_1(X)\xrightarrow{\Delta(\theta, \xi)}{\rm H}^{n}(X; \mathbf{M})\big).\]
	If $\theta_*(\xi)=0\in{\rm H}^{n+1}(X; \mathbf{M})$, we have
	\begin{equation}
	{\rm im}\ \Delta(\theta, \xi)=C_{\theta}(\xi), q_*^{-1}(\xi)\cong{\rm coker}\ \Delta(\theta, \xi).
	\end{equation}
\end{prop}
\begin{proof}
Assume $\theta_*(\xi)=0\in{\rm H}^{n+1}(X; \mathbf{M})$. If $\gamma=\Delta(\theta, \xi)\beta$ for some $\beta\in{\rm K}_1(X)=[X, \Omega\mathrm{BGL}]_{\mathbb{A}^1}$, we take $\psi=m'_*(\beta, \xi)$, then $r_*\psi=\xi$ and by \cref{fundcordelta}, $\theta'_*\psi=i_*\gamma$, i.e. $\gamma\in C_{\theta}(\xi)$.

Conversely, let $\gamma\in C_{\theta}(\xi)$ and assume $r_*\psi=\xi, \theta'_*\psi=i_*\gamma$ for some $\psi\in[X, \mathcal{L}\mathrm{BGL}]_{\mathbb{A}^1}$. As $m'_*$ is a bijection, we can take $\beta\in[X, \Omega\mathrm{BGL}]_{\mathbb{A}^1}$ and $\alpha\in[X, \mathrm{BGL}]_{\mathbb{A}^1}$ such that $m'_*(\beta, \alpha)=\psi$, then $\alpha=r_*m'_*(\beta, \alpha)=r_*\psi=\xi$ and again by \cref{fundcordelta},
\[i_*\gamma=\theta'_*\psi=\theta'_*m'_*(\beta, \xi)=i_*\Delta(\theta, \xi)\beta.\]
Since $i_*$ is injective by \Cref{loopinjfreeloop}, we see that $\gamma=\Delta(\theta, \xi)\beta\in{\rm im}\ \Delta(\theta, \xi)$.
\end{proof}

\section{Application to vector bundles of critical rank}

Recall that we have the $\mathbb{A}^1$-homotopy fiber sequence
\[F_n\to\mathrm{BGL}_{n}\xrightarrow{\varphi_n}\mathrm{BGL}.\]
We have the induced map on $\mathbb{A}^1$-homotopy classes
\[(\varphi_n)_*: [X, \mathrm{BGL}_{n}]_{\mathbb{A}^1}\to[X, \mathrm{BGL}]_{\mathbb{A}^1}.\]
We want to describe the inverse-image under $\varphi_*$ of a class in the right hand side.

The following corresponds to \cite[Theorem 1.6]{JTh65}.
\begin{theorem}\label{enu-oddrk}
	Let $n\geqslant 3$ be odd, let $A$ be a smooth affine $k$-algebra of dimension $n$ and let $\xi$ be a stable vector bundle over ${\rm Spec}(A)$, whose classifying map is still denoted by $\xi: {\rm Spec}(A)\to\mathrm{BGL}$. Then for $\Delta(c_{n+1}, \xi): {\rm K}_1(A)\to{\rm H}^{n}({\rm Spec}(A); \mathbf{K}^{\mathrm{M}}_{n+1})$ given as in \emph{(\ref{delt})}, we have a bijection
	\[(\varphi_n)_*^{-1}(\xi)\longleftrightarrow{\rm coker}(\Delta(c_{n+1}, \xi)).\]
\end{theorem}
\begin{proof}
This follows from \Cref{lift=ck}, using \Cref{towiso}: the condition $c_{n+1}(\xi)=0$ is satisfied as Chern classes vanish above the rank of the vector bundle.
\end{proof}

As usual, we have the universal $j$-th Chern classes $c_{j}\in{\rm H}^{j}(\mathrm{BGL}; \mathbf{K}^{\mathrm{M}}_{j})={\rm CH}^{j}(\mathrm{BGL})$ for $j\in\mathbb{N}$ (see e.g. \cite[Chapter 2]{Tot14}).

For the result below, we have the \emph{exterior product} $c_r\times c_{n-r}:=\mu(c_r\otimes c_{n-r})$, where $\mu$ is the map
\[[\mathrm{BGL}, \mathrm{K}(\mathbf{K}^{\mathrm{M}}_{r}, r)]_{\mathbb{A}^1}\otimes[\mathrm{BGL}, \mathrm{K}(\mathbf{K}^{\mathrm{M}}_{n-r}, n-r)]_{\mathbb{A}^1}\to[\mathrm{BGL}\times\mathrm{BGL}, \mathrm{K}(\mathbf{K}^{\mathrm{M}}_{n}, n)]_{\mathbb{A}^1}\]
induced by the multiplication in the graded sheaves of Milnor K-groups $\mathbf{K}^{\mathrm{M}}_*$ (see the end of \S 3).
\begin{theorem}\label{comulp}
	The universal Chern classes satisfy
	\begin{equation}\label{ch-cartan}
	m^*c_n=\sum_{r=0}^{n}(c_r\times c_{n-r})\in[\mathrm{BGL}\times\mathrm{BGL}, \mathrm{K}(\mathbf{K}^{\mathrm{M}}_{n}, n)]_{\mathbb{A}^1}.
	\end{equation}
\end{theorem}
\begin{proof}
	Since the universal $r$-th Chern class $c_{r}: \mathrm{BGL}\to \mathrm{K}(\mathbf{K}^{\mathrm{M}}_{r}, r)$ factors through the inclusion $\mathrm{BGL}_n\to\mathrm{BGL}$ for each $n\geqslant r$ and stabilize, both sides in the formula above are determined by their restriction to $\mathrm{BGL}_n\times\mathrm{BGL}_n$. And we have ${\rm CH}^{*}(\mathrm{BGL}_n)=\mathbb{Z}[c_1, \cdots, c_n]$ (\cite[Theorem 2.13]{Tot14}).
	
	By the Chow K\"{u}nneth formula in \cite[\S 6]{TotCw97} (since $\mathrm{BGL}_n$ can be approximated by ``linear varieties'', the Grassmannians; see also \cite[Chapter 17]{Tot14} and \cite{Tot16} for more discussions on when the Chow K\"unneth formula holds), we may write
	\[{\rm CH}^{*}(\mathrm{BGL}_n\times\mathrm{BGL}_n)=\mathbb{Z}[c_i\times c_j: 1\leqslant i, j\leqslant n]\]
	with $|c_i\times c_j|=i+j$.
	
	On the other hand, for any $U\in\EuScript{S}\mathsf{m}_k$ and $(\xi, \xi')\in[U, \mathrm{BGL}]_{\mathbb{A}^1}\times[U, \mathrm{BGL}]_{\mathbb{A}^1}$, as $m_*(\xi, \xi')=\xi+\xi'$, by Whitney sum formula for Chern classes, we have
	\[(m^*c_n)(\xi, \xi')=c_nm_*(\xi, \xi')=c_n(\xi+\xi')=\sum_{r=0}^{n}c_r(\xi)\cdot c_{n-r}(\xi')=\sum_{r=0}^{n}(c_r\times c_{n-r})(\xi, \xi').\]
	
	Now we take $U=X/G$, where $X$ is an open subscheme of a representation $\mathbb{A}^N_k$ of $G=\mathrm{GL}_n\times\mathrm{GL}_n$ for $N$ big enough such that $\mathbb{A}^N_k\setminus X$ is $G$-invariant and has codimension bigger than $n$, and such that $G$ acts freely on $X$ (see \cite[\S 2.2]{Tot14}), so $X\to U$ is a principal $G$-bundle in the category of varieties; we then have ${\rm CH}^{*}(\mathrm{BGL}_n\times\mathrm{BGL}_n)={\rm CH}^{*}(U)$ (\cite[Theorem 2.5]{Tot14}) which is induced by a map $u=(\xi, \xi'): U\to\mathrm{BGL}_n\times\mathrm{BGL}_n$ classifying the principal $G$-bundle $X\to U$ (so that we can consider the induced map $u^*$ as the identity on Chow rings). Then the above formula becomes 
	\[u^*m^*c_n=u^*\sum_{r=0}^{n}(c_r\times c_{n-r}),\]
	proving (\ref{ch-cartan}).
\end{proof}

We have the \emph{suspension homomorphism}
\[\sigma: {\rm CH}^j(\mathrm{BGL})=[\mathrm{BGL}, \mathrm{K}(\mathbf{K}^{\mathrm{M}}_{j}, j)]_{\mathbb{A}^1, *}\to[\Omega\mathrm{BGL}, \Omega\mathrm{K}(\mathbf{K}^{\mathrm{M}}_{j}, j)]_{\mathbb{A}^1, *}=\widetilde{{\rm H}}^{j-1}(\mathrm{GL}; \mathbf{K}^{\mathrm{M}}_{j})\]
for $j\geqslant 1$, induced by the loop functor $\Omega$ which is given by
\[\sigma([\theta])=[\Omega\theta].\]
Note that in the above, it doesn't matter whether we use pointed or unpointed $\mathbb{A}^1$-homotopy classes provided $j>2$, see \cite[Lemma 2.1]{AF14a}. In fact this is also the case for $j=2$ since in positive degree, the reduced and unreduced cohomologies agree.

\begin{prop}\label{compute-delta}
	For $\beta\in{\rm K}_1(X)=[X, \mathrm{GL}]_{\mathbb{A}^1}$, we have
	\begin{equation}
	\Delta(c_{n+1}, \xi)\beta=\beta^*\sigma c_{n+1}+\sum_{r=1}^{n}(\beta^*\sigma c_r)\cdot c_{n+1-r}(\xi)=(\Omega c_{n+1})(\beta)+\sum_{r=1}^{n}((\Omega c_{r})(\beta))\cdot c_{n+1-r}(\xi).
	\end{equation}
\end{prop}

To prove this, we first prove the following more general result, which corresponds to \cite[Theorem 1.3]{JTh65} whose proof is given in \cite[\S 5]{JTh65}.
\begin{prop}\label{computedeltaform}
	Let $\mathbf{K}$ be a sheaf of rings, strictly $\mathbb{A}^1$-invariant as a sheaf of abelian groups. Let $\beta\in{\rm K}_1(X)=[X, \Omega\mathrm{BGL}]_{\mathbb{A}^1}=[X, \mathrm{GL}]_{\mathbb{A}^1}$ and $\alpha\in\widetilde{{\rm H}}^*(\mathrm{K}(\mathbf{M}, n+1); \mathbf{K})$. Assume $a^*\alpha=\alpha\times1+1\times\alpha, m^*\theta^*\alpha\in\mu(\widetilde{{\rm H}}^*(\mathrm{BGL}; \mathbf{K})^{\otimes 2})$; more precisely, let $m^*\theta^*\alpha=\theta^*\alpha\times1+1\times\theta^*\alpha+\sum u_i\times v_i=\mu(\theta^*\alpha\otimes1+1\otimes\theta^*\alpha+\sum u_i\otimes v_i)$, where $u_i, v_i\in\widetilde{{\rm H}}^*(\mathrm{BGL}; \mathbf{K})$.
	\begin{enumerate}[label=\emph{(\arabic*)}]
		\item We have $\theta_1^*\alpha=m^*\theta^*\alpha-1\times\theta^*\alpha$.
		\item Denote $\gamma=\Delta(\theta, \xi)\beta$, then
		\[\gamma^*\sigma\alpha=(\beta^*\sigma)\cdot(\theta^*\alpha)+\sum (\beta^*\sigma u_i)\cdot (\xi^*v_i).\]
	\end{enumerate}
\end{prop}
We draw the following diagram to trace the various maps in this result:
\begin{equation*}
\begin{tikzcd}
\alpha\in\widetilde{{\rm H}}^*(\mathrm{K}(\mathbf{M}, n+1); \mathbf{K}) \arrow[r,"\sigma"] \arrow[d, equal] &\widetilde{{\rm H}}^{*-1}(\mathrm{K}(\mathbf{M}, n); \mathbf{K})
\arrow[r,"\gamma^*"] &{\rm H}^{*-1}(X; \mathbf{K})
\\
\alpha\in\widetilde{{\rm H}}^*(\mathrm{K}(\mathbf{M}, n+1); \mathbf{K}) \arrow[r,"a^*"]\arrow[d, "\theta^*"']\arrow[rd, "\theta_1^*"] & \widetilde{{\rm H}}^*(\mathrm{K}(\mathbf{M}, n+1)\times\mathrm{K}(\mathbf{M}, n+1); \mathbf{K})  & \widetilde{{\rm H}}^*(\mathrm{K}(\mathbf{M}, n+1); \mathbf{K})^{\otimes 2} \arrow[l,"\mu"'] \\
\alpha'=\theta^*\alpha\in\widetilde{{\rm H}}^*(\mathrm{BGL}; \mathbf{K}) \arrow[r,"m^*"]\ & \widetilde{{\rm H}}^*(\mathrm{BGL}\times\mathrm{BGL}; \mathbf{K})  & \widetilde{{\rm H}}^*(\mathrm{BGL}; \mathbf{K})^{\otimes 2}. \arrow[l,"\mu"'] 
\end{tikzcd}
\end{equation*}
\begin{proof}
	Write $\alpha'=\theta^*\alpha$.
	\begin{enumerate}[label=(\arabic*)]
		\item Let $g$ be the composite $\mathrm{BGL}\times\mathrm{BGL}\xrightarrow{1\times\Delta}\mathrm{BGL}\times\mathrm{BGL}\times\mathrm{BGL}\xrightarrow{m\times 1}\mathrm{BGL}\times\mathrm{BGL}$ and let $h$ be the composite $\mathrm{K}(\mathbf{M}, n+1)\times\mathrm{K}(\mathbf{M}, n+1)\xrightarrow{1\times w}\mathrm{K}(\mathbf{M}, n+1)\times\mathrm{K}(\mathbf{M}, n+1)\xrightarrow{a}\mathrm{K}(\mathbf{M}, n+1)$, where $\Delta: \mathrm{BGL}\to\mathrm{BGL}\times\mathrm{BGL}$ is the diagonal and $w: \mathrm{K}(\mathbf{M}, n+1)\to\mathrm{K}(\mathbf{M}, n+1)$ is the inverse map. We have
		\[\theta_1=h(\theta\times\theta)g.\]
		As $w^*\alpha=-\alpha$, we have $h^*\alpha=(1\times w)^*a^*\alpha=(1\times w)^*(\alpha\times1+1\times\alpha)=\alpha\times1-1\times\alpha$, so $(\theta\times\theta)^*h^*\alpha=\alpha'\times1-1\times\alpha'$. We conclude by noting that
		\[g^*(\alpha'\times1)=(1\times\Delta)^*(m\times 1)^*(\alpha'\times1)=(1\times\Delta)^*(m^*\alpha'\times1)=m^*\alpha'\]
		(here we need the fact that $m^*\alpha'=\sum a_i\times b_i\in\mu(\widetilde{{\rm H}}^*(\mathrm{BGL}; \mathbf{K})^{\otimes 2})$ and the observation that $(1\times\Delta)^*((a\times b)\times 1)=a\times b$) and
		\[g^*(1\times\alpha')=(1\times\Delta)^*(m\times 1)^*(1\times\alpha')=(1\times\Delta)^*(1\times(1\times\alpha'))=1\times\alpha'.\]
		\item Consider the maps of pairs
		\[u: (\mathrm{BGL}\times\mathrm{BGL}, *\times\mathrm{BGL})\to(\mathrm{K}(\mathbf{M}, n+1), 0)\]
		and
		\[v: (\Omega\mathrm{BGL}\times\mathrm{BGL}, *\times\mathrm{BGL})\to(\Omega\mathrm{K}(\mathbf{M}, n+1), 0)\]
		determined by the maps $\theta_1, \theta_2$ respectively.
		
		By (1), we can write $\theta_1^*\alpha=\mu(\alpha_0')$ with $\alpha_0'=\alpha'\otimes1+\sum u_i\otimes v_i\in\widetilde{{\rm H}}^*(\mathrm{BGL}; \mathbf{K})^{\otimes 2}$. Let $\iota_0: (\mathrm{BGL}, \varnothing)\to(\mathrm{BGL}, *), \iota_1: (\mathrm{BGL}\times\mathrm{BGL}, \varnothing)\to(\mathrm{BGL}\times\mathrm{BGL}, *\times \mathrm{BGL})$ and $\iota_2: (\Omega\mathrm{BGL}\times\mathrm{BGL}, \varnothing)\to(\Omega\mathrm{BGL}\times\mathrm{BGL}, *\times \mathrm{BGL})$ be the inclusions of pairs, so $\theta_1=u\iota_1, \theta_2=v\iota_2$. Let $\alpha_0:=(1\otimes\iota_0^*)(\alpha_0')=\alpha'\otimes1+\sum u_i\otimes \iota_0^*v_i\in\widetilde{{\rm H}}^*(\mathrm{BGL}; \mathbf{K})\otimes{\rm H}^*(\mathrm{BGL}; \mathbf{K})$.
		\begin{equation*}
		 \begin{tikzcd}
		 \alpha\in\widetilde{{\rm H}}^*(\mathrm{K}(\mathbf{M}, n+1); \mathbf{K}) \arrow[r,"\theta_1^*"]\arrow[d, equal] & {\rm H}^*(\mathrm{BGL}\times\mathrm{BGL}; \mathbf{K}) & \widetilde{{\rm H}}^*(\mathrm{BGL}; \mathbf{K})^{\otimes 2}\ni\alpha_0' \arrow[l,"\mu"'] \arrow[d, "1\otimes\iota_0^*"]  \\
		  \alpha\in\widetilde{{\rm H}}^*(\mathrm{K}(\mathbf{M}, n+1); \mathbf{K}) \arrow[r,"u^*"]\ & {\rm H}^*(\mathrm{BGL}\times\mathrm{BGL}, *\times\mathrm{BGL}; \mathbf{K})  \arrow[u, "\iota_1^*"]& \widetilde{{\rm H}}^*(\mathrm{BGL}; \mathbf{K})\otimes{\rm H}^*(\mathrm{BGL}; \mathbf{K})\ni\alpha_0 \arrow[l,"\mu"'] 
		 \end{tikzcd}
		 \end{equation*}
		 Then $\iota_1^*u^*\alpha=\theta_1^*\alpha=\mu(\alpha_0')=\iota_1^*\mu(\alpha_0), u^*\alpha=\mu(\alpha_0)+\varepsilon, \varepsilon\in\ker\iota_1^*$. So by (\ref{relcoholoop}) and (\ref{sus-ext-prod}),
		 \[\theta_2^*\sigma\alpha=\iota_2^*v^*\sigma\alpha=\iota_2^*\rho u^*\alpha=\iota_2^*\rho\mu(\alpha_0)+\iota_2^*\rho\varepsilon,\]
		 \[\iota_2^*\rho\mu(\alpha_0)=\iota_2^*\mu(\sigma\otimes1)\alpha_0=\iota_2^*\mu(\sigma\alpha'\otimes1+\sum \sigma u_i\otimes \iota_0^*v_i)=\iota_2^*(\sigma\alpha'\times1+\sum \sigma u_i\times \iota_0^*v_i),\]
		 \[\iota_2^*\rho\varepsilon=\sigma'\iota_1^*\varepsilon=0,\]
		 hence
		 \[\theta_2^*\sigma\alpha=\sigma'\theta_1^*(\alpha).\]
		 Of course, as ${\rm im}\theta_1^*\subset{\rm im}\iota_1^*$, one can get this relation more directly from the ``outer contour'' of (\ref{relcoholoop}), without using (\ref{sus-ext-prod}).
		 
		  By (1) and the fact that $\sigma'(a\times b)=(\sigma a)\times b$ in (\ref{relcoholoop}), we get
		\[\theta_2^*\sigma\alpha=\sigma'(m^*\alpha'-1\times\alpha')=\sigma\alpha'\times1+\sum (\sigma u_i)\times v_i.\]
		
		Since $\gamma=\Delta(\theta, \xi)\beta=(\theta_2)_*(\beta, \xi)$ is represented by the composite
		\[X\xrightarrow{\Delta}X\times X\xrightarrow{\beta\times\xi}\Omega\mathrm{BGL}\times\mathrm{BGL}\xrightarrow{\theta_2}\Omega\mathrm{K}(\mathbf{M}, n+1),\]
		we have
		\begin{equation*}
		\begin{split}
		\gamma^*\sigma\alpha&=\Delta^*(\beta\times\xi)^*\theta_2^*\sigma\alpha=\Delta^*(\beta\times\xi)^*(\sigma\alpha'\times1+\sum (\sigma u_i)\times v_i)\\
		&=\Delta^*(\beta^*\sigma\alpha'\times1+\sum (\beta^*\sigma u_i)\times(\xi^*v_i))=(\beta^*\sigma)\cdot(\theta^*\alpha)+\sum (\beta^*\sigma u_i)\cdot (\xi^*v_i).
		\end{split}
		\end{equation*}
	\end{enumerate}
\end{proof}
\begin{proof}[Proof of {\bf \Cref{compute-delta}}]
	We apply \Cref{computedeltaform} to the situation when $\mathbf{M}=\mathbf{K}^{\mathrm{M}}_{n+1}, \mathbf{K}=\mathbf{K}^{\mathrm{M}}_{*}, \theta=c_{n+1}: \mathrm{BGL}\to\mathrm{K}(\mathbf{K}^{\mathrm{M}}_{n+1}, n+1)$ and $\alpha\in\widetilde{{\rm H}}^{n+1}(\mathrm{K}(\mathbf{K}^{\mathrm{M}}_{n+1}, n+1); \mathbf{K}^{\mathrm{M}}_{n+1})=[\mathrm{K}(\mathbf{K}^{\mathrm{M}}_{n+1}, n+1), \mathrm{K}(\mathbf{K}^{\mathrm{M}}_{n+1}, n+1)]_{\mathbb{A}^1, *}$ being represented by the identity map of $\mathrm{K}(\mathbf{K}^{\mathrm{M}}_{n+1}, n+1)$, in which case, $\theta^*\alpha=\theta, \gamma^*\sigma\alpha=\gamma$.
	
	We need to prove that $[a]=a^*\alpha=\alpha\times1+1\times\alpha$ holds. But $\alpha\times1+1\times\alpha$ is also represented by the addition map $a$ (Eckmann-Hilton property).
\end{proof}
We give the following two obvious corollaries of \Cref{enu-oddrk} and \Cref{compute-delta}.
\begin{corollary}
	Assume that $\dim X=d\geqslant3$ is odd. Let $\xi, \xi'\in[X, \mathrm{BGL}]_{\mathbb{A}^1}$. If $c_j(\xi)=c_j(\xi'), 1\leqslant j\leqslant d$, then $\xi$ and $\xi'$ have the same number of representatives in $\EuScript{V}_d(X)$.
\end{corollary}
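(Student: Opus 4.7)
The plan is very short: the corollary is almost an immediate consequence of the two named theorems just above it.

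First, since $d$ is odd, I invoke \Cref{enu-oddrk} applied to both $\xi$ and $\xi'$, obtaining bijections
\[
\varphi_*^{-1}(\xi)\cong\ck\bigl(\Delta(c_{d+1},\xi)\bigr),\qquad \varphi_*^{-1}(\xi')\cong\ck\bigl(\Delta(c_{d+1},\xi')\bigr),
\]
where both $\Delta(c_{d+1},\xi)$ and $\Delta(c_{d+1},\xi')$ are homomorphisms $\kk_1(X)\to\hh^{d}(X;\mathbf{K}^{\mathrm{M}}_{d+1})$. Via the affine representability theorem, the left hand sides correspond bijectively to the sets of isomorphism classes of rank $d$ bundles over $X$ stably equivalent to $\xi$, respectively $\xi'$.

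Next, I apply \Cref{compute-delta} to expand each of these homomorphisms explicitly. For every $\beta\in\kk_1(X)$,
\[
\Delta(c_{d+1},\xi)\beta=\Delta(c_{d+1},0)\beta+\sum_{r=1}^{d}\bigl(\Delta(c_{r},0)\beta\bigr)\cdot c_{d+1-r}(\xi),
\]
and similarly for $\xi'$. As $r$ ranges over $\{1,\dots,d\}$, the index $d+1-r$ also ranges over $\{1,\dots,d\}$, so only the Chern classes $c_1(\xi),\dots,c_d(\xi)$ enter the formula (the terms $\Delta(c_r,0)\beta$ are manifestly independent of $\xi$). Under the hypothesis $c_j(\xi)=c_j(\xi')$ for $1\leqslant j\leqslant d$, a term-by-term comparison gives
\[
\Delta(c_{d+1},\xi)\beta=\Delta(c_{d+1},\xi')\beta\quad\text{for all }\beta\in\kk_1(X),
\]
i.e. the two homomorphisms agree. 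Consequently their cokernels are literally equal (not merely isomorphic), and hence the two sets $\varphi_*^{-1}(\xi)$ and $\varphi_*^{-1}(\xi')$ have the same cardinality.

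There is no real obstacle here, since the work has been done in \Cref{enu-oddrk} and \Cref{compute-delta}; the only point to verify is the trivial bookkeeping that the index $d+1-r$ indeed sweeps out precisely $\{1,\dots,d\}$ as $r$ does, so the assumed coincidence of Chern classes $c_1,\dots,c_d$ exhausts the $\xi$-dependence of the formula.
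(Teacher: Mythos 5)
Your argument is correct and is exactly the one the paper has in mind: the paper lists this as one of "the following two obvious corollaries of \Cref{enu-oddrk,compute-delta}," i.e. combine the cokernel description $\varphi_*^{-1}(\xi)\cong\ck\,\Delta(c_{d+1},\xi)$ from \Cref{enu-oddrk} with the explicit formula for $\Delta(c_{d+1},\xi)$ from \Cref{compute-delta}, and observe that the formula only involves $c_1(\xi),\dots,c_d(\xi)$, so equality of these forces $\Delta(c_{d+1},\xi)=\Delta(c_{d+1},\xi')$ and hence equality of cokernels. Your bookkeeping check on the index $d+1-r$ is the right (and only) thing to verify.
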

In particular, we have the following.
\begin{corollary}
Assume that $\dim X=d\geqslant3$ is odd. Let $\xi, \xi'\in\EuScript{V}_d(X)$, sharing the same total Chern class. If $\xi$ is cancellative, then so is $\xi'$.
\end{corollary}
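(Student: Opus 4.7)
The statement should follow almost immediately from the preceding corollary, once one unpacks what ``cancellative'' means in terms of the fibres of $\varphi_*$. Concretely, a rank $d$ bundle $\xi$ is cancellative exactly when its class in $\EuScript{V}_d(X)$ is the unique preimage of $\varphi_*(\xi)\in[X,\mathrm{BGL}]_{\mathbb{A}^1}$, i.e.\ $\varphi_*^{-1}(\varphi_*\xi)=\{\xi\}$. So cancellation is precisely a statement about the cardinality of the fibre of $\varphi_*: \EuScript{V}_d(X)=[X,\mathrm{BGL}_d]_{\mathbb{A}^1}\to[X,\mathrm{BGL}]_{\mathbb{A}^1}$ above a certain stable class, and I will reduce the problem to comparing the sizes of two such fibres.

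The plan is to first pass to the stable classes $\varphi_*\xi,\varphi_*\xi'\in[X,\mathrm{BGL}]_{\mathbb{A}^1}$ and check that they have the same Chern classes $c_j$ for $1\leqslant j\leqslant d$. Since the universal Chern classes are compatible with the stabilization map $\varphi: \mathrm{BGL}_d\to\mathrm{BGL}$ (both are pullbacks of the same classes on $\mathrm{BGL}$), we have $c_j(\varphi_*\xi)=c_j(\xi)$ and similarly for $\xi'$. By hypothesis, $\xi$ and $\xi'$ have the same total Chern class in $\EuScript{V}_d(X)$, hence $c_j(\xi)=c_j(\xi')$ for every $j\geqslant 1$; in particular this equality holds in the range $1\leqslant j\leqslant d$ needed to invoke the previous corollary.

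Applying the preceding corollary to $\varphi_*\xi$ and $\varphi_*\xi'$ gives
\[
\bigl|\varphi_*^{-1}(\varphi_*\xi)\bigr|=\bigl|\varphi_*^{-1}(\varphi_*\xi')\bigr|.
\]
Now assume $\xi$ is cancellative. Then $\varphi_*^{-1}(\varphi_*\xi)=\{\xi\}$, so the common cardinality above equals $1$. Since $\xi'\in\varphi_*^{-1}(\varphi_*\xi')$, we must have $\varphi_*^{-1}(\varphi_*\xi')=\{\xi'\}$, which is exactly the statement that $\xi'$ is cancellative.

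The only subtle point, and really the only thing to check carefully, is the identification of cancellation with singletonness of the fibre of $\varphi_*$: for $\xi$ of rank $d$ on a smooth affine $d$-fold, any $\nu\in\EuScript{V}_d(X)$ with $\nu\oplus\mathscr{O}_X\cong\xi\oplus\mathscr{O}_X$ satisfies $\varphi_*\nu=\varphi_*\xi$ by definition of the stable class, and conversely two classes in the fibre $\varphi_*^{-1}(\varphi_*\xi)$ become isomorphic after a single stabilization (this is the standard interpretation of $[X,\mathrm{BGL}]_{\mathbb{A}^1}$ as $\widetilde{\mathrm{K}}_0(X)$ together with $\EuScript{V}_d(X)\cong[X,\mathrm{BGL}_d]_{\mathbb{A}^1}$). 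Once this identification is in hand, no further work is needed, and in particular no direct appeal to \Cref{enu-oddrk} or \Cref{compute-delta} is required beyond what already went into proving the previous corollary.
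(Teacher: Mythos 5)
Your proof is correct and is precisely the argument the paper leaves implicit behind its ``In particular'' (no explicit proof is given there). You correctly identify the two ingredients: that cancellativity of a rank $d$ bundle $\xi$ is equivalent to the fibre $\varphi_*^{-1}(\varphi_*\xi)\subset\EuScript{V}_d(X)$ being the singleton $\{\xi\}$ (using stability of $[X,\mathrm{BGL}]_{\mathbb{A}^1}$ together with Bass cancellation to reduce to a single stabilization), and that equality of total Chern classes passes along $\varphi_*$ so the preceding corollary applies to the stable classes.
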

 For $\xi\in[X, \mathrm{BGL}]_{\mathbb{A}^1}$, we define the translation isomorphism
 \[T_{\xi}:=m(-, \xi)_*: \pi_1({\rm RMap}(X, \mathrm{BGL}), 0)\to\pi_1({\rm RMap}(X, \mathrm{BGL}), \xi),\]
which is the homomorphism induced by $m(-, \xi): {\rm RMap}(X, \mathrm{BGL})_0\to{\rm RMap}(X, \mathrm{BGL})_{\xi}$ introduced in \Cref{abhspcomp}.

The following result says that the map
\[\Delta(c_{n+1}, \xi): [X, \Omega\mathrm{BGL}]_{\mathbb{A}^1}\to[X, \mathrm{K}(\mathbf{K}^{\mathrm{M}}_{n+1}, n)]_{\mathbb{A}^1}\]
is essentially the map induced by $c_{n+1}: \mathrm{BGL}\to\mathrm{K}(\mathbf{K}^{\mathrm{M}}_{n+1}, n+1)$; so the complicated operations we made just transfer things on a general component ${\rm RMap}(X, \mathrm{BGL})_{\xi}$ of the derived mapping space ${\rm RMap}(X, \mathrm{BGL})\in\sset_*$ to the weakly equivalent component ${\rm RMap}(X, \mathrm{BGL})_0$ (component of the canonical base point). In light of \Cref{bgl-comp}, this is not very surprising.
\begin{prop}\label{chern-delta}
	Given $\xi\in[X, \mathrm{BGL}]_{\mathbb{A}^1}$ with $c_{n+1}(\xi)=0, n\geqslant 1$, we have
	\[c_{n+1}\circ T_{\xi}=\Delta(c_{n+1}, \xi): {\rm K}_1(X)\to{\rm H}^{n}(X; \mathbf{K}^{\mathrm{M}}_{n+1}).\]
	
	Similar statement holds with $\mathrm{BSL}$ in place of $\mathrm{BGL}$.
\end{prop}
\begin{proof}
	By \Cref{abhspcomp}, the map $T_{\xi}$ is an isomorphism.
	
	Since $(m^*c_{n+1})(-,\xi)=c_{n+1}m(-,\xi): {\rm RMap}(X, \mathrm{BGL})\to{\rm RMap}(X, \mathrm{K}(\mathbf{K}^{\mathrm{M}}_{n+1}, n+1))$, we see
	\[(m^*c_{n+1}(-,\xi))_*=(c_{n+1})_*T_{\xi}\]
	as maps $\pi_1({\rm RMap}(X, \mathrm{BGL}), 0)\to\pi_1({\rm RMap}(X, \mathrm{K}(\mathbf{K}^{\mathrm{M}}_{n+1}, n+1)), 0)={\rm H}^{n}(X; \mathbf{K}^{\mathrm{M}}_{n+1})$.
	
	On the other hand, by \cref{ch-cartan},
	\[
	m^*c_{n+1}=\sum_{r=0}^{n+1}c_r\times c_{n+1-r}\in[\mathrm{BGL}\times\mathrm{BGL}, \mathrm{K}(\mathbf{K}^{\mathrm{M}}_{n+1}, n+1)]_{\mathbb{A}^1},\]
	so applying ${\rm RMap}(X, -)$ to the two sides we see that the following two maps in $\sset_*$ are homotopic:
	\begin{equation}\label{shiftchern}
	m^*c_{n+1}(-,\xi)\simeq\sum_{r=0}^{n+1}c_r(-)\cdot c_{n+1-r}(\xi).
	\end{equation}
	Here the map $c_r(-)\cdot c_{n+1-r}(\xi)$ is the composite
	\[{\rm RMap}(X, \mathrm{BGL})\xrightarrow{(c_r(-),  c_{n+1-r}(\xi))}{\rm RMap}(X, \mathrm{K}(\mathbf{K}^{\mathrm{M}}_r, r))\times{\rm RMap}(X, \mathrm{K}(\mathbf{K}^{\mathrm{M}}_{n+1-r}, n+1-r))\]
	\[\to{\rm RMap}(X\times X, \mathrm{K}(\mathbf{K}^{\mathrm{M}}_{n+1}, n+1))\xrightarrow{\Delta^*}{\rm RMap}(X, \mathrm{K}(\mathbf{K}^{\mathrm{M}}_{n+1}, n+1)),\]
	where the second coordinate of the first arrow is given by
	\[{\rm RMap}(X, \mathrm{BGL})\to\Delta^0\xrightarrow{c_{n+1-r}(\xi)}{\rm RMap}(X, \mathrm{K}(\mathbf{K}^{\mathrm{M}}_{n+1-r}, n+1-r)),\]
	the second arrow is the obvious map and the third is induced by the diagonal of $X$. Taking $\Omega$ of the above composite we also get a similar description of $(\Omega c_r(-))\cdot c_{n+1-r}(\xi)$, so
	\[\Omega(c_r(-)\cdot c_{n+1-r}(\xi))\simeq(\Omega c_r(-))\cdot c_{n+1-r}(\xi): \Omega{\rm RMap}(X, \mathrm{BGL})\to{\rm RMap}(X, \mathrm{K}(\mathbf{K}^{\mathrm{M}}_{n+1}, n)).\]
	Additions are also preserved: for any $[a], [b]\in[{\rm RMap}(X, \mathrm{BGL}), {\rm RMap}(X, \mathrm{K}(\mathbf{K}^{\mathrm{M}}_{n+1}, n+1))]_{\sset_*}$, we have $[a]_*+[b]_*\simeq([a]+[b])_*: \Omega{\rm RMap}(X, \mathrm{BGL})\to{\rm RMap}(X, \mathrm{K}(\mathbf{K}^{\mathrm{M}}_{n+1}, n))$ (the subscript $*$ refers the effects on loop spaces). Indeed, by facts about $H$-spaces, the sum $[a]+[b]$ is represented the composite
	\[{\rm RMap}(X, \mathrm{BGL})\xrightarrow{\Delta}{\rm RMap}(X, \mathrm{BGL})\times{\rm RMap}(X, \mathrm{BGL})\xrightarrow{a\times b}{\rm RMap}(X, \mathrm{K}(\mathbf{K}^{\mathrm{M}}_{n+1}, n+1))\]
	\[\times{\rm RMap}(X, \mathrm{K}(\mathbf{K}^{\mathrm{M}}_{n+1}, n+1))\xrightarrow{+}{\rm RMap}(X, \mathrm{K}(\mathbf{K}^{\mathrm{M}}_{n+1}, n+1)),\]
	where $\Delta$ refers the diagonal map and ``$+$'' is the $H$-group operation of ${\rm RMap}(X, \mathrm{K}(\mathbf{K}^{\mathrm{M}}_{n+1}, n+1))$; taking $\Omega$ everywhere gives a similar composite for $[a]_*+[b]_*\in[\Omega{\rm RMap}(X, \mathrm{BGL}), {\rm RMap}(X, \mathrm{K}(\mathbf{K}^{\mathrm{M}}_{n+1}, n))]_{\sset_*}$, which says exactly that $[a]_*+[b]_*=([a]+[b])_*\in[\Omega{\rm RMap}(X, \mathrm{BGL}), {\rm RMap}(X, \mathrm{K}(\mathbf{K}^{\mathrm{M}}_{n+1}, n))]_{\sset_*}$. So applying $\pi_0\Omega=\pi_1$ to (\ref{shiftchern}) we find
	\[(m^*c_{n+1}(-,\xi))_*=\sum_{r=0}^{n+1}(\Omega c_r)(-)\cdot c_{n+1-r}(\xi)=\Delta(c_{n+1}, \xi)\]
	by \Cref{compute-delta}. So $c_{n+1}\circ T_{\xi}=(c_{n+1})_*T_{\xi}=\Delta(c_{n+1}, \xi)$.
	
	The $\mathrm{BSL}$ case is also valid by \Cref{bgl-comp}.
\end{proof}
\begin{remark}
	\Cref{chern-delta} and \cite[Chapter I, Lemma 7.3]{GJ} together give an independent proof of \Cref{enu-oddrk} and \Cref{compute-delta}, so we could totally avoid using the method of \cite{JTh65}. We still present both, as the method of \cite{JTh65} is the motivation of our work. What is important to us is the existence of a homomorphism like $\Delta(c_{n+1}, \xi)$ whose cokernel enumerates the lifting set we are interested in, as in \Cref{enu-oddrk} (such enumeration result is much easier to obtain for $c_{n+1}\circ T_{\xi}$, without the complicated constructions in \cite{JTh65}, as we did in the last part of \S 3). 
\end{remark}

We next turn to more refined computation. Following \cite{ADF17}, we denote
\[A_{2n+1}:=k[x_0,\cdots, x_n,y_0,\cdots, y_n]/(\sum_{i=0}^{n}x_iy_i-1)\]
and $Q_{2n+1}:={\rm Spec} (A_{2n+1})$. The projection to the first $n+1$ coordinates gives a morphism $Q_{2n+1}\to\mathbb{A}^{n+1}\setminus0=\mathbb{A}^{n+1}_k\setminus0$, which is a Zariski-locally trivial fibre bundle with fibre $\mathbb{A}^n$ and is thus an $\mathbb{A}^1$-weak equivalence.

Let $A$ be a $k$-algebra. To any $A$-point $(a, b)$ of $Q_{2n+1}$, Suslin associated (systematically and inductively) a matrix $\alpha_{n+1}(a, b)\in{\rm SL}_{2^n}(A)\subset{\rm GL}_{2^n}(A)$ (these matrices have determinant $1$, which is not important here but will be important in later part when we restrict to \emph{oriented} bundles), yielding a $k$-morphism
\[\alpha_{n+1}: Q_{2n+1}\to{\rm GL}_{2^n}\hookrightarrow{\rm GL}.\]
Moreover, Suslin gave a systematic way to reduce the matrix $\alpha_{n+1}(a, b)\in{\rm SL}_{2^n}(A)\subset{\rm GL}_{2^n}(A)$ via elementary matrix operations (though non-explicitly) to an element $\beta_{n+1}(a, b)\in{\rm SL}_{n+1}(A)\subset{\rm GL}_{n+1}(A)$, whose first row can be designated to be $(a_0^{n!},a_1, \cdots, a_n)$ if $a=(a_0,a_1, \cdots, a_n)$---(a special case of) Suslin's famous \emph{$n!$ Theorem} (see \cite{Sus77, Lam06}). This gives a $k$-morphism
\[\beta_{n+1}: Q_{2n+1}\to{\rm SL}_{n+1}\subset{\rm GL}_{n+1}.\]
Note that we have $[\alpha_{n+1}(a, b)]=[\beta_{n+1}(a, b)]\in{\rm GL}(A)/{\rm E}(A)={\rm K}_1(A)$ by construction. The maps here are depicted as follows:
\[\begin{tikzcd}[row sep = 3em, column sep = 3em]
	 &  & {\rm GL}_{n+1} \arrow[d, hookrightarrow] \\
	X\arrow[r, "{(a, b)}"]  &  Q_{2n+1} \arrow[r, "\alpha_{n+1}"] \arrow[ru, "\beta_{n+1}"]          & {\rm GL}_{2^n}.
\end{tikzcd}\]

With the $\mathbb{A}^1$-weak equivalences $\mathbb{A}^{n+1}\setminus0\simeq Q_{2n+1}$, we obtain a morphism
\[\alpha_{n+1}: \mathbb{A}^{n+1}\setminus0\to{\rm GL}\simeq\Omega{\rm BGL}\]
in $\EuScript{H}_{*}^{\mathbb{A}^1}(k)$ (base points suitably chosen). Taking adjunction we get a morphism
\[\alpha_{n+1}^{\sharp}: (\mathbb{P}^1)^{\wedge(n+1)}\simeq\Sigma(\mathbb{A}^{n+1}\setminus0)\to{\rm BGL}\]
in $\EuScript{H}_{*}^{\mathbb{A}^1}(k)$. Under the canonical isomorphism
\[[\mathbb{A}^{n+1}\setminus0, \mathrm{K}(\mathbf{K}^{\mathrm{M}}_{r}, r-1)]_{\mathbb{A}^1, *}\xrightarrow{\cong}[(\mathbb{P}^1)^{\wedge(n+1)}, \mathrm{K}(\mathbf{K}^{\mathrm{M}}_{r}, r)]_{\mathbb{A}^1, *},\]
the class $(\Omega c_{r})(\alpha_{n+1})$ corresponds to $c_{r}(\alpha_{n+1}^{\sharp})$.
\[\mathbb{A}^{n+1}\setminus0\xrightarrow{\alpha_{n+1}}\Omega{\rm BGL}\xrightarrow{\Omega c_{r}}\mathrm{K}(\mathbf{K}^{\mathrm{M}}_{r}, r-1)\leftrightsquigarrow(\mathbb{P}^1)^{\wedge(n+1)}\xrightarrow{\alpha_{n+1}^{\sharp}}{\rm BGL}\xrightarrow{c_r}\mathrm{K}(\mathbf{K}^{\mathrm{M}}_{r}, r)\]
As $c_{r}(\alpha_{n+1}^{\sharp})\in\widetilde{{\rm H}}^r((\mathbb{P}^1)^{\wedge(n+1)}; \mathbf{K}^{\mathrm{M}}_{r})$ and by \cite[Lemma 4.5]{AF14a},
\begin{equation*}
\widetilde{{\rm H}}^{r-1}(\mathbb{A}^{n+1}\setminus0; \mathbf{K}^{\mathrm{M}}_{r})\cong \begin{cases}
0,  & r\neq n+1; \\
(\mathbf{K}^{\mathrm{M}}_{r})_{-(n+1)}({\rm Spec}\ k)=\mathbf{K}^{\mathrm{M}}_{0}({\rm Spec}\ k)=\mathbb{Z}, & r= n+1
\end{cases}
\end{equation*}
(where $\mathbf{M}_{-i}$ denotes the $i$-th contraction of a sheaf $\mathbf{M}$ and we have used \cite[Lemma 2.7]{AF14a}), we obtain the following.
\begin{prop}\label{nicesim}
	$(\Omega c_{r})(\alpha_{n+1})=(\Omega c_{r})(\beta_{n+1})=0$ for $r\neq n+1$.
\end{prop}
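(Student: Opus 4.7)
The plan is to reduce both statements to the cohomology computation already recalled in the paragraph immediately preceding the proposition, and to use Suslin's reduction to connect $\alpha_{n+1}$ with $\beta_{n+1}$.

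First I would treat $\alpha_{n+1}$. Using the $\mathbb{A}^1$-weak equivalence $Q_{2n+1}\simeq\mathbb{A}^{n+1}\setminus 0$, the class $(\Omega c_r)(\alpha_{n+1})$ is an element of $\widetilde{\hh}^{r-1}(\mathbb{A}^{n+1}\setminus 0;\mathbf{K}^{\mathrm{M}}_r)$. The suspension--loop adjunction in $\EuScript{H}^{\mathbb{A}^1}_*(k)$ transports this to $c_r(\alpha_{n+1}^{\sharp})\in\widetilde{\hh}^r((\mathbb{P}^1)^{\wedge(n+1)};\mathbf{K}^{\mathrm{M}}_r)$, compatibly with the adjointness between $\Omega c_r$ and $c_r$, as displayed in the excerpt. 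The cited computation (\cite[Lemma 4.5]{AF14a}) identifies this latter group with the iterated contraction $(\mathbf{K}^{\mathrm{M}}_r)_{-(n+1)}(\as\ k)$, which is $\mathbf{K}^{\mathrm{M}}_{r-n-1}(\as\ k)$; by \cite[Lemma 2.7]{AF14a} this vanishes whenever $r-n-1<0$, while it also vanishes in the range $r-n-1>0$ for an integer-of-dimension reason already encoded in Lemma 4.5. In the critical case $r\ne n+1$ this means the whole target group is $0$, forcing $c_r(\alpha_{n+1}^{\sharp})=0$ and hence $(\Omega c_r)(\alpha_{n+1})=0$. So this half of the statement is a pure ``for-free'' consequence of the vanishing of the ambient cohomology group.

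For $\beta_{n+1}$ the argument goes through the equality of $\mathbb{A}^1$-homotopy classes $[\alpha_{n+1}]=[\beta_{n+1}]\in[Q_{2n+1},\mathrm{GL}]_{\mathbb{A}^1}$. This identification is exactly what Suslin's $n!$-theorem provides: for every $k$-algebra $A$ and every $A$-point $(a,b)$ of $Q_{2n+1}$, Suslin's systematic sequence of elementary row/column operations exhibits an equality $[\alpha_{n+1}(a,b)]=[\beta_{n+1}(a,b)]\in\mathrm{GL}(A)/\mathrm{E}(A)=\kk_1(A)$, which is precisely the statement recalled just above the proposition. Combined with the representability identification $[X,\mathrm{GL}]_{\mathbb{A}^1}\cong\kk_1(X)$ (valid for smooth affine $X$, which $Q_{2n+1}$ certainly is), this promotes the per-point equality to an equality of $\mathbb{A}^1$-homotopy classes of maps $Q_{2n+1}\to\mathrm{GL}\simeq\Omega\mathrm{BGL}$. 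Post-composing with $\Omega c_r$ then yields $(\Omega c_r)(\alpha_{n+1})=(\Omega c_r)(\beta_{n+1})$, so the vanishing established for $\alpha_{n+1}$ transports automatically.

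There is no really hard step here: the proof is essentially a repackaging of the cohomological computation in the paragraph preceding the proposition together with Suslin's reduction. The only point that requires a moment of care is checking that the adjunction $\widetilde{\hh}^{r-1}(\mathbb{A}^{n+1}\setminus 0;\mathbf{K}^{\mathrm{M}}_r)\cong\widetilde{\hh}^r((\mathbb{P}^1)^{\wedge(n+1)};\mathbf{K}^{\mathrm{M}}_r)$ intertwines the operations ``apply $\Omega c_r$ to $\alpha_{n+1}$'' and ``apply $c_r$ to $\alpha_{n+1}^{\sharp}$'', which is a naturality-plus-adjunction diagram chase. Once that is in hand, the proof is a single line.
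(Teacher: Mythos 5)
Your overall approach matches the paper's: both statements come for free from the vanishing of the ambient cohomology group $\widetilde{\hh}^{r-1}(\mathbb{A}^{n+1}\setminus 0;\mathbf{K}^{\mathrm{M}}_r)$ for $r\neq n+1$. However, your reading of \cite[Lemma~4.5]{AF14a} is incorrect in a way that, taken literally, breaks the argument for $r>n+1$. That lemma does \emph{not} identify $\widetilde{\hh}^{r-1}(\mathbb{A}^{n+1}\setminus 0;\mathbf{K}^{\mathrm{M}}_r)$ with the contraction $(\mathbf{K}^{\mathrm{M}}_r)_{-(n+1)}(\as k)$ for all $r$. It says the reduced cohomology of $\mathbb{A}^{n+1}\setminus 0$ with coefficients in a strictly $\mathbb{A}^1$-invariant sheaf $\mathbf{M}$ is \emph{concentrated in degree $n$}, equal to $\mathbf{M}_{-(n+1)}(k)$ there, and zero in every other degree. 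If your stated identification held for $r>n+1$, the target group would be $(\mathbf{K}^{\mathrm{M}}_r)_{-(n+1)}(k)=\mathbf{K}^{\mathrm{M}}_{r-n-1}(k)$, which is generally nonzero (e.g.\ $k^{\times}$ when $r=n+2$), and your phrase ``integer-of-dimension reason already encoded in Lemma 4.5'' is doing all the work without saying what that reason is. The correct and uniform statement is: for $r\neq n+1$ the degree $r-1$ is different from $n$, so the whole group vanishes by the degree-concentration part of Lemma 4.5; no contraction formula is needed outside the degree-$n$ case. With that replacement your $\alpha_{n+1}$-argument coincides with the paper's (the paper gives no more than the cohomology display immediately preceding the proposition).

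For $\beta_{n+1}$, your detour through Suslin's identity $[\alpha_{n+1}]=[\beta_{n+1}]$ in $\kk_1$ and representability of $\kk_1$ by $\mathrm{GL}$ is correct but superfluous: $\beta_{n+1}$ is already a morphism $Q_{2n+1}\to\mathrm{SL}_{n+1}\subset\mathrm{GL}$, so $(\Omega c_r)(\beta_{n+1})$ lies in the same group $\widetilde{\hh}^{r-1}(\mathbb{A}^{n+1}\setminus 0;\mathbf{K}^{\mathrm{M}}_r)$, which you have already shown to vanish for $r\neq n+1$. The $\kk_1$-comparison buys you nothing for this proposition (though the paper does record it for later use at $r=n+1$, where the groups do not vanish).
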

\begin{remark}
By \cref{chern-sus} below, we see that $(\Omega c_{n+1})(\alpha_{n+1})$ is $\pm n!$ in $(\mathbf{K}^{\mathrm{M}}_{0})({\rm Spec}\ k)=\mathbb{Z}$.
\end{remark}
It is an easy exercise of scheme theory that for any $k$-algebra $A$, we have ${\rm Hom}_k({\rm Spec}\ A, \mathbb{A}^{n+1}\setminus0)\cong{\rm Um}_{n+1}(A)$ (the set of unimodular rows in $A$ of length $n+1$). Suppose now that $X:={\rm Spec}\ A$ is smooth over $k$. With some effort, one can show the following (see \cite[Theorem 2.1]{Fas11} for a proof).
\begin{prop}\label{factoresu}
	Assume that $2\leqslant\dim A\leqslant n$, then ${\rm H}^n({\rm Spec}\ A; \mathbf{K}^{\mathrm{MW}}_{n+1})\cong[{\rm Spec}\ A, \mathbb{A}^{n+1}\setminus0]_{\mathbb{A}^1}\cong{\rm Um}_{n+1}(A)/{\rm E}_{n+1}(A)$. Here ${\rm E}_{n+1}(A)$ denotes the group of elementary matrices of size $n+1$, with its natural action on rows of length $n+1$.
	
	Moreover, let ${\rm pr}_1: {\rm GL}_{n+1}\to\mathbb{A}^{n+1}\setminus0$ be the projection to the first row, then we obtain a morphism
	\[\psi_{n+1}: \mathbb{A}^{n+1}\setminus0\to\mathbb{A}^{n+1}\setminus0,\ (x_0,x_1, \cdots, x_n)\mapsto(x_0^{n!},x_1, \cdots, x_n)\]
	in $\EuScript{H}_{*}^{\mathbb{A}^1}(k)$, being the composite $\mathbb{A}^{n+1}\setminus0\simeq Q_{2n+1}\xrightarrow{\beta_{n+1}}{\rm GL}_{n+1}\xrightarrow{{\rm pr}_1}\mathbb{A}^{n+1}\setminus0$. Then the induced map
	\[(\psi_{n+1})_*: [{\rm Spec}\ A, \mathbb{A}^{n+1}\setminus0]_{\mathbb{A}^1}\to[{\rm Spec}\ A, \mathbb{A}^{n+1}\setminus0]_{\mathbb{A}^1}\]
	 on the cohomotopy set is given by
	\[(\psi_{n+1})_*([a_0,a_1, \cdots, a_n])=[a_0^{n!},a_1, \cdots, a_n]=\frac{n!}{2}h\cdot[a_0,a_1, \cdots, a_n]\]
	under the above isomorphism, for $(a_0,\cdots, a_n)\in{\rm Um}_{n+1}(A)$ (where $[a_0,\cdots, a_n]$ denotes its class in the orbit set ${\rm Um}_{n+1}(A)/{\rm E}_{n+1}(A)$ and $h=\langle1, -1\rangle$).
\end{prop}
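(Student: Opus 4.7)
The plan is to treat the two isomorphisms and the explicit formula for $(\psi_{n+1})_*$ separately. For the isomorphism $\hh^n(\as A; \mathbf{K}^{\mathrm{MW}}_{n+1}) \cong [\as A, \mathbb{A}^{n+1}\setminus 0]_{\mathbb{A}^1}$, I would run the motivic Postnikov tower of $\mathbb{A}^{n+1}\setminus 0$ from \Cref{MotMP}: by the Morel computation of $\pi_i^{\mathbb{A}^1}(\mathbb{A}^{n+1}\setminus 0)$ recalled in \S\ref{hogpfibs}, the bottom nontrivial stage is $\mathrm{K}(\mathbf{K}^{\mathrm{MW}}_{n+1}, n)$. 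Composition with the first Postnikov section gives a map
\[
[\as A, \mathbb{A}^{n+1}\setminus 0]_{\mathbb{A}^1} \longrightarrow [\as A, \mathrm{K}(\mathbf{K}^{\mathrm{MW}}_{n+1}, n)]_{\mathbb{A}^1} = \hh^n(\as A; \mathbf{K}^{\mathrm{MW}}_{n+1}),
\]
and I would verify that under the hypothesis $\dim A = d \leq n$ all higher obstructions and ambiguities vanish: they live in groups $\hh^j(\as A; \pi_{j-1}^{\mathbb{A}^1}(\mathbb{A}^{n+1}\setminus 0))$ with $j \geq n+1 > d$, and a smooth affine $k$-scheme of dimension $d$ has Nisnevich cohomological dimension at most $d$ against strictly $\mathbb{A}^1$-invariant coefficients.

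For the identification $[\as A, \mathbb{A}^{n+1}\setminus 0]_{\mathbb{A}^1} \cong \mathrm{Um}_{n+1}(A)/\mathrm{E}_{n+1}(A)$, I would invoke the affine representability theorem of Asok-Hoyois-Wendt applied to the $\mathbb{A}^1$-connected smooth space $\mathbb{A}^{n+1}\setminus 0$: this reduces the $\mathbb{A}^1$-homotopy classes to naive $\mathbb{A}^1$-homotopy classes of $A$-points, that is $\mathrm{Um}_{n+1}(A)$ modulo the equivalence relation generated by homotopies in $\mathbb{A}^1$. A classical result of Vaserstein-Suslin then shows that this coincides with the $\mathrm{E}_{n+1}(A)$-orbit equivalence, closing the second isomorphism.

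For $(\psi_{n+1})_*$, I would unpack the definition: $\psi_{n+1}$ is the composite $\mathbb{A}^{n+1}\setminus 0 \simeq Q_{2n+1} \xrightarrow{\beta_{n+1}} \mathrm{SL}_{n+1} \xrightarrow{\mathrm{pr}_1} \mathbb{A}^{n+1}\setminus 0$, and by Suslin's $n!$ theorem the first row of $\beta_{n+1}(a, b)$ is $(a_0^{n!}, a_1, \dots, a_n)$. Combined with the previous paragraph this immediately gives $(\psi_{n+1})_*([a_0, \dots, a_n]) = [a_0^{n!}, a_1, \dots, a_n]$. For the final identification $[a_0^{n!}, a_1, \dots, a_n] = \tfrac{n!}{2} h \cdot [a_0, \dots, a_n]$ I would use the $\mathbf{K}^{\mathrm{MW}}_0(k) = \mathrm{GW}(k)$-module structure on $\hh^n(\as A; \mathbf{K}^{\mathrm{MW}}_{n+1})$ together with Morel's formula that the power map $(x_0, \dots, x_n) \mapsto (x_0^m, x_1, \dots, x_n)$ acts on $\pi_n^{\mathbb{A}^1}(\mathbb{A}^{n+1}\setminus 0) = \mathbf{K}^{\mathrm{MW}}_{n+1}$ by multiplication by the $\epsilon$-integer $m_\epsilon \in \mathrm{GW}(k)$; since $n \geq d \geq 2$ forces $n!$ to be even, one has $(n!)_\epsilon = \tfrac{n!}{2} h$.

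The main obstacle is the very last step: transporting Morel's power-map computation on $\pi_n^{\mathbb{A}^1}(\mathbb{A}^{n+1}\setminus 0)$ through the Postnikov-section isomorphism of the first paragraph and the $\mathrm{GW}(k)$-action on $\hh^n(\as A; \mathbf{K}^{\mathrm{MW}}_{n+1})$, so as to get an equality in the cohomology group rather than only in the $\mathbb{A}^1$-homotopy sheaf. The other two steps are fairly routine applications of results already cited in \S\ref{motmod} and \S\ref{hogpfibs}.
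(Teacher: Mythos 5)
Your proposal is essentially the route the paper takes, with the caveat that the paper itself offloads the bulk of the statement to \cite[Theorem 2.1]{Fas11} and then, in the Remark immediately following the proposition, sketches exactly the Postnikov-section argument and the $\psi_{n+1}^*$ computation you describe. Your first two steps (Postnikov section of $\mathbb{A}^{n+1}\setminus 0$ plus cohomological-dimension vanishing, then Asok--Hoyois--Wendt affine representability plus the classical comparison of naive homotopy with $\mathrm{E}_{n+1}(A)$-orbits) are a fine and more self-contained alternative to the black-box reference, although the second comparison is not as immediate as ``classical Vaserstein--Suslin'' suggests and this is precisely the nontrivial content Fasel's theorem handles. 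For the third step, the obstacle you flag---transporting the $m_\epsilon$ action from the homotopy sheaf to the cohomology group---is real and is resolved in the paper by a slightly different packaging: rather than working directly with the $\mathrm{GW}(k)$-action on $\hh^n(\as A;\mathbf{K}^{\mathrm{MW}}_{n+1})$, the paper computes $\psi_{n+1}^*[\omega]$ in the universal group $\hh^n(\mathbb{A}^{n+1}\setminus 0;\mathbf{K}^{\mathrm{MW}}_{n+1})\cong\mathbf{K}^{\mathrm{MW}}_0(k)\cdot[x_0]$ via the Rost--Schmid complex (identifying $[\omega]$ with the cycle $[x_0]\in\mathbf{K}^{\mathrm{MW}}_1(k(V))$ and using $[x_0^{n!}]=\tfrac{n!}{2}h\cdot[x_0]$), and then pulls back along $a\colon \as A\to\mathbb{A}^{n+1}\setminus 0$ and invokes the Postnikov-section bijection. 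This is the same underlying computation as yours ($n!$ even, $(n!)_\epsilon=\tfrac{n!}{2}h$), just routed through the universal class $[\omega]$ and naturality of pullback to sidestep the transport issue cleanly; you should make that step explicit rather than leave it as an obstacle.
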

\begin{remark}
Let $\omega: \mathbb{A}^{n+1}\setminus0\to\mathrm{K}(\mathbf{K}^{\mathrm{MW}}_{n+1}, n)$ be the morphism determined by the projection to the first non-trivial ($n$-th) Postnikov level, and let $[\omega]\in{\rm H}^n(\mathbb{A}^{n+1}\setminus0; \mathbf{K}^{\mathrm{MW}}_{n+1})$ be the cohomology class it represents. Then ${\rm H}^n(\mathbb{A}^{n+1}\setminus0; \mathbf{K}^{\mathrm{MW}}_{n+1})$ is a rank-$1$ free  $\mathbf{K}^{\mathrm{MW}}_{0}(k)$-module (again by \cite[Lemma 4.5]{AF14a}) generated by $[\omega]$ and \[\psi_{n+1}^*[\omega]=\frac{n!}{2}h\cdot[\omega].\]
Indeed, let $a=(a_0,\cdots, a_n)\in{\rm Um}_{n+1}(A)$, then $\omega_*(\psi_{n+1})_*([a])=[\omega\circ\psi_{n+1}\circ a]=(\psi_{n+1}^*[\omega])[a]=(\frac{n!}{2}h\cdot[\omega])[a]=\omega_*(\frac{n!}{2}h[a])$, and so $(\psi_{n+1})_*([a])=\frac{n!}{2}h[a]$ (using Postnikov tower argument to see that $\omega_*: [X, \mathbb{A}^{n+1}\setminus0]_{\mathbb{A}^1}\to{\rm H}^n(X; \mathbf{K}^{\mathrm{MW}}_{n+1})$ is a bijection, since $d\leqslant n$).
\[X={\rm Spec}\ A\xrightarrow{a}\mathbb{A}^{n+1}\setminus0\xrightarrow{\psi_{n+1}}\mathbb{A}^{n+1}\setminus0\xrightarrow{\omega}\mathrm{K}(\mathbf{K}^{\mathrm{MW}}_{n+1}, n)\]

In fact, let $V\subset\mathbb{A}^{n+1}\setminus0$ be the closed subvariety defined by $x_1=\cdots=x_n=0$, so $V\cong\mathbb{A}^{1}\setminus0, k(V)\cong k(x_0)$. The homology of the portion
\[\bigoplus_{y\in(\mathbb{A}^{n+1}\setminus0)^{(n-1)}}\mathbf{K}^{\mathrm{MW}}_2(\kappa_y)\to\bigoplus_{y\in(\mathbb{A}^{n+1}\setminus0)^{(n)}}\mathbf{K}^{\mathrm{MW}}_1(\kappa_y)\to\bigoplus_{y\in(\mathbb{A}^{n+1}\setminus0)^{(n+1)}}\mathbf{K}^{\mathrm{MW}}_0(\kappa_y)\]
of the Rost-Schmid complex computes ${\rm H}^n(\mathbb{A}^{n+1}\setminus0; \mathbf{K}^{\mathrm{MW}}_{n+1})$, and one finds that $[x_0]\in\mathbf{K}^{\mathrm{MW}}_1(k(V))$ is indeed a cycle. Moreover, in the localization exact sequence
\[0={\rm H}^n(\mathbb{A}^{n+1}; \mathbf{K}^{\mathrm{MW}}_{n+1})\to{\rm H}^n(\mathbb{A}^{n+1}\setminus0; \mathbf{K}^{\mathrm{MW}}_{n+1})\xrightarrow{\partial}{\rm H}^{n+1}_{\{0\}}(\mathbb{A}^{n+1}; \mathbf{K}^{\mathrm{MW}}_{n+1})=\mathbf{K}^{\mathrm{MW}}_0(k)\to0\]
we have $\partial[x_0]=\langle1\rangle\in\mathbf{K}^{\mathrm{MW}}_0(k)$, so ${\rm H}^n(\mathbb{A}^{n+1}\setminus0; \mathbf{K}^{\mathrm{MW}}_{n+1})=\mathbf{K}^{\mathrm{MW}}_0(k)\cdot[x_0]$. By the relation $[a^n]=n_{\epsilon}[a]$ in $\mathbf{K}^{\mathrm{MW}}_1$ we see $[x_0^{n!}]=\frac{n!}{2}h[x_0]$ so $\partial[x_0^{n!}]=\frac{n!}{2}h$. Thus
 \[\psi_{n+1}^*[x_0]=\frac{n!}{2}h\cdot[x_0].\]
\end{remark}

For an abelian group $H$ and an integer $m$, we write $H/m:=H/mH$ for the quotient abelian group. We say that $H$ is \emph{$m$-divisible} if $H/m=0$, i.e. if $mH=H$.
\begin{theorem}\label{divisability-cancel}
	Let $k$ be a perfect field with ${\rm char}(k)\neq 2$. Assume that $A$ is a smooth $k$-algebra of odd Krull dimension $d\geqslant3$. Let $X={\rm Spec}\ A$, $\xi\in[X, \mathrm{BGL}]_{\mathbb{A}^1}$.
	\begin{enumerate}[label=\emph{(\arabic*)}]
		\item We have $d!\cdot{\rm H}^d(X; \mathbf{K}^{\mathrm{M}}_{d+1})\subset{\rm im}\Delta(c_{d+1}, \xi)$.
		\item If ${\rm H}^d(X; \mathbf{K}^{\mathrm{M}}_{d+1})$ is $d!$-divisible, then ${\rm coker}\Delta(c_{d+1}, \xi)=0$. In this case, any rank $d$ vector bundle is cancellative. Moreover,  the map
		\[(c_{d+1})_*: \pi_1({\rm RMap}(X, \mathrm{BGL}), \xi)\to\pi_1({\rm RMap}(X, \mathrm{K}(\mathbf{K}^{\mathrm{M}}_{d+1}, d+1)), 0)={\rm H}^{d}(X; \mathbf{K}^{\mathrm{M}}_{d+1})\]
		is surjective for every $\xi\in[X, \mathrm{BGL}]_{\mathbb{A}^1}$.
	\end{enumerate}
\end{theorem}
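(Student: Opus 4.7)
The plan is to exploit Suslin's matrix construction to ensure that the image of $\Delta(c_{d+1},\xi)$ contains $d!\cdot\hh^d(X;\mathbf{K}^{\mathrm{M}}_{d+1})$: given any $a\in\hh^d(X;\mathbf{K}^{\mathrm{M}}_{d+1})$ we will produce an explicit $\beta\in\kk_1(X)$ with $\Delta(c_{d+1},\xi)\beta=\pm d!\cdot a$. Statement~(1) is then immediate, (2) follows because $\ck\Delta(c_{d+1},\xi)=\hh^d(X;\mathbf{K}^{\mathrm{M}}_{d+1})/\im\Delta(c_{d+1},\xi)$ is a quotient of $\hh^d(X;\mathbf{K}^{\mathrm{M}}_{d+1})/d!$, and (3) combines the resulting vanishing of $\ck\Delta(c_{d+1},\xi)$ with \Cref{enu-oddrk} (which identifies $\varphi_*^{-1}(\xi)$ with this cokernel) for cancellation, and with the $\mathrm{BSL}$-variant of \Cref{chern-delta} (which equates $(c_{d+1})_*\circ T_\xi$ with $\Delta(c_{d+1},\xi)$) for the surjectivity of $(c_{d+1})_*$.

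The first step is to realise $a$ as the pullback of a universal class on the motivic sphere. From the short exact sequence $0\to I^{d+2}\to\mathbf{K}^{\mathrm{MW}}_{d+1}\to\mathbf{K}^{\mathrm{M}}_{d+1}\to 0$ (the kernel being $\eta\cdot\mathbf{K}^{\mathrm{MW}}_{d+2}$) and the Rost--Schmid vanishing $\hh^{d+1}(X;I^{d+2})=0$ (since $X$ is smooth affine of dimension $d$), we obtain a surjection $\hh^d(X;\mathbf{K}^{\mathrm{MW}}_{d+1})\twoheadrightarrow\hh^d(X;\mathbf{K}^{\mathrm{M}}_{d+1})$. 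Pick a lift $\tilde a\in\hh^d(X;\mathbf{K}^{\mathrm{MW}}_{d+1})$; by \Cref{factoresu} applied with $n=d$, this group equals $[X,\mathbb{A}^{d+1}\setminus 0]_{\mathbb{A}^1}$ under $v\mapsto v^*[\omega]$, so $\tilde a=v^*[\omega]$ for some morphism $v:X\to\mathbb{A}^{d+1}\setminus 0$ in $\EuScript{H}^{\mathbb{A}^1}(k)$.

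Now set $\beta:=\alpha_{d+1}\circ v\in[X,\mathrm{GL}]_{\mathbb{A}^1}=\kk_1(X)$. Combining \Cref{compute-delta} with the naturality of the $\Omega c_r$,
\[\Delta(c_{d+1},\xi)\beta=v^*(\Omega c_{d+1})(\alpha_{d+1})+\sum_{r=1}^{d}v^*\bigl((\Omega c_r)(\alpha_{d+1})\bigr)\cdot c_{d+1-r}(\xi);\]
by \Cref{nicesim} every summand with $r\ne d+1$ vanishes, and the remark immediately following \Cref{nicesim} identifies $(\Omega c_{d+1})(\alpha_{d+1})$ with $\pm d!\cdot[\omega_M]$ in $\widetilde{\hh}^d(\mathbb{A}^{d+1}\setminus 0;\mathbf{K}^{\mathrm{M}}_{d+1})\cong\mathbb{Z}$, where $[\omega_M]$ is the image of $[\omega]$ under the reduction $\mathbf{K}^{\mathrm{MW}}\to\mathbf{K}^{\mathrm{M}}$. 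Naturality of this reduction forces $v^*[\omega_M]=a$, whence $\Delta(c_{d+1},\xi)\beta=\pm d!\cdot a$, establishing (1). The surjectivity of $(c_{d+1})_*$ on the $\mathrm{BSL}$-side in (3) is obtained by running the same computation with $\beta_{d+1}:Q_{2d+1}\to\mathrm{SL}_{d+1}$ in place of $\alpha_{d+1}$---it has the same $\kk_1$-class but factors through $\mathrm{SL}$---and then applying the $\mathrm{BSL}$-case of \Cref{chern-delta}. The main obstacle is the identity $(\Omega c_{d+1})(\alpha_{d+1})=\pm d!$, namely the Chern-class computation of Suslin's matrix granted by the remark after \Cref{nicesim}; everything else is naturality, the short exact sequence, and the formula of \Cref{compute-delta}.
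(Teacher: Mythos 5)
Your plan is, in outline, the same as the paper's: use the surjection $\hh^d(X;\mathbf{K}^{\mathrm{MW}}_{d+1})\twoheadrightarrow\hh^d(X;\mathbf{K}^{\mathrm{M}}_{d+1})$ coming from $\hh^{d+1}(X;\mathbf{I}^{d+2})=0$, realise any class via $[X,\mathbb{A}^{d+1}\setminus 0]_{\mathbb{A}^1}$, push through the Suslin matrix map, apply \Cref{compute-delta} and the vanishing of \Cref{nicesim} to kill the cross terms, and conclude by knowing that $(\Omega c_{d+1})(\alpha_{d+1})=\pm d!$. You then deduce (2) and (3) the same way the paper does. So there is no disagreement about the skeleton.

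The problem is the step you yourself flag as the ``main obstacle'' and then declare to be ``granted'': the identity $(\Omega c_{d+1})(\alpha_{d+1})=\pm d!$. You source it to the remark after \Cref{nicesim}, but that remark is not a free-standing result; it literally begins ``By (\ref{chern-sus}) below,\ldots'' and \cref{chern-sus} is an equation established \emph{inside the proof of} \Cref{divisability-cancel}, via the more general identity \cref{chern-row1}: $\tau([(1,0,\cdots,0)\cdot\beta])=\pm(\Omega c_{d+1})([\beta])$ for $\beta\in\mathrm{SL}_{d+1}(A)$. That identity is the real content here, and it is proved by a Moore--Postnikov comparison of fibre sequences identifying the looped $k$-invariant $\Omega e_{d+1}$ with $\tau'\circ{\rm pr}_1\colon\mathrm{SL}_{d+1}\to\mathrm{K}(\mathbf{K}^{\mathrm{MW}}_{d+1},d)$, together with the Asok--Fasel identification of $e_{d+1}$ with the Euler class and its reduction with $c_{d+1}$ \cite{AF16a}. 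Citing the remark therefore makes your argument circular: you assume as input precisely the computation that constitutes the heart of the paper's proof. (The explicit Rost--Schmid computation in the second remark after \Cref{factoresu} will not rescue you either: that only tells you $\psi_{d+1}^*[\omega]=\tfrac{d!}{2}h\cdot[\omega]$, i.e.\ a statement about $\mathrm{pr}_1\circ\beta_{d+1}$; to pass from $\mathrm{pr}_1$ to $\Omega c_{d+1}$ you still need \cref{chern-row1}.) You would need to supply the $k$-invariant/Euler-class argument or something equivalent for the proof to be complete.
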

\begin{proof}
	\begin{enumerate}[label=(\arabic*)]
		\item By \cite[Proposition 2.6]{AF14a}, there is an exact sequence
		\[0\to\mathbf{I}^{d+2}\to\mathbf{K}^{\mathrm{MW}}_{d+1}\to\mathbf{K}^{\mathrm{M}}_{d+1}\to0,\]
		where $\mathbf{I}=\ker(\mathbf{GW}=\mathbf{K}^{\mathrm{MW}}_0\to\mathbf{K}^{\mathrm{M}}_0)\cong\eta\mathbf{K}^{\mathrm{MW}}_1$ is the fundamental ideal and $\mathbf{I}^{d+2}$ its power. Since $X$ has dimension $d$, the cohomology ${\rm H}^j(X; \mathbf{A})$ vanishes for $j>d$ for any ${\bf A}\in\ab_{k}^{\mathbb{A}^1}$ (by the Rost-Schmid complex computation); so ${\rm H}^{d+1}(X; \mathbf{I}^{d+2})=0$, and hence the map
		\[\tau: {\rm H}^d(X; \mathbf{K}^{\mathrm{MW}}_{d+1})\to{\rm H}^d(X; \mathbf{K}^{\mathrm{M}}_{d+1})\]
		given by reducing coefficients is a surjection. So we can write any element of ${\rm H}^d(X; \mathbf{K}^{\mathrm{M}}_{d+1})$ as $\tau([a, b])$ with $(a, b)\in Q_{2d+1}(A), a=(a_0,a_1, \cdots, a_d)\in{\rm Um}_{d+1}(A)$; here we are identifying a class $[a, b]\in[X, Q_{2d+1}]_{\mathbb{A}^1}$ with $[a]\in[X, \mathbb{A}^{d+1}\setminus 0]_{\mathbb{A}^1}$.
		
		We will show that the following equality holds:
		\begin{equation}\label{chern-sus-d!}
		d!\cdot\tau([a, b])=\pm\Delta(c_{d+1}, \xi)([\beta_{d+1}(a, b)]).
		\end{equation}
		Indeed, since $h=\eta\cdot[-1]+2$ becomes $2$ in $\mathbf{K}^{\mathrm{M}}_*$, yielding that $\tau(\frac{d!}{2}h)=d!$, so by \Cref{factoresu},
		\[d!\cdot\tau([a, b])=\tau(d!\cdot[a, b])=\tau((\psi_{d+1})_*[a, b])=\tau(({\rm pr}_1)_*[\beta_{d+1}(a, b)])=\tau([a_0^{d!},a_1, \cdots, a_d]).\]
		While \Cref{nicesim} tells
		\[\Delta(c_{d+1}, \xi)([\beta_{d+1}(a, b)])=(\Omega c_{d+1})([\beta_{d+1}(a, b)]).\]
		We are thus reduced to showing that
		\begin{equation}\label{chern-sus}
		\tau([a_0^{d!},a_1, \cdots, a_d])=\tau(({\rm pr}_1)_*[\beta_{d+1}(a, b)])=\pm(\Omega c_{d+1})([\beta_{d+1}(a, b)]).
		\end{equation}
		This follows by taking $\beta=\beta_{d+1}(a, b)$ in the identity
		\begin{equation}\label{chern-row1}
		\tau([(1,0,\cdots,0)\cdot\beta])=\pm(\Omega c_{d+1})([\beta]), \text{for}\ \beta\in{\rm SL}_{d+1}(A)\subset{\rm SL}(A).
		\end{equation}
		To see the latter, note that $(\mathbb{A}^{d+1}\setminus0)[d]=\mathrm{K}(\mathbf{K}^{\mathrm{MW}}_{d+1}, d)$, by \Cref{seqmotpt} (4), we have an $\mathbb{A}^1$-homotopy fibre sequence
		\[\mathrm{K}(\mathbf{K}^{\mathrm{MW}}_{d+1}, d)\rightarrow (\mathrm{BGL}_{d})^{[d]}\xrightarrow{p_{d}} \mathrm{BGL}_{d+1},\]
		where $(\mathrm{BGL}_{d})^{[d]}$ is the $d$-th Moore-Postnikov level for the map $\mathrm{BGL}_{d}\to\mathrm{BGL}_{d+1}$. By the naturality statement of (dual of) \cite[Proposition 6.5.3]{Hov}, we get a map of $\mathbb{A}^1$-homotopy fibre sequences
		\[
		\begin{tikzcd}
		\mathrm{GL}_{d+1} \arrow[r,"{\rm pr}_1"]\arrow[d, equal] & \mathbb{A}^{d+1}\setminus0 \ar[d, "\tau'"] \ar[r]& \mathrm{BGL}_{d}  \arrow[d]  \ar[r] & \mathrm{BGL}_{d+1}\arrow[d, equal] \\
		\mathrm{GL}_{d+1} \arrow[r]\ & \mathrm{K}(\mathbf{K}^{\mathrm{MW}}_{d+1}, d) \ar[r] & (\mathrm{BGL}_{d})^{[d]}  \ar[r, "p_{d}"] & \mathrm{BGL}_{d+1}.
		\end{tikzcd}\]
		In this diagram, the map $\mathrm{GL}_{d+1}\xrightarrow{{\rm pr}_1}\mathbb{A}^{d+1}\setminus0$ is indeed ``projection to the first row'', since after applying $[U, -]_{\mathbb{A}^1}$, the induced map on homotopy is given by the natural action of $\mathrm{GL}_{d+1}(U)/{\rm E}_{d+1}(U)$ on the class of the base point $(1,0,\cdots,0)\in(\mathbb{A}^{d+1}\setminus0)(k)$.
		
		Thus $\Omega e_{d+1}=\tau'\circ{\rm pr}_1: \mathrm{GL}_{d+1}\to\mathrm{K}(\mathbf{K}^{\mathrm{MW}}_{d+1}, d)$ (in $\EuScript{H}_{*}^{\mathbb{A}^1}(k)$), composing with the maps $X\xrightarrow{\beta}\mathrm{GL}_{d+1}$ and $\mathrm{K}(\mathbf{K}^{\mathrm{MW}}_{d+1}, d)\to\mathrm{K}(\mathbf{K}^{\mathrm{M}}_{d+1}, d)$ we find that, after reducing coefficients, $\Omega e_{d+1}([\beta])$ equals to $\tau([(1,0,\cdots,0)\cdot\beta])$. While by \cite[Theorem 1 and Proposition 5.8]{AF16a}, our $k$-invariant $e_{d+1}$ is the (universal) Euler class (up to a unit in ${\rm GW}(k)$), whose reduction coincides with the universal Chern class $c_{d+1}$ (up to sign, as a unit in ${\rm GW}(k)=\mathbf{K}^{\mathrm{MW}}_0(k)$ is mapped to a unit in $\mathbf{K}^{\mathrm{M}}_0(k)\cong\mathbb{Z}$), establishing our \cref{chern-row1}.
	\end{enumerate}

Statement (2) then follows easily, except for the last statement, for which we apply \cite[Chapter I, Lemma 7.3]{GJ} to the homotopy fiber sequence
\[{\rm RMap}(X, E)\xrightarrow{q}{\rm RMap}(X, \mathrm{BGL})\xrightarrow{c_{d+1}}{\rm RMap}(X, \mathrm{K}(\mathbf{K}^{\mathrm{M}}_{d+1}, d+1))\]
induced by the $\mathbb{A}^1$-homotopy fiber sequence (\ref{1stfibseq}).
\end{proof}
\begin{remark}\label{d!-div}
If $k$ is algebraically closed, then using the Rost-Schmid complex (see \cite{CF17, Fas18} for some nice expositions on related notions and results), one easily finds that ${\rm H}^d(X; \mathbf{K}^{\mathrm{M}}_{d+1})$ is $d!$-divisible, as it is a quotient of the direct sum of groups of the form $\mathbf{K}^{\mathrm{M}}_{1}(\kappa_x)=\kappa_x^{\times}=k^{\times}$ which is $d!$-divisible ($x$ ranges over closed points of $X$). Of course, for the same reason, we have the more refined result that ${\rm H}^d(X; \mathbf{K}^{\mathrm{M}}_{d+1})$ is $d!$-divisible if $\kappa_x^{\times}$ is $d!$-divisible (i.e. $\kappa_x=(\kappa_x)^{d!}$) for all closed points $x\in X$.

More generally, ${\rm H}^d(X; \mathbf{K}^{\mathrm{M}}_{d+1})$ is $d!$-divisible if $k$ has cohomological dimension at most $1$ and $d!\in k^{\times}$ (see for instance \cite[Theorems 1.4 and 2.2]{Fas15}), as a consequence of Voevodsky's confirmation of the motivic Bloch-Kato conjecture (or \emph{norm residue isomorphism theorem}), a highly non-trivial result.

Here is an example not covered by this general criterion. We take $k=\mathbb{R}$, if a smooth affine $\mathbb{R}$-variety $X={\rm Spec}(A)$ of odd dimension $d$ has no $\mathbb{R}$-point (e.g. if $A=\mathbb{R}[x,y,z,w]/(x^2+y^2+z^2+w^2+1)$), then $\kappa_x\cong\mathbb{C}$ for all closed points $x\in X$. Hence on such an $X$, all rank $d$ vector bundles are cancellative. Note that for this class of examples, Bhatwadekar's result \cite{Bha03} can't tell the cancellation property of such modules, since ${\rm c.d.}(\mathbb{R})=\infty$ and $\mathbb{R}$ is far from being a $C_1$-field.
\end{remark}

\vspace{5mm}

Now we treat the case when $n=d$ is even. To still get a principal $\mathbb{A}^1$-homotopy fiber sequence, we need to restrict ourselves to the case of oriented vector bundles---namely those classified by homotopy classes of maps to $\mathrm{BSL}$ or $\mathrm{BSL}_d$ (since $\pi_{1}^{\mathbb{A}^1}\mathrm{BSL}_d=0$, as opposed to the fact that $\pi_{1}^{\mathbb{A}^1}\mathrm{BGL}_d=\mathbb{G}_m$).

Note first that our discussion from \S 4 up to \S 6 here are still valid if we replace $\mathrm{GL}$ with $\mathrm{SL}$ everywhere, essentially because we also have the $\mathbb{A}^1$-homotopy fiber sequence
\[\mathbb{A}^{n+1}\setminus{0}\to\mathrm{BSL}_{n}\to\mathrm{BSL}_{n+1}.\]
The only difference is that the obstruction class (namely the $k$-invariant $\theta$) is different from the case when $n=d$ is odd.

We now identify the $k$-invariant $\theta$. By the functoriality of the Moore-Postnikov tower, applied to the square
\[
\begin{tikzcd}
\mathrm{BSL}_{d} \ar[r]\ar[d, equal] &\mathrm{BSL}_{d+1}\ar[d]\\
\mathrm{BSL}_{d} \ar[r] &\mathrm{BSL}
\end{tikzcd}\]
(factoring the rows up to the first non-trivial stage $E', E$) we have the following map of $\mathbb{A}^1$-homotopy fibre sequences (when deleting the last column)
\[
\begin{tikzcd}
\mathrm{K}(\mathbf{K}^{\mathrm{MW}}_{d+1}, d) \ar[r]\arrow[d, equal] & E' \ar[r]\ar[d]&\mathrm{BSL}_{d+1}\arrow[r,"e_{d+1}"]\arrow[d, "s_{d+1}"]  & \mathrm{K}(\mathbf{K}^{\mathrm{MW}}_{d+1}, d+1) \arrow[d, equal]  \ar[r, "\tau"] & \mathrm{K}(\mathbf{K}^{\mathrm{M}}_{d+1}, d+1)\arrow[d, equal] \\
\mathrm{K}(\mathbf{K}^{\mathrm{MW}}_{d+1}, d) \ar[r]  & E \ar[r] &\mathrm{BSL}\arrow[r,"\theta"] & \mathrm{K}(\mathbf{K}^{\mathrm{MW}}_{d+1}, d+1)  \ar[r, "\tau"] & \mathrm{K}(\mathbf{K}^{\mathrm{M}}_{d+1}, d+1).
\end{tikzcd}\]
So $\tau e_{d+1}=c_{d+1}=\tau\theta s_{d+1}$ (as Chern classes stabilize, we can write $c_{d+1}=\tau\theta$).

Note that as $\dim X=d$, we have ${\rm SK}_1(X)=[X, \Omega\mathrm{BSL}]_{\mathbb{A}^1}=[X, \mathrm{SL}]_{\mathbb{A}^1}\stackrel{(s_{d+1})_*}{=\joinrel=\joinrel=\joinrel=}[X, \mathrm{SL}_{d+1}]_{\mathbb{A}^1}$. We thus have a commutative diagram (since $\tau$ is induced by the homomorphism $\mathbf{K}^{\mathrm{MW}}_{d+1}\to\mathbf{K}^{\mathrm{M}}_{d+1}$, we can move $\tau$ out; cf. \cite[(1.1)]{JTh65})
\[
\begin{tikzcd}[column sep=4em]
{[X, \mathrm{SL}]_{\mathbb{A}^1}} \ar[r, "{\Delta(\theta, \xi)}"]\ar[d, equal] &{{\rm H}^d(X; \mathbf{K}^{\mathrm{MW}}_{d+1})}\ar[d, "\tau"]\\
{[X, \mathrm{SL}]_{\mathbb{A}^1}} \ar[r, "{\Delta(c_{d+1}, \xi)}"] & {{\rm H}^d(X; \mathbf{K}^{\mathrm{M}}_{d+1}).}
\end{tikzcd}\]
Again by the exact sequence
\[0\to\mathbf{I}^{d+2}\to\mathbf{K}^{\mathrm{MW}}_{d+1}\to\mathbf{K}^{\mathrm{M}}_{d+1}\to0,\]
we see that the right vertical map $\tau: {\rm H}^d(X; \mathbf{K}^{\mathrm{MW}}_{d+1})\to{\rm H}^d(X; \mathbf{K}^{\mathrm{M}}_{d+1})$ is surjective (since $X$ has $\mathbb{A}^1$-cohomological dimension at most $d$).

If we further assume that the $2$-cohomological dimension of our base field $k$ (perfect and ${\rm char} (k)\neq 2$) is at most $2$: ${\rm c.d.}_2(k)\leqslant 2$, then by (the proof of) \cite[Theorem 2.1]{Fas15} (using Gersten-Witt complex of $X={\rm Spec}\ A$ and assuming $d\geqslant 3$), we have ${\rm H}^d(X; \mathbf{I}^{d+2})=0$. So in this case, the right vertical map $\tau: {\rm H}^d(X; \mathbf{K}^{\mathrm{MW}}_{d+1})\to{\rm H}^d(X; \mathbf{K}^{\mathrm{M}}_{d+1})$ is an isomorphism.

Finally, we are able to give results similar to those in the odd dimensional case (the proof of which is obtained by suitably changing notation).

Similarly as before, we have the induced map on $\mathbb{A}^1$-homotopy classes
\[\varphi_*: [X, \mathrm{BSL}_{n}]_{\mathbb{A}^1}\to[X, \mathrm{BSL}]_{\mathbb{A}^1}\]
and want to enumerate its fibers. We also use $\EuScript{V}_n^{\circ}(X)$ to denote the set of isomorphism classes of rank $n$ oriented vector bundle over $X$.
\begin{prop}\label{enu-evenrk}
	Assume that the base field $k$ is perfect and ${\rm char} (k)\neq 2$ with ${\rm c.d.}_2(k)\leqslant 2$. Let $d\geqslant 2$ be even, let $\xi$ be a stable oriented vector bundle over $X$, whose classifying map is still denoted $\xi: X\to\mathrm{BSL}$. Then there is a bijection
	\[\varphi_*^{-1}(\xi)\cong{\rm coker}\big({\rm SK}_1(X)=[X, \Omega\mathrm{BSL}]_{\mathbb{A}^1}\xrightarrow{\Delta(c_{d+1}, \xi)}[X, \mathrm{K}(\mathbf{K}^{\mathrm{M}}_{d+1}, d)]_{\mathbb{A}^1}={\rm H}^{d}(X; \mathbf{K}^{\mathrm{M}}_{d+1})\big).\]
\end{prop}

\begin{prop}
	Assume that the condition of \Cref{enu-evenrk} is satisfied.
	\begin{enumerate}[label=\emph{(\arabic*)}]
		\item Let $\xi, \xi'\in[X, \mathrm{BSL}]_{\mathbb{A}^1}$. If $c_j(\xi)=c_j(\xi'), 1\leqslant j\leqslant d$, then $\xi$ and $\xi'$ have the same number of representatives in $\EuScript{V}_d^{\circ}(X)$.
		\item Let $\xi, \xi'\in\EuScript{V}_d^{\circ}(X)$, sharing the same total Chern class. If $\xi$ is cancellative, then so is $\xi'$.
	\end{enumerate}
\end{prop}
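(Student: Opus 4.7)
The plan is to deduce both statements formally from \Cref{enu-evenrk} together with the explicit formula
\[\Delta(c_{d+1},\xi)(\beta)=\Delta(c_{d+1},0)(\beta)+\sum_{r=1}^{d}\Delta(c_r,0)(\beta)\cdot c_{d+1-r}(\xi)\]
supplied by \Cref{compute-delta}; as remarked just before \Cref{enu-evenrk}, the entire framework of \S4--\S6 carries over verbatim with $\mathrm{BSL}$ replacing $\mathrm{BGL}$, so this formula is available here.

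The key step I would single out first is the observation that, as $r$ ranges over $\{1,\dots,d\}$, the index $d+1-r$ also ranges over $\{1,\dots,d\}$; thus the right-hand side above depends on $\xi$ only through the Chern classes $c_1(\xi),\dots,c_d(\xi)$. Assertion (1) then follows immediately: if $c_j(\xi)=c_j(\xi')$ for all $1\leq j\leq d$, the homomorphisms $\Delta(c_{d+1},\xi)$ and $\Delta(c_{d+1},\xi')$ coincide, their cokernels are equal, and \Cref{enu-evenrk} gives
\[\varphi_*^{-1}(\xi)\cong\ck\Delta(c_{d+1},\xi)=\ck\Delta(c_{d+1},\xi')\cong\varphi_*^{-1}(\xi'),\]
which is the asserted bijection of fibres.

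For (2), suppose $\xi,\xi'\in\EuScript{V}_d^{\circ}(X)$ share the same total Chern class, and write $\xi_s,\xi'_s\in[X,\mathrm{BSL}]_{\mathbb{A}^1}$ for the corresponding stable classes. Since Chern classes are stable invariants one has $c_j(\xi_s)=c_j(\xi)=c_j(\xi')=c_j(\xi'_s)$ for each $1\leq j\leq d$, so part (1) produces a bijection $\varphi_*^{-1}(\xi_s)\cong\varphi_*^{-1}(\xi'_s)$. Cancellativity of $\xi$ is equivalent to $\varphi_*^{-1}(\xi_s)=\{[\xi]\}$ being a singleton, whence $\varphi_*^{-1}(\xi'_s)$ is a singleton too and must equal $\{[\xi']\}$; consequently $\xi'$ is cancellative. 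I do not anticipate any real obstacle here: both parts rest on the purely formal observation that $\xi$ enters the cokernel computation only through $c_1(\xi),\dots,c_d(\xi)$.
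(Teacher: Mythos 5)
Your proof is correct and follows the same route the paper intends: the paper states this proposition without proof, referring to the analogous "obvious" corollaries in the odd-rank case (following \Cref{enu-oddrk} and \Cref{compute-delta}), and your argument — that the formula for $\Delta(c_{d+1},\xi)$ shows the homomorphism depends on $\xi$ only through $c_1(\xi),\dots,c_d(\xi)$, so equal Chern classes give equal cokernels and hence equipotent fibres of $\varphi_*$ — is exactly that reasoning carried out explicitly.
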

\begin{theorem}\label{divisability-cancel-even}
Assume that the base field $k$ is perfect and ${\rm char} (k)\neq 2$ with ${\rm c.d.}_2(k)\leqslant 2$. Assume that $A$ is a smooth $k$-algebra of even Krull dimension $d\geqslant 2$. Let $X={\rm Spec}\ A$, $\xi\in[X, \mathrm{BSL}]_{\mathbb{A}^1}$.
\begin{enumerate}[label=\emph{(\arabic*)}]
	\item We have $d!\cdot{\rm H}^d(X; \mathbf{K}^{\mathrm{M}}_{d+1})\subset{\rm im}\Delta(c_{d+1}, \xi)$. So there is a surjective homomorphism
	\[{\rm H}^d(X; \mathbf{K}^{\mathrm{M}}_{d+1})/d!\twoheadrightarrow{\rm coker}\Delta(c_{d+1}, \xi).\]
	\item If ${\rm H}^d(X; \mathbf{K}^{\mathrm{M}}_{d+1})$ is $d!$-divisible, then ${\rm coker}\Delta(c_{d+1}, \xi)=0$. In this case, any rank $d$ oriented vector bundle is cancellative. Moreover,  the map
	\[(c_{d+1})_*: \pi_1({\rm RMap}(X, \mathrm{BSL}), \xi)\xrightarrow{\theta_*}\pi_1({\rm RMap}(X, \mathrm{K}(\mathbf{K}^{\mathrm{MW}}_{d+1}, d+1)), 0)={\rm H}^{d}(X; \mathbf{K}^{\mathrm{MW}}_{d+1})\xrightarrow{\tau}{\rm H}^{d}(X; \mathbf{K}^{\mathrm{M}}_{d+1})\]
	is surjective for every $\xi\in[X, \mathrm{BSL}]_{\mathbb{A}^1}$.
\end{enumerate}
\end{theorem}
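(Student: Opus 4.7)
The plan is to run the proof of \Cref{divisability-cancel} with $\mathrm{GL}$ replaced by $\mathrm{SL}$ throughout. The analog $\mathbb{A}^1$-homotopy fiber sequence $\mathbb{A}^{d+1}\setminus 0\to\mathrm{BSL}_d\to\mathrm{BSL}_{d+1}$ makes the computation of the Moore--Postnikov tower in \S 4 and the framework of \S\S 5--6 transport almost verbatim. The one genuine change is that the first non-trivial $k$-invariant in the tower for $\mathrm{BSL}_{d}\to\mathrm{BSL}$ is now the universal Euler class $\theta:\mathrm{BSL}\to\mathrm{K}(\mathbf{K}^{\mathrm{MW}}_{d+1},d+1)$ (Milnor--Witt coefficients) rather than the Milnor Chern class; these are related by $\tau\theta=c_{d+1}$ with $\tau$ the reduction induced by $\mathbf{K}^{\mathrm{MW}}_{d+1}\twoheadrightarrow\mathbf{K}^{\mathrm{M}}_{d+1}$. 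The hypothesis ${\rm c.d.}_2(k)\leqslant 2$ enters precisely through the paragraphs immediately preceding \Cref{enu-evenrk}: it forces $\hh^d(X;\mathbf{I}^{d+2})=0$ via the Gersten--Witt complex argument of \cite{Fas15}, hence $\tau:\hh^d(X;\mathbf{K}^{\mathrm{MW}}_{d+1})\xrightarrow{\cong}\hh^d(X;\mathbf{K}^{\mathrm{M}}_{d+1})$.

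For (1), I would establish, for every $(a,b)\in Q_{2d+1}(A)$ with first coordinate $a=(a_0,\ldots,a_d)\in{\rm Um}_{d+1}(A)$, the identity
\[
d!\cdot\tau([a,b])=\pm\,\Delta(c_{d+1},\xi)\bigl([\beta_{d+1}(a,b)]\bigr)
\]
in $\hh^d(X;\mathbf{K}^{\mathrm{M}}_{d+1})$, where $\beta_{d+1}$ is Suslin's matrix. This is the direct transcription of \cref{chern-sus-d!}: the left hand side rewrites as $\tau([a_0^{d!},a_1,\ldots,a_d])=\tau(({\rm pr}_1)_*[\beta_{d+1}(a,b)])$ by \Cref{factoresu}, while on the right, \Cref{compute-delta} together with \Cref{nicesim} collapses $\Delta(c_{d+1},\xi)([\beta_{d+1}(a,b)])$ to the single term $(\Omega c_{d+1})([\beta_{d+1}(a,b)])$, since $(\Omega c_{r})(\beta_{d+1})=0$ for $r\neq d+1$. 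Matching the two sides is the content of the $\mathrm{BSL}$-analog of \cref{chern-row1}, whose proof uses the Moore--Postnikov tower of $\mathrm{BSL}_{d}\to\mathrm{BSL}_{d+1}$: by \cite{AF16a}, the first non-trivial $k$-invariant $e_{d+1}$ is the universal Euler class, whose $\tau$-reduction is $c_{d+1}$ up to a unit in ${\rm GW}(k)$, so that $\Omega e_{d+1}=\tau'\circ{\rm pr}_1$ on $\mathrm{SL}_{d+1}$. Surjectivity of $\tau$ in degree $d$ then shows the elements $\tau([a,b])$ exhaust $\hh^d(X;\mathbf{K}^{\mathrm{M}}_{d+1})$, giving the desired inclusion $d!\cdot\hh^d(X;\mathbf{K}^{\mathrm{M}}_{d+1})\subset\im\Delta(c_{d+1},\xi)$.

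Parts (2) and (3) are then formal: (2) is the tautological surjection induced by (1) through the definition of the cokernel; the cancellation statement in (3) combines $\ck\Delta(c_{d+1},\xi)=0$ with the bijection of \Cref{enu-evenrk}; and the final surjectivity on $\pi_1$ of mapping spaces follows from the $\mathrm{BSL}$-variant of \Cref{chern-delta} (which the author's proof explicitly allows via \Cref{bgl-comp}) applied to $\theta$ and composed with $\tau$: the composite $\tau\circ\theta_*\circ T_\xi$ becomes $(\tau\theta)_*\circ T_\xi=(c_{d+1})_*\circ T_\xi=\Delta(c_{d+1},\xi)$, which is surjective by (3). The main obstacle I anticipate is not conceptual but purely a matter of bookkeeping: verifying that each construction of \S\S 4--6 (the maps $\theta_1,\theta_2,m'$ and the enumeration bijection of \Cref{enu-oddrk}) transports to the oriented setting with $\mathrm{BSL}$ replacing $\mathrm{BGL}$, and that $\tau$ is compatible with the external product pairing defining $\theta_2$. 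The latter is automatic since $\tau:\mathbf{K}^{\mathrm{MW}}_*\to\mathbf{K}^{\mathrm{M}}_*$ is a morphism of graded sheaves of rings, so all the diagrams chase without modification.
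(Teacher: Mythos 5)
Your proposal is correct and follows the same route the paper takes: \Cref{chern-row1} and \cref{chern-sus-d!} are already proved in the $\mathrm{SL}$-world, the hypothesis ${\rm c.d.}_2(k)\leqslant 2$ enters exactly through $\hh^d(X;\mathbf{I}^{d+2})=0$ making $\tau$ an isomorphism in degree $d$, and the rest transports verbatim (the paper says as much just before \Cref{enu-evenrk}). One minor terminological caveat: the $k$-invariant $\theta:\mathrm{BSL}\to\mathrm{K}(\mathbf{K}^{\mathrm{MW}}_{d+1},d+1)$ is not literally a ``universal Euler class'' on $\mathrm{BSL}$ (there is no top Chern class in the stable range); what the paper establishes and uses is only that $\theta s_{d+1}=e_{d+1}$ and hence $\tau\theta=c_{d+1}$, which is all your argument actually invokes.
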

\begin{remark}
For the even rank case, our assumption on the $2$-cohomological dimension of the base field $k$ cannot be omitted in order to get ${\rm H}^d(X; \mathbf{I}^{d+2})=0$ so that $\tau: {\rm H}^d(X; \mathbf{K}^{\mathrm{MW}}_{d+1})\to{\rm H}^d(X; \mathbf{K}^{\mathrm{M}}_{d+1})$ is an isomorphism: if we take $A=\mathbb{R}[x, y, z]/(x^2+y^2+z^2-1)$, then ${\rm H}^d({\rm Spec}\ A; \mathbf{I}^{d+2})\neq 0$ (note that ${\rm c.d.}_2(\mathbb{R})=\infty$); and the rank $2$ stably free $A$-module $A^3/A(x, y, z)$ is indeed not free.

On the other hand, quite a lot of fields satisfy our assumption, e.g. any finite field (of odd characteristic), any algebraically closed field, or any field of the form $L(t)$ or $L(t_1, t_2)$ for an algebraically closed field $L$ with ${\rm char}(L)=0$.
\end{remark}

\section{Application to vector bundles below critical rank}

Assume that the base field $k$ is perfect and ${\rm char} (k)\neq 2$, $A$ is a smooth affine $k$-algebra of Krull dimension $d\geqslant 3$, and $X={\rm Spec}(A)$. Let $\xi$ be a stable oriented vector bundle over $X$, whose classifying map is still denoted $\xi: X\to\mathrm{BSL}$. We will now investigate the isomorphism classes of rank $d-1$ oriented vector bundles that are stably equivalent to the given $\xi$.

If $d\geqslant 4$ is even, we have the $\mathbb{A}^1$-homotopy fiber sequence
\[F_{d-1}\to\mathrm{BSL}_{d-1}\xrightarrow{\varphi=\varphi_{d-1}}\mathrm{BSL}.\]
We consider the two-stage Moore-Postnikov factorization (\Cref{MotMP}) of the map $\varphi: \mathrm{BSL}_{d-1}\to\mathrm{BSL}$,
\begin{equation}\label{MPfact}
\begin{tikzcd}
F=F_{d-1} \ar[r] & \mathrm{BSL}_{d-1} \arrow[d, "q'"'] \\
\mathrm{K}(\pi_{d}^{\mathbb{A}^1}F_{d-1}, d) \ar[r] & E' \arrow[d, "q"'] \\
\mathrm{K}(\mathbf{K}^{\mathrm{M}}_{d}, d-1) \ar[r] & E \arrow[r, "\theta'"] \arrow[d, "p"'] & \mathrm{K}(\pi_{d}^{\mathbb{A}^1}F_{d-1}, d+1) \\
X  \ar[r, "\xi"] \ar[ru, "\xi_E"] &  \mathrm{BSL} \arrow[r, "\theta"]          & \mathrm{K}(\mathbf{K}^{\mathrm{M}}_{d}, d).
\end{tikzcd}
\end{equation}
By the properties listed in \Cref{MotMP}, it is easy to see that the map $q': \mathrm{BSL}_{d-1}\to E'$ induces a bijection
\[q'_*: [X, \mathrm{BSL}_{d-1}]_{\mathbb{A}^1}\to[X, E']_{\mathbb{A}^1}.\]
We thus need only to find under what conditions, the maps
\[p_*: [X, E]_{\mathbb{A}^1}\to[X, \mathrm{BSL}]_{\mathbb{A}^1}\]
and
\[q_*: [X, E']_{\mathbb{A}^1}\to[X, E]_{\mathbb{A}^1}\]
are injections.

As before, we have the following commutative diagram
\[
\begin{tikzcd}
\mathrm{BSL}_{d}\arrow[r,"e_d"]\arrow[d, "s_d"']  & \mathrm{K}(\mathbf{K}^{\mathrm{MW}}_d, d) \arrow[d, "\tau"] \\
\mathrm{BSL}\arrow[r,"\theta"] & \mathrm{K}(\mathbf{K}^{\mathrm{M}}_d, d).
\end{tikzcd}\]
Thus $\theta s_d=\tau e_d=c_d$ by \cite[Example 5.2 and Proposition 5.8]{AF16a} as before.

On the other hand, by Serre's splitting theorem, we can write $\xi=(s_d)_*([\xi_d])$ for some $[\xi_d]\in[X, \mathrm{BSL}_d]_{\mathbb{A}^1}$. So $\theta_*([\xi])=(\theta s_d)_*[\xi_d]=(\tau e_d)_*[\xi_d]=c_d(\xi_d)=c_d(\xi)$ (as Chern classes stabilizes), we can write $\theta=c_d$.

Note that whenever $\xi$ is represented by a rank $d-1$ vector bundle, we will have $c_d(\xi)=0$. So by \Cref{lift=ck} with \Cref{compute-delta} and \Cref{computedeltaform} applied to the case $n=d-1$, we obtain the following description of $p_*^{-1}([\xi])$.
\begin{prop}\label{Delt-evend-1}
	Let $k$ be a perfect field with ${\rm char}(k)\ne 2$, let $A$ be a smooth affine $k$-algebra of even Krull dimension $d\geqslant 4$, and $X={\rm Spec}(A)$, let $\xi$ be a stable oriented vector bundle over $X$, whose classifying map is still denoted $\xi: X\to\mathrm{BSL}$. If $\xi$ is represented by a rank $d-1$ vector bundle, there is a bijection
	\[p_*^{-1}(\xi)\longleftrightarrow{\rm coker}(\Delta(c_{d}, \xi)).\]
	The homomorphism $\Delta(c_d, \xi): {\rm SK}_1(X)\to{\rm H}^{d-1}(X; \mathbf{K}^{\mathrm{M}}_{d})$ is given as follows: for $\beta\in{\rm SK}_1(X)$,
	\[\Delta(c_d, \xi)\beta=(\Omega c_d)(\beta)+\sum_{r=1}^{d-1}((\Omega c_{r})(\beta))\cdot c_{d-r}(\xi).\]
\end{prop}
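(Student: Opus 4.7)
The plan is to apply directly the general enumeration framework of \S5--\S6, transported from $\mathrm{BGL}$ to $\mathrm{BSL}$ (legitimate by the remark preceding \Cref{enu-evenrk}) and specialised to the first Moore--Postnikov stage $p: E \to \mathrm{BSL}$ displayed in \cref{MPfact}. Since $d-1$ is odd, the computation of $\pi_n^{\mathbb{A}^1}F_n$ given in \Cref{hogpfibs} yields $\pi_{d-1}^{\mathbb{A}^1}F_{d-1} \cong \mathbf{K}^{\mathrm{M}}_d$; because $\mathrm{BSL}$ is $\mathbb{A}^1$-simply connected, the Moore--Postnikov $k$-invariant for $p$ is a genuine (untwisted) class $\theta \in [\mathrm{BSL}, \mathrm{K}(\mathbf{K}^{\mathrm{M}}_d, d)]_{\mathbb{A}^1}$, yielding a principal $\mathbb{A}^1$-fibre sequence
\[\mathrm{K}(\mathbf{K}^{\mathrm{M}}_d, d-1) \to E \xrightarrow{p} \mathrm{BSL} \xrightarrow{\theta} \mathrm{K}(\mathbf{K}^{\mathrm{M}}_d, d)\]
of exactly the shape demanded by \Cref{lift=ck}.

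The first step is to identify $\theta$ with the universal Chern class $c_d$: this has already been carried out in the paragraphs immediately preceding the proposition, via functoriality of the Moore--Postnikov tower applied to $\mathrm{BSL}_d \to \mathrm{BSL}$ together with $\tau e_d = c_d$ and the stability of Chern classes. Next I verify the hypothesis $\theta_*(\xi) = 0$ of \Cref{lift=ck}: since by assumption $\xi$ admits a representative of rank $d-1$, we have $c_d(\xi) = 0$, as Chern classes vanish above the rank. Invoking \Cref{lift=ck} in its $\mathrm{SL}$-version then immediately produces the bijection
\[p_*^{-1}(\xi) \cong \ck\bigl(\Delta(c_d, \xi): \mathrm{SK}_1(X) \to \hh^{d-1}(X; \mathbf{K}^{\mathrm{M}}_d)\bigr).\]

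The explicit formula for $\Delta(c_d, \xi)$ comes from \Cref{compute-delta} applied with $n$ replaced by $d-1$ (and $\mathrm{GL}$ replaced by $\mathrm{SL}$). The proof of that theorem, via \Cref{computedeltaform}, relies only on the Whitney sum formula $m^* c_d = \sum_{r=0}^{d} c_r \times c_{d-r}$, which restricts without change to $\mathrm{BSL} \times \mathrm{BSL}$, together with the multiplicative structure of Milnor $K$-theory cohomology. There is no substantive obstacle here: the only bookkeeping worth double-checking is that the formal properties of Chern classes on $\mathrm{BGL}$ used in the argument transfer to $\mathrm{BSL}$, which follows from compatibility of the $H$-space structures with the inclusion $\mathrm{BSL} \hookrightarrow \mathrm{BGL}$ and the fact that Chern classes are stable under this inclusion.
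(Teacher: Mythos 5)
Your proposal matches the paper's own argument: identify the $k$-invariant $\theta$ with $c_d$ via the comparison diagram and Serre splitting (established in the paragraphs preceding the proposition), note $c_d(\xi)=0$ since $\xi$ has a rank $d-1$ representative, and then invoke \Cref{lift=ck} together with \Cref{compute-delta}/\Cref{computedeltaform} at $n=d-1$ in their $\mathrm{SL}$-versions. The observation that $\mathrm{BSL}$ being $\mathbb{A}^1$-simply connected ensures the $k$-invariant is untwisted is implicit in the paper's diagram \cref{MPfact}, so your argument is complete and in essentially the same route.
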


Below, for a sheaf of abelian groups $\mathbf{K}$ and an integer $m$, we denote by $\mathbf{K}/m:=\mathbf{K}/m\mathbf{K}$ for the mod-$m$ quotient sheaf and $\leftidx{_m}{\mathbf{K}}:=\ker(\mathbf{K}\xrightarrow{m}m\mathbf{K})$ for the subsheaf killed by $m$.  Since the contraction functor $(-)_{-1}: \ab_{k}^{\mathbb{A}^1}\to\ab_{k}^{\mathbb{A}^1}$ is exact, the constructions of the mod-$m$ quotient and the subsheaves killed by $m$ are preserved by (iterated) contractions. We also write $\mu_m$ for the \'etale sheaf of $m$-th roots of $1$, and $\mu_m^{\otimes n}$ for its $n$-th tensor power. Denote $\bar{\mathbf{I}}^j:=\mathbf{I}^j/\mathbf{I}^{j+1}\cong\mathbf{K}^{\mathrm{M}}_{j}/2$. The proof of the following result is adapted from that of \cite[Proposition 6.1]{FRS}.
\begin{prop}\label{divofcohmil}
	Assume that the base field $k$ is algebraically closed, $X={\rm Spec}(A)$ is a connected smooth affine $k$-scheme of dimension $d\geqslant 3$. Then
	\[{\rm H}^{d-1}(X; \mathbf{K}^{\mathrm{M}}_{d})/m=0\]
	
	In particular, if ${\rm char}(k)=0$ or ${\rm char}(k)\geqslant d$, then ${\rm H}^{d-1}(X; \mathbf{K}^{\mathrm{M}}_{d})$ is $(d-1)!$-divisible.
\end{prop}
\begin{proof}
	Writing $m=\ell_1^{r_1}\cdots\ell_s^{r_s}$, with each $\ell_i$ prime, $\ell_i\ne{\rm char}(k)$ and $r_i\in\mathbb{N}$, we see that it suffices to consider the case $m=\ell^r$. Let $\ell$ be a prime number and $\ell\ne{\rm char}(k)$, let $r\in\mathbb{N}$. Consider the short exact sequences
	\[0\to\leftidx{_{\ell^r}}{\mathbf{K}^{\mathrm{M}}_{d}}\to\mathbf{K}^{\mathrm{M}}_{d}\xrightarrow{\ell^r}\ell^r\mathbf{K}^{\mathrm{M}}_{d}\to0\]
	and
	\[0\to\ell^r\mathbf{K}^{\mathrm{M}}_{d}\to\mathbf{K}^{\mathrm{M}}_{d}\to\mathbf{K}^{\mathrm{M}}_{d}/\ell^r\to0.\]
	Using the Rost-Schmid complex we see that ${\rm H}^d(X; \leftidx{_{\ell^r}}{\mathbf{K}^{\mathrm{M}}_{d}})$ is a quotient of a direct sum of groups of the form $\leftidx{_{\ell^r}}{\mathbf{K}^{\mathrm{M}}_0(\kappa_x)}\cong\leftidx{_{\ell^r}}{\mathbb{Z}}=0$ (over all closed points $x\in X^{(d)}$) as $\mathbb{Z}$ is torsion-free, thus ${\rm H}^d(X; \leftidx{_{\ell^r}}{\mathbf{K}^{\mathrm{M}}_{d}})=0$. We then have exact sequences
	\[{\rm H}^{d-1}(X; \mathbf{K}^{\mathrm{M}}_{d})\to{\rm H}^{d-1}(X; \ell^r\mathbf{K}^{\mathrm{M}}_{d})\to{\rm H}^d(X; \leftidx{_{\ell^r}}{\mathbf{K}^{\mathrm{M}}_{d}})=0\]
	and
	\[{\rm H}^{d-1}(X; \ell^r\mathbf{K}^{\mathrm{M}}_{d})\to{\rm H}^{d-1}(X; \mathbf{K}^{\mathrm{M}}_{d})\to{\rm H}^{d-1}(X; \mathbf{K}^{\mathrm{M}}_{d}/\ell^r).\]
	Splicing together we get an exact sequence
	\[{\rm H}^{d-1}(X; \mathbf{K}^{\mathrm{M}}_{d})\xrightarrow{\ell^r}{\rm H}^{d-1}(X; \mathbf{K}^{\mathrm{M}}_{d})\to{\rm H}^{d-1}(X; \mathbf{K}^{\mathrm{M}}_{d}/\ell^r)\]
	and hence
	\[0\to{\rm H}^{d-1}(X; \mathbf{K}^{\mathrm{M}}_{d})/\ell^r\to{\rm H}^{d-1}(X; \mathbf{K}^{\mathrm{M}}_{d}/\ell^r).\]
	Therefore to prove ${\rm H}^{d-1}(X; \mathbf{K}^{\mathrm{M}}_{d})/\ell^r=0$, it suffices to prove that ${\rm H}^{d-1}(X; \mathbf{K}^{\mathrm{M}}_{d}/\ell^r)=0$.
	
	For $j, n\in\mathbb{N}$, let $\mathscr{H}^j(n)=(\mathbf{R}^j\iota_*)\mu_{\ell^r}^{\otimes n}$ be the Zariski sheaf associated to the presheaf $U\mapsto{\rm H}^j_{\rm \acute{e}t}(U; \mu_{\ell^r}^{\otimes n})$ (where $\iota$ is the inclusion of the Zariski site into the \'etale site). We have the biregular \emph{Bloch-Ogus spectral sequence} (\cite{BOSS74}; it is one incarnation of the \emph{Leray spectral sequence})
	\[{\rm E}_2^{ij}={\rm H}^i_{\rm Zar}(X; \mathscr{H}^j(n))\Longrightarrow{\rm H}^{i+j}_{\rm \acute{e}t}(X; \mu_{\ell^r}^{\otimes n}).\]
	The term ${\rm E}_2^{ij}={\rm H}^i_{\rm Zar}(X; \mathscr{H}^j(n))$ can be computed as the $i$-th cohomology of the \emph{Gersten complex}
	\[{\rm H}_{\rm \acute{e}t}^j(\kappa_{\eta}; \mu_{\ell^r}^{\otimes n})\to\cdots\to\bigoplus_{x\in X^{(i)}}{\rm H}_{\rm \acute{e}t}^{j-i}(\kappa_{x}; \mu_{\ell^r}^{\otimes n-i})\to\cdots,\]
	where $\eta\in X^{(0)}$ is the generic point of $X$.
	
	Since $k=\bar{k}$, the cohomological dimension ${\rm c.d.}(\kappa_x)\leqslant d-\dim(\mathscr{O}_{X, x})$ (\cite[\S4.2, Proposition 11]{Ser94}), we see ${\rm H}_{\rm \acute{e}t}^{j-i}(\kappa_x; \mu_{\ell^r}^{\otimes n-i})=0$ for $x\in X^{(i)}$ if $j-i>d-i$. Thus ${\rm E}_2^{ij}={\rm H}^i_{\rm Zar}(X; \mathscr{H}^j(n))=0$ if $i>d=\dim\ X$ or $j>d$ or $i>j$. Hence in the filtration of the converging term ${\rm H}^{2d-1}_{\rm \acute{e}t}(X; \mu_{\ell^r}^{\otimes n})$, the only (possibly) non-trivial term is ${\rm E}_2^{d-1, d}={\rm H}^{d-1}_{\rm Zar}(X; \mathscr{H}^d(n))$. While by \cite[Chapter VI, Theorem 7.2]{Milne80}, ${\rm H}^{2d-1}_{\rm \acute{e}t}(X; \mu_{\ell^r}^{\otimes n})=0$ since $X$ is affine over $k=\bar{k}$. Thus ${\rm E}_2^{d-1, d}={\rm H}^{d-1}_{\rm Zar}(X; \mathscr{H}^d(n))=0$ as well.
	
	There is a commutative diagram (\cite[Theorem 2.3]{Bloch80})
	\[
	\begin{tikzcd}
	\bigoplus_{x\in X^{(d-2)}}{\rm K}_2^{\rm M}(\kappa_{x})/\ell^r \ar[r]\ar[d] & \bigoplus_{x\in X^{(d-1)}}{\rm K}_1^{\rm M}(\kappa_{x})/\ell^r  \ar[r]\ar[d] & \bigoplus_{x\in X^{(d)}}{\rm K}_0^{\rm M}(\kappa_{x})/\ell^r\ar[d]  \\
	\bigoplus_{x\in X^{(d-2)}}{\rm H}_{\rm \acute{e}t}^{2}(\kappa_{x}; \mu_{\ell^r}^{\otimes 2}) \ar[r] & \bigoplus_{x\in X^{(d-1)}}{\rm H}_{\rm \acute{e}t}^{1}(\kappa_{x}; \mu_{\ell^r})  \ar[r] & \bigoplus_{x\in X^{(d)}}{\rm H}_{\rm \acute{e}t}^{0}(\kappa_{x}; \mathbb{Z}/\ell^r),
	\end{tikzcd}\]
	where the vertical maps are isomorphisms by \cite{MerSus} (or Voevodsky's confirmation of the motivic Bloch-Kato conjecture). The homology of the middle terms in the two rows compute ${\rm H}^{d-1}(X; \mathbf{K}^{\mathrm{M}}_{d}/\ell^r)$ and respectively ${\rm H}^{d-1}_{\rm Zar}(X; \mathscr{H}^d(d))(=0)$. Thus
	\[{\rm H}^{d-1}(X; \mathbf{K}^{\mathrm{M}}_{d}/\ell^r)={\rm H}^{d-1}_{\rm Zar}(X; \mathscr{H}^d(d))=0.\]
	We are done.
\end{proof}
Let's now treat the case when the dimension of $X$ is odd. Still assume $k=\bar{k}$. By the exact sequence
\[0\to\mathbf{I}^{d+1}\to\mathbf{K}^{\mathrm{MW}}_{d}\xrightarrow{\tau}\mathbf{K}^{\mathrm{M}}_{d}\to0\]
we get an exact sequence
\[{\rm H}^{d-1}(X; \mathbf{K}^{\mathrm{MW}}_{d})\xrightarrow{\tau}{\rm H}^{d-1}(X; \mathbf{K}^{\mathrm{M}}_{d})\to{\rm H}^{d}(X; \mathbf{I}^{d+1}).\]
The Rost-Schmid complex for $\mathbf{I}^{d+1}$ says that ${\rm H}^{d}(X; \mathbf{I}^{d+1})$ is a subquotient of $\displaystyle\bigoplus_{x\in X^{(d)}}\mathbf{I}(\kappa_x)=\bigoplus_{x\in X^{(d)}}\mathbf{I}(\bar{k})=0$, thus ${\rm H}^{d}(X; \mathbf{I}^{d+1})=0$ and $\tau$ is surjective as well. In fact, more is true: by Voevodsky's confirmation of the Milnor conjecture, we have an isomorphism of sheaves of abelian groups $\bar{\mathbf{I}}^{d+j}\cong\mathscr{H}^{d+j}(d+j)\ (j\geqslant 1)$, where $\mathscr{H}^{d+j}(d+j)$ is the Zariski sheaf associated to the presheaf $U\mapsto{\rm H}^{d+j}_{\rm \acute{e}t}(U; \mu_2^{\otimes d+j})$; by reason of cohomological dimension, $\mathscr{H}^{d+j}(d+j)|_X=0\ (j\geqslant 1)$ (restricting to the Zariski site of $X$). Thus we have $\bar{\mathbf{I}}^{j}|_X=0, j>d,  \mathbf{I}^{d+1}|_X=\mathbf{I}^{d+2}|_X=\cdots$.

The Arason-Pfister Hauptsatz gives $\bigcap_{j\geqslant 1}\mathbf{I}^{d+j}=0$, thus $\mathbf{I}^{d+1}|_X=0$ and so $\tau: \mathbf{K}^{\mathrm{MW}}_{j}|_X\to\mathbf{K}^{\mathrm{M}}_{j}|_X$ is in fact an isomorphism for every $j\geqslant d$. This suffices to conclude that the induced maps on cohomologies $\tau: {\rm H}^{i}(X; \mathbf{K}^{\mathrm{MW}}_{j})\to{\rm H}^{i}(X; \mathbf{K}^{\mathrm{M}}_{j})$ for $j\geqslant d=\dim\ X$ are isomorphisms, since these sheaves are strictly $\mathbb{A}^1$-invariant, Nisnevich and Zariski cohomologies of $X$ coincide (and are computed by Rost-Schmid complexes).

The exact sequence \[0\to2\mathbf{K}^{\mathrm{M}}_{j}\to\mathbf{K}^{\mathrm{M}}_{j}\to\bar{\mathbf{I}}^{j}\to0\]
gives isomorphisms
\[{\rm H}^{i}(X; 2\mathbf{K}^{\mathrm{M}}_{j})\xrightarrow{\cong}{\rm H}^{i}(X; \mathbf{K}^{\mathrm{M}}_{j}), j>d.\]
We summarize the results as follows:
\begin{equation}
\begin{cases}
{\rm H}^{i}(X; \mathbf{I}^{j})=0,\ \ {\rm H}^{i}(X; 2\mathbf{K}^{\mathrm{M}}_{j})\cong{\rm H}^{i}(X; \mathbf{K}^{\mathrm{M}}_{j}),\ j>d; \\
\tau: {\rm H}^{i}(X; \mathbf{K}^{\mathrm{MW}}_{j})\xrightarrow{\cong}{\rm H}^{i}(X; \mathbf{K}^{\mathrm{M}}_{j}),\ j\geqslant d.
\end{cases}
\end{equation}

\begin{prop}\label{Delt-d-1}
	Let $k$ be an algebraically closed field with ${\rm char}(k)\ne 2$, $A$ a smooth affine $k$-algebra of Krull dimension $d\geqslant 3$, and $X={\rm Spec}(A)$, let $\xi$ be a stable oriented vector bundle over $X$, whose classifying map is still denoted $\xi: X\to\mathrm{BSL}$. If $\xi$ is represented by a rank $d-1$ vector bundle, then there is a bijection
	\[p_*^{-1}(\xi)\cong{\rm coker}\big({\rm SK}_1(X)\xrightarrow{\Delta(c_{d}, \xi)}{\rm H}^{d-1}(X; \mathbf{K}^{\mathrm{M}}_{d})\big).\]
	The homomorphism $\Delta(c_d, \xi)$ is given as follows: for $\beta\in{\rm SK}_1(X)=[X, \mathrm{SL}]_{\mathbb{A}^1}$,
	\[\Delta(c_d, \xi)\beta=(\Omega c_d)(\beta)+\sum_{r=1}^{d-1}((\Omega c_{r})(\beta))\cdot c_{d-r}(\xi).\]
	
	So $\Delta(c_d, \xi)([\beta_{d+1}(a, b)])=0$ for all $A$-point $(a, b)$ of $Q_{2d+1}$.
\end{prop}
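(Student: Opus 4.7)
The plan is to treat the two assertions separately, both as direct applications of the machinery assembled in the preceding sections.

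For the bijection and explicit formula, the strategy is to apply \Cref{lift=ck} together with the computational formula of \Cref{compute-delta} (equivalently \Cref{computedeltaform}) to the principal $\mathbb{A}^1$-homotopy fiber sequence
\[\mathrm{K}(\mathbf{K}^{\mathrm{M}}_{d}, d-1)\to E\to\mathrm{BSL}\xrightarrow{\theta}\mathrm{K}(\mathbf{K}^{\mathrm{M}}_{d}, d)\]
arising from the first non-trivial stage of the Moore--Postnikov factorization \eqref{MPfact} of $\varphi_{d-1}: \mathrm{BSL}_{d-1}\to\mathrm{BSL}$. This sequence is genuinely principal since $\mathrm{BSL}_{d-1}$ and $\mathrm{BSL}$ are $\mathbb{A}^1$-simply connected for $d-1\geqslant 2$. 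The crucial identification of $\theta$ with $c_d$ proceeds exactly as in the paragraph preceding \Cref{Delt-evend-1}: functoriality of the Moore--Postnikov tower applied to $\mathrm{BSL}_d\hookrightarrow\mathrm{BSL}$ gives $\theta\circ s_d=\tau\circ e_d=c_d$, and Serre's splitting lets us replace $\xi$ by a representative in $[X,\mathrm{BSL}_d]_{\mathbb{A}^1}$ to conclude $\theta_*(\xi)=c_d(\xi)$. When $d-1$ is even the coefficient sheaf $\pi_{d-1}^{\mathbb{A}^1}F_{d-1}$ is $\mathbf{K}^{\mathrm{MW}}_d$ rather than $\mathbf{K}^{\mathrm{M}}_d$, but under $k=\bar k$ the isomorphism $\tau:\hh^i(X;\mathbf{K}^{\mathrm{MW}}_j)\xrightarrow{\cong}\hh^i(X;\mathbf{K}^{\mathrm{M}}_j)$ for $j\geqslant d$, established in the cohomological discussion immediately preceding the statement, uniformizes the coefficient identification across both parities of $d-1$. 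Since $\xi$ is represented by a rank $d-1$ bundle we have $c_d(\xi)=0$, so the obstruction vanishes and \Cref{lift=ck} yields $p_*^{-1}(\xi)\cong\ck\Delta(c_d,\xi)$. The explicit formula is then the specialization of \Cref{compute-delta} at $n=d-1$, transcribed in the $\mathrm{BSL}$-setting using the $\mathrm{BSL}$-version of \Cref{bgl-comp}.

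The vanishing $\Delta(c_d, \xi)([\beta_{d+1}(a,b)])=0$ is then an immediate consequence of \Cref{nicesim} applied with index $n=d$: it yields $(\Omega c_r)(\beta_{d+1})=0$ in $\hh^{r-1}(Q_{2d+1};\mathbf{K}^{\mathrm{M}}_r)$ for every $r\neq d+1$. Pulling back along the $A$-point $(a,b):X\to Q_{2d+1}$ gives $(\Omega c_r)([\beta_{d+1}(a,b)])=0$ in $\hh^{r-1}(X;\mathbf{K}^{\mathrm{M}}_r)$ for every $r$ in the range $1\leqslant r\leqslant d$ that appears in the formula, since always $r<d+1$. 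Hence every summand on the right-hand side of
\[\Delta(c_d,\xi)[\beta_{d+1}(a,b)]=(\Omega c_d)[\beta_{d+1}(a,b)]+\sum_{r=1}^{d-1}\bigl((\Omega c_r)[\beta_{d+1}(a,b)]\bigr)\cdot c_{d-r}(\xi)\]
vanishes individually, finishing the argument.

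No step here looks genuinely obstructive: the only subtle bookkeeping is the uniformization of coefficient sheaves across the parity of $d-1$, and this is precisely what the cohomological vanishing results recalled immediately before the proposition (Arason--Pfister, Voevodsky's solution of the Milnor conjecture, and the cohomological dimension bound coming from $k=\bar k$) are designed to provide. Everything else is a mechanical specialization of the James--Thomas-type framework developed in \S 5--\S 6.
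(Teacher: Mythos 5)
Your proof is correct and follows essentially the same route as the paper: identifying the first-stage $k$-invariant with $c_d$ via Moore--Postnikov functoriality, applying \Cref{lift=ck} and \Cref{compute-delta} (in the $\mathrm{BSL}$ setting), using the $\tau:\mathbf{K}^{\mathrm{MW}}_d|_X\xrightarrow{\cong}\mathbf{K}^{\mathrm{M}}_d|_X$ isomorphism (valid over $k=\bar k$) to handle the parity where the coefficient sheaf is $\mathbf{K}^{\mathrm{MW}}_d$, and deducing the vanishing of $\Delta(c_d,\xi)([\beta_{d+1}(a,b)])$ from \Cref{nicesim} since every index $r\leqslant d$ occurring in the formula is strictly less than $d+1$.
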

\begin{proof}
	We already treated in \Cref{Delt-evend-1} the case when $d$ is even. For $d$ odd, since  $\pi_{d-1}^{\mathbb{A}^1}F_{d-1}\cong\mathbf{K}^{\mathrm{MW}}_d$, we have a similar  two-stage Moore-Postnikov factorization as in diagram (\ref{MPfact}), with $\mathbf{K}^{\mathrm{M}}_d$ replaced by $\mathbf{K}^{\mathrm{MW}}_d$ there.
	
	There are the following commutative diagrams:
	\[
	\begin{tikzcd}
	\mathrm{BSL}_{d}\arrow[r,"e_d"]\arrow[d, "s_d"']  & \mathrm{K}(\mathbf{K}^{\mathrm{MW}}_d, d) \arrow[d, "\tau"] \\
	\mathrm{BSL}\arrow[r,"c_d"]\ar[ru, "\theta"] & \mathrm{K}(\mathbf{K}^{\mathrm{M}}_d, d),
	\end{tikzcd}
	\qquad
	\text{and hence}
	\qquad
	\begin{tikzcd}[column sep = 6em]
	{\rm SK}_1(X) \arrow{r}{\Delta(\theta, \xi)} \arrow[rd, "{\Delta(c_d, \xi)}"']  & {\rm H}^{d-1}(X; \mathbf{K}^{\mathrm{MW}}_d) \arrow[d, "\tau"]\\
	& {\rm H}^{d-1}(X; \mathbf{K}^{\mathrm{M}}_d).
	\end{tikzcd}\]
	Since $\tau: \mathbf{K}^{\mathrm{MW}}_{d}|_X\to\mathbf{K}^{\mathrm{M}}_{d}|_X$ is an isomorphism, so is the right vertical map. Thus $\Delta(\theta, \xi)$ and $\Delta(c_d, \xi)$ are essentially the same. Therefore the result for the $d$ odd case holds as with the case when $d$ is even in \Cref{Delt-evend-1}.
	
	The last statement follows from \Cref{nicesim}.
\end{proof}
\begin{remark}
Since $c_d(\xi)=\tau\theta(\xi)$, we see that $\xi$ lifts to a class in $[X, E]_{\mathbb{A}^1}$ iff $c_d(\xi)=0$. While $\theta'_*$ maps $[X, E]_{\mathbb{A}^1}$ to $0$, hence no further obstruction. We thus get Murthy's splitting result \cite{Mur94} for oriented rank $d$ vector bundles: Let $X$ be a smooth affine variety of dimension $d$ over an algebraically closed field $k$, then an oriented rank $d$ vector bundle $\xi$ over $X$ splits off a trivial line bundle iff $c_d(\xi)=0$. See also \cite[Remark 1.33]{Mor}.
\end{remark}
\begin{prop}\label{divisability-cancel-d-1}
	Assume that $k=\bar{k}$ and ${\rm char} (k)\neq 2$. Let $A$ be a smooth $k$-algebra of Krull dimension $d\geqslant 3$. Let $X={\rm Spec}\ A$, $\xi\in[X, \mathrm{BSL}]_{\mathbb{A}^1}$ which is represented by a rank $d-1$ vector bundle.
	\begin{enumerate}[label=\emph{(\arabic*)}]
		\item We have $(d-1)!\cdot{\rm H}^{d-1}(X; \mathbf{K}^{\mathrm{M}}_{d})\subset{\rm im}\Delta(c_d, \xi)$.
		\item If ${\rm char}(k)=0$ or ${\rm char}(k)\geqslant d$, then the lifting set $p_*^{-1}(\xi)\subset[X, E]_{\mathbb{A}^1}$ is a singleton. So the map
		\[\theta_*: \pi_1({\rm RMap}(X, \mathrm{BSL}), \xi)\to\pi_1({\rm RMap}(X, \mathrm{K}(\mathbf{K}^{\mathrm{M}}_d, d)), 0)={\rm H}^{d-1}(X; \mathbf{K}^{\mathrm{M}}_{d})\]
		is surjective.
	\end{enumerate}
\end{prop}
\begin{proof}
	This is along the same lines as the proof of \Cref{divisability-cancel}. We only briefly write down some points. Using the Postnikov tower of $\mathbb{A}^d\setminus0$, it is easy to see that we have a surjective map $[X, \mathbb{A}^d\setminus0]_{\mathbb{A}^1}\twoheadrightarrow{\rm H}^{d-1}(X; \mathbf{K}^{\mathrm{MW}}_{d})$. Thus the composite
	\[{\rm Um}_d(A)/{\rm E}_d(A)=[X, Q_{2d-1}]_{\mathbb{A}^1}=[X, \mathbb{A}^d\setminus0]_{\mathbb{A}^1}\twoheadrightarrow{\rm H}^{d-1}(X; \mathbf{K}^{\mathrm{MW}}_{d})\xrightarrow{\tau}{\rm H}^{d-1}(X; \mathbf{K}^{\mathrm{M}}_{d})\]
	is surjective. So every element in ${\rm H}^{d-1}(X; \mathbf{K}^{\mathrm{M}}_{d})$ is the image of some $[a, b]$ with $(a, b)\in Q_{2d-1}(A), a=(a_1, \cdots, a_d)\in{\rm Um}_{d}(A)$ which we write as $\tau([a, b])$, by abusing notaion as in the proof of \Cref{divisability-cancel}.
	
	We will show
	\[(d-1)!\cdot\tau([a, b])=\pm\Delta(c_d, \xi)([\beta_d(a, b)]).\]
	
	As in the proof of \Cref{divisability-cancel} (from \cref{chern-sus} to the end, where $X$ essentially plays no role), we have
	\[\tau(({\rm pr}_1)_*[\beta])=\tau([(1,0,\cdots,0)\cdot\beta])=\pm(\Omega c_d)([\beta]), \text{for}\ \beta\in{\rm SL}_d(A)\subset{\rm SL}(A)\]
	and
	\begin{equation*}
	(\Omega c_{r})(\alpha_d)= \begin{cases}
	0,  & r\neq d; \\
	\pm(d-1)!\in\mathbf{K}^{\mathrm{M}}_{0}(k)=\mathbb{Z}, & r= d.
	\end{cases}
	\end{equation*}
	We get
	\[\Delta(c_d, \xi)([\beta_d(a, b)])=(\Omega c_d)([\beta_d(a, b)]).\]
	Thus
	\[
	\begin{split}
	(d-1)!\cdot\tau([a, b])&=\tau((d-1)!\cdot[a, b])=\tau((\psi_d)_*[a, b])=\tau(({\rm pr}_1)_*[\beta_d(a, b)])\\
	&=\pm(\Omega c_d)([\beta_d(a, b)])=\pm\Delta(c_d, \xi)([\beta_d(a, b)]).
	\end{split}
	\]
	This finishes proving (1). Statement (2) then follows easily from (1), the divisibility result in \Cref{divofcohmil} and \cite[Chapter I, Lemma 7.3]{GJ}.
\end{proof}

Finally we study the map $q_*: [X, E']_{\mathbb{A}^1}\to[X, E]_{\mathbb{A}^1}$. By the discussion in \Cref{hogpfibs}, we get exact sequences
\begin{equation}
\begin{cases}
\pi_{d}^{\mathbb{A}^1}(\mathbb{A}^{d}\setminus 0)\to \pi_{d}^{\mathbb{A}^1}F_{d-1}\to\mathbf{K}^{\mathrm{M}}_{d+1}=\pi_d^{\mathbb{A}^1}F_d\to0,  & d\ \text{odd}; \\
\pi_{d}^{\mathbb{A}^1}(\mathbb{A}^{d}\setminus 0)\to \pi_{d}^{\mathbb{A}^1}F_{d-1}\to2\mathbf{K}^{\mathrm{M}}_{d+1}\to0,  & d\ \text{even},
\end{cases}
\end{equation}
where in the case $d$ even, the term $2\mathbf{K}^{\mathrm{M}}_{d+1}$ sits in an exact sequence $0\to2\mathbf{K}^{\mathrm{M}}_{d+1}\to\mathbf{K}^{\mathrm{MW}}_{d+1}=\pi_d^{\mathbb{A}^1}F_d\to\mathbf{I}^{d+1}\to0$, and $\mathbf{I}^{d+1}|_X=0$ if $k=\bar{k}$, telling that the canonical homomorphism $2\mathbf{K}^{\mathrm{M}}_{d+1}\to\mathbf{K}^{\mathrm{MW}}_{d+1}=\pi_d^{\mathbb{A}^1}F_d$ induces an isomorphism ${\rm H}^d(X; 2\mathbf{K}^{\mathrm{M}}_{d+1})\cong{\rm H}^d(X; \pi_d^{\mathbb{A}^1}F_d)$. Using the fact that the Nisnevich cohomological dimension of $X$ is bounded above by $\dim(X)=d$, we get exact sequences for highest degree cohomology:
\begin{equation*}
\begin{cases}
{\rm H}^d(X; \pi_{d}^{\mathbb{A}^1}(\mathbb{A}^{d}\setminus 0))\to{\rm H}^d(X; \pi_{d}^{\mathbb{A}^1}F_{d-1})\to{\rm H}^d(X; \mathbf{K}^{\mathrm{M}}_{d+1})\to0,  & d\ \text{odd}; \\
{\rm H}^d(X; \pi_{d}^{\mathbb{A}^1}(\mathbb{A}^{d}\setminus 0))\to{\rm H}^d(X; \pi_{d}^{\mathbb{A}^1}F_{d-1})\to{\rm H}^d(X; 2\mathbf{K}^{\mathrm{M}}_{d+1})\to0,  & d\ \text{even}.
\end{cases}
\end{equation*}
If $k=\bar{k}$, then these two exact sequences become one:
\begin{equation}\label{mainvan}
{\rm H}^d(X; \pi_{d}^{\mathbb{A}^1}(\mathbb{A}^{d}\setminus 0))\to{\rm H}^d(X; \pi_{d}^{\mathbb{A}^1}F_{d-1})\to{\rm H}^d(X; \mathbf{K}^{\mathrm{M}}_{d+1})\cong{\rm H}^d(X; \pi_d^{\mathbb{A}^1}F_d)\to0, d\geqslant 3.
\end{equation}

We now invoke the following conjecture of Asok-Fasel describing $\pi_{d}^{\mathbb{A}^1}(\mathbb{A}^{d}\setminus 0)$ (see \cite[Conjecture 7]{AF13}).
\begin{cj}[Asok-Fasel]\label{afcj}
	Let $k$ be a perfect field with ${\rm char}(k)\ne 2$, then there is a sequence
	\begin{equation}
	\mathbf{K}^{\mathrm{M}}_{d+2}/24\to\pi_{d}^{\mathbb{A}^1}(\mathbb{A}^{d}\setminus 0)\to\mathbf{GW}^d_{d+1}\to0
	\end{equation}
	of homomorphisms in $\ab_{k}^{\mathbb{A}^1}$ which is exact at $\pi_{d}^{\mathbb{A}^1}(\mathbb{A}^{d}\setminus 0)$ and also becomes exact at $\mathbf{GW}^d_{d+1}$ after $d-3$-fold contraction.
\end{cj}

This gives an exact sequence
\begin{equation}
{\rm H}^d(X; \mathbf{K}^{\mathrm{M}}_{d+2}/24)\to{\rm H}^d(X; \pi_{d}^{\mathbb{A}^1}(\mathbb{A}^{d}\setminus 0))\to{\rm H}^d(X; \mathbf{GW}^d_{d+1})\to0
\end{equation}
if $X$ is a smooth $k$-scheme of dimension $d$.

The exact sequence on cohomologies above follows easily from the statement on contracted sheaves by a diagram chase using Rost-Schmid complexes. This conjecture is stably true after $d$-fold contractions, as confirmed by the recently published work \cite{RSO} of R\"ondigs-Spitzweck-{\O}stv{\ae}r.

If $k=\bar{k}$, then any generator of the group $\mathbf{K}^{\mathrm{M}}_2(k)$ can be written in the form $\{a^{24}, b\}=24\{a, b\}$ by the group law of Milnor K-theory. Thus $\mathbf{K}^{\mathrm{M}}_2(k)/24=0$. Since ${\rm H}^d(X; \mathbf{K}^{\mathrm{M}}_{d+2}/24)$ is a subquotient of $\displaystyle\bigoplus_{x\in X^{(d)}}\mathbf{K}^{\mathrm{M}}_2(\kappa_x)/24\cong\bigoplus_{x\in X^{(d)}}\mathbf{K}^{\mathrm{M}}_2(k)/24=0$, we see ${\rm H}^d(X; \mathbf{K}^{\mathrm{M}}_{d+2}/24)=0$ and so if \Cref{afcj} holds, then
\begin{equation}
{\rm H}^d(X; \pi_{d}^{\mathbb{A}^1}(\mathbb{A}^{d}\setminus 0))\xrightarrow{\cong}{\rm H}^d(X; \mathbf{GW}^d_{d+1}).
\end{equation}

We have the sheafified Karoubi periodicity sequences
\[\mathbf{K}^{\mathrm{Q}}_{d+1}\xrightarrow{H}\mathbf{GW}^d_{d+1}\xrightarrow{\eta}\mathbf{GW}^{d-1}_d\xrightarrow{f}\mathbf{K}^{\mathrm{Q}}_d\]
in $\ab_{k}^{\mathbb{A}^1}$, which is exact. Let $\mathbf{A}:={\rm im}(H), \mathbf{B}:={\rm im}(\eta)$, then we have exact sequences
\[\mathbf{K}^{\mathrm{Q}}_{d+1}\xrightarrow{H}\mathbf{A}\to0,\]
\[0\to\mathbf{A}\to\mathbf{GW}^d_{d+1}\xrightarrow{\eta}\mathbf{B}\to0,\]
yielding exact sequences on cohomologies:
\[
{\rm H}^d(X; \mathbf{K}^{\mathrm{M}}_{d+1})\cong{\rm H}^d(X; \mathbf{K}^{\mathrm{Q}}_{d+1})\to{\rm H}^d(X; \mathbf{A})\to0,\]
\[
{\rm H}^d(X; \mathbf{A})\to{\rm H}^d(X; \mathbf{GW}^d_{d+1})\to{\rm H}^d(X; \mathbf{B})\to0\]
and hence
\[
{\rm H}^d(X; \mathbf{K}^{\mathrm{M}}_{d+1})\to{\rm H}^d(X; \mathbf{GW}^d_{d+1})\to{\rm H}^d(X; \mathbf{B})\to0.\]

Contracting the sheafified Karoubi periodicity sequence $d$-times we get an exact sequence $\mathbf{K}^{\mathrm{M}}_1\xrightarrow{H}\mathbf{GW}^0_1\xrightarrow{\eta}\mathbf{B}_{-d}\to0$. While the composite $\mathbf{K}^{\mathrm{M}}_1\xrightarrow{H}\mathbf{GW}^3_1\xrightarrow{f}\mathbf{K}^{\mathrm{M}}_1$ is multiplication by $2$, we see the composite $2\mathbf{K}^{\mathrm{M}}_1\hookrightarrow\mathbf{K}^{\mathrm{M}}_1\xrightarrow{H}\mathbf{GW}^0_1$ is $0$, so we have an exact sequence $\mathbf{K}^{\mathrm{M}}_1/2\xrightarrow{H}\mathbf{GW}^0_1\xrightarrow{\eta}\mathbf{B}_{-d}\to0$, which splits into two: $\mathbf{K}^{\mathrm{M}}_1/2\xrightarrow{H}\mathbf{A}_{-d}\to0, 0\to\mathbf{A}_{-d}\to\mathbf{GW}^0_1\xrightarrow{\eta}\mathbf{B}_{-d}\to0$. Using again Rost-Schmid complexes we find an exact sequence ${\rm H}^d(X; \mathbf{K}^{\mathrm{M}}_{d+1}/2)\to{\rm H}^d(X; \mathbf{A})\to0$, with ${\rm H}^d(X; \mathbf{A})\to{\rm H}^d(X; \mathbf{GW}^d_{d+1})\to{\rm H}^d(X; \mathbf{B})\to0$ we obtain an exact sequence
\begin{equation}
0={\rm H}^d(X; \mathbf{K}^{\mathrm{M}}_{d+1}/2)\to{\rm H}^d(X; \mathbf{GW}^d_{d+1})\to{\rm H}^d(X; \mathbf{B})\to0.
\end{equation}

By \cite[Lemma 3.6.3]{AF15}, we have ${\rm H}^d(X; \mathbf{B})\cong{\rm Ch}^d(X)$, where ${\rm Ch}^d(X)={\rm H}^d(X; \mathbf{K}^{\mathrm{M}}_d/2)\cong{\rm CH}^d(X)/2$ is the group of mod-$2$ codimension-$d$ cycle classes on $X$. Since $k=\bar{k}$, we have ${\rm Ch}^d(X)=0$ (in fact, ${\rm CH}^d(X)$ is uniquely divisible, see e.g. \cite{Sr0cy} for a discussion). Thus ${\rm H}^d(X; \mathbf{GW}^d_{d+1})=0$ and so ${\rm H}^d(X; \pi_{d}^{\mathbb{A}^1}(\mathbb{A}^{d}\setminus 0))=0$ (assuming \Cref{afcj}). And by (\ref{mainvan}),
\begin{equation}\label{cohodiffib}
{\rm H}^d(X; \pi_{d}^{\mathbb{A}^1}F_{d-1})\xrightarrow{\cong}{\rm H}^d(X; \pi_d^{\mathbb{A}^1}F_d)\cong{\rm H}^d(X; \mathbf{K}^{\mathrm{M}}_{d+1})\cong[X, \mathbb{A}^{d+1}\setminus0]_{\mathbb{A}^1}\cong[X, Q_{2d+1}]_{\mathbb{A}^1}, d\geqslant 3.
\end{equation}

\begin{prop}\label{stage2cancel-d-1}
	Assume that $k=\bar{k}$ and ${\rm char} (k)\neq 2$. Let $A$ be a smooth $k$-algebra of Krull dimension $d\geqslant 3$, let $X={\rm Spec}\ A$. \emph{Assume \Cref{afcj} holds.} Then the map $q_*: [X, E']_{\mathbb{A}^1}\to[X, E]_{\mathbb{A}^1}$ is a bijection.
\end{prop}
\begin{proof}
	Since there is an $\mathbb{A}^1$-homotopy fibre sequence
	\[\mathrm{K}(\pi_{d}^{\mathbb{A}^1}F_{d-1}, d)\to E'\xrightarrow{q}E\xrightarrow{\theta'}\mathrm{K}(\pi_{d}^{\mathbb{A}^1}F_{d-1}, d+1),\]
	this shows $q_*$ is surjective, and gives a homotopy fibre sequence in $\sset_*$:
	\[{\rm RMap}(X, E')\xrightarrow{q_*}{\rm RMap}(X, E)\xrightarrow{\theta'_*}{\rm RMap}(X, \mathrm{K}(\pi_{d}^{\mathbb{A}^1}F_{d-1}, d+1)).\]
	
	By \cite[Chapter I, Lemma 7.3]{GJ}, for any $\xi\in[X, \mathrm{BSL}]_{\mathbb{A}^1}$ which is represented by a rank $d-1$ vector bundle (or equivalently, $c_d(\xi)=0$), let $\xi_E\in[X, E]_{\mathbb{A}^1}$ be the unique lifting of $\xi$ as in (\ref{MPfact}) (so $\theta'_*(\xi_E)=0\in{\rm H}^{d+1}(X; \pi_{d}^{\mathbb{A}^1}F_{d-1})$), to show the injectivity of $q_*$, it suffices to show the surjectivity of the map
	\[\theta'_*: \pi_{1}({\rm RMap}(X, E), \xi_E)\to\pi_{1}({\rm RMap}(X, \mathrm{K}(\pi_{d}^{\mathbb{A}^1}F_{d-1}, d+1)), 0)={\rm H}^d(X; \pi_{d}^{\mathbb{A}^1}F_{d-1}).\]
	
	Consider now the comparison diagram of Moore-Postnikov towers
	\[
	\xymatrix{F_{d-1}\ar[r] \ar[d] & {\rm BSL}_{d-1}\ar[r] \ar[d]& E'\ar[r]^{q} \ar[d]&  E \ar[r]^{p}\ar[d]^{p} &{\rm BSL} \ar@{=}[d]\\
		F_d\ar[r] &{\rm BSL}_{d} \ar[r] & \tilde{E} \ar[r] &  {\rm BSL} \ar@{=}[r] & {\rm BSL},
	}
	\]
	where $\tilde{E}$ is the first stage in the Moore-Postnikov tower of the map ${\rm BSL}_{d}\to{\rm BSL}$. By functoriality, the first stage $k$-invariants (that of the column of $E$) gives a commutative square
	\begin{equation}
	\begin{tikzcd}
	E \arrow[r, "\theta'"] \arrow[d, "p"']  & \mathrm{K}(\pi_{d}^{\mathbb{A}^1}F_{d-1}, d+1)  \arrow[d] \\
	{\rm BSL} \arrow[r, "\tilde{\theta}"]          & \mathrm{K}(\pi_{d}^{\mathbb{A}^1}F_d, d+1),
	\end{tikzcd}
	\label{compkinv}
	\end{equation}
	where $\tilde{\theta}$ is the $k$-invariant ``$\theta$'' in the rank $d$ case, we write it as $\tilde{\theta}$ to distinguish it from the $k$-invariant ``$\theta$'' in the rank $d-1$ case here) and the right vertical map is induced by the map $F_{d-1}\to F_d$. Applying ${\rm RMap}(X, -)$ we obtain a commutative diagram
	\begin{equation}
	\begin{tikzcd}
	{\rm RMap}(X, E) \arrow[r, "\theta'"] \arrow[d, "p"']  & {\rm RMap}(X, \mathrm{K}(\pi_{d}^{\mathbb{A}^1}F_{d-1}, d+1))  \arrow[d] \\
	{\rm RMap}(X, {\rm BSL}) \arrow[r, "\tilde{\theta}"]          & {\rm RMap}(X, \mathrm{K}(\pi_{d}^{\mathbb{A}^1}F_d, d+1)),
	\end{tikzcd}
	\end{equation}
	and hence
	\begin{equation}\label{maindiagram}
	\begin{tikzcd}
	c''\in\pi_{1}({\rm RMap}(X, E), \xi_E) \arrow[r, "\theta'_*"] \arrow[d, "p_*"'] & \pi_{1}({\rm RMap}(X, \mathrm{K}(\pi_{d}^{\mathbb{A}^1}F_{d-1}, d+1)), 0)\ni c  \arrow[d, "\cong"] \\
	T_{\xi}[\beta_{d+1}(a, b)]\in\pi_{1}({\rm RMap}(X, {\rm BSL}), \xi) \arrow[r, "\tilde{\theta}_*"]          & \pi_{1}({\rm RMap}(X, \mathrm{K}(\pi_{d}^{\mathbb{A}^1}F_d, d+1)), 0)\ni c \ar[d, equal]\\
	{[\beta_{d+1}(a, b)]\in\pi_1({\rm RMap}(X, \mathrm{BSL}), 0)}\ar[u, "T_{\xi}"]\ar[r, "{\Delta(c_{d+1}, \xi)}"]  & {{\rm H}^{d}(X; \mathbf{K}^{\mathrm{M}}_{d+1})\ni c}\\
	{[X, Q_{2d+1}]_{\mathbb{A}^1}\cong{\rm H}^{d}(X; \mathbf{K}^{\mathrm{MW}}_{d+1})\ni c'=[a, b]} \ar[u, "\beta_{d+1}"] \ar[ur, "\pm d!"'],
	\end{tikzcd}
	\end{equation}
	where the arrow $\tilde{\theta}_*$ is surjective by \Cref{divisability-cancel,divisability-cancel-even}, and the right vertical maps are isomorphisms by (\ref{cohodiffib}); the middle square commutes by \Cref{chern-delta}, and the lower triangle is given by (\ref{chern-sus-d!}), the arrow labeled by $\pm d!$ is surjective (see \Cref{d!-div}) . So for any $c\in\pi_{1}({\rm RMap}(X, \mathrm{K}(\pi_{d}^{\mathbb{A}^1}F_{d-1}, d+1)), 0)={\rm H}^{d}(X; \mathbf{K}^{\mathrm{M}}_{d+1})$, we can find $c'=[a, b]\in[X, Q_{2d+1}]_{\mathbb{A}^1}$ with $c=\pm d!\cdot[a, b]=\Delta(c_{d+1}, \xi)([\beta_{d+1}(a, b)])$.
	
	\Cref{chern-delta} (again) and the last statement of \Cref{Delt-d-1} tell that
	\[\theta_*(T_{\xi}[\beta_{d+1}(a, b)])=(c_d)_*(T_{\xi}[\beta_{d+1}(a, b)])=0.\]
	
	On the other hand, the fiber sequence
	\[{\rm RMap}(X, E)_{\xi_E}\xrightarrow{p}{\rm RMap}(X, {\rm BSL})_{\xi}\xrightarrow{\theta}{\rm RMap}(X, \mathrm{K}(\mathbf{K}^{\mathrm{M(W)}}_d, d))_0\]
	in $\sset_*$, where the subscripts refer the corresponding components, induces another
	\[\Omega_{\xi_E}{\rm RMap}(X, E)\xrightarrow{p}\Omega_{\xi}{\rm RMap}(X, {\rm BSL})\xrightarrow{\theta}{\rm RMap}(X, \mathrm{K}(\mathbf{K}^{\mathrm{M(W)}}_d, d-1)).\]
	Thus $T_{\xi}[\beta_{d+1}(a, b)]\in\ker(\theta_*)={\rm im}(p_*)$, we see there exists $c''\in\pi_{1}({\rm RMap}(X, E), \xi_E)$ with $p_*(c'')=T_{\xi}[\beta_{d+1}(a, b)]$, which then satisfies $\theta_*'(c'')=c$, proving that the map
	\[\theta'_*: \pi_{1}({\rm RMap}(X, E), \xi_E)\to\pi_{1}({\rm RMap}(X, \mathrm{K}(\pi_{d}^{\mathbb{A}^1}F_{d-1}, d+1)), 0)={\rm H}^d(X; \pi_{d}^{\mathbb{A}^1}F_{d-1})\]
	is surjective. Hence $q_*$ is injective as well.
\end{proof}

We finally arrive at the following cancellation result for (oriented) rank $d-1$ vector bundles over a smooth affine variety of dimension $d$, admitting Asok-Fasel conjecture. (As before, we let $\varphi=\varphi_{d-1}: \mathrm{BSL}_{d-1}\to\mathrm{BSL}$ be the stabilizing map.)

\begin{theorem}\label{cancel-d-1}
	Assume that the base field $k$ is algebraically closed and ${\rm char} (k)\neq 2$. Let $A$ be a smooth $k$-algebra of Krull dimension $d\geqslant 3$. Let $X={\rm Spec}\ A$, $\xi\in[X, \mathrm{BSL}]_{\mathbb{A}^1}$ which is represented by a rank $d-1$ vector bundle (or equivalently, $c_d(\xi)=0$). Let $\varphi_*: [X, \mathrm{BSL}_{d-1}]_{\mathbb{A}^1}\to[X, \mathrm{BSL}]_{\mathbb{A}^1}$ be the stabilizing map. If ${\rm char}(k)=0$ or ${\rm char}(k)\geqslant d$, then the lifting set $\varphi_*^{-1}(\xi)\subset[X, \mathrm{BSL}_{d-1}]_{\mathbb{A}^1}$ is a singleton, provided \Cref{afcj} holds. In other words, every oriented rank $d-1$ vector bundle over $X$ is cancellative.
\end{theorem}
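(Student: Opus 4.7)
The plan is to assemble the two main preceding results into the two-stage Moore-Postnikov factorization \eqref{MPfact} of $\varphi=\varphi_{d-1}: \mathrm{BSL}_{d-1}\to\mathrm{BSL}$, namely
\[\mathrm{BSL}_{d-1}\xrightarrow{q'}E'\xrightarrow{q}E\xrightarrow{p}\mathrm{BSL}.\]
Since $\varphi_*=p_*\circ q_*\circ q'_*$, the fiber $\varphi_*^{-1}(\xi)$ is identified with $(q'_*)^{-1}\bigl((q_*)^{-1}(p_*^{-1}(\xi))\bigr)$, so it suffices to show that each of the three factors contributes at most one element over $\xi$.

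First I would recall that, by the general properties of the motivic Moore-Postnikov tower recorded in \Cref{MotMP}, the fibers $F(q'_n)$ lying above the stage $E'$ are Eilenberg--MacLane spaces $\mathrm{K}(\pi_n^{\mathbb{A}^1}F_{d-1},n)$ with $n\geqslant d+1$. Since $X$ is a smooth affine $k$-scheme of dimension $d$ and the relevant sheaves are strictly $\mathbb{A}^1$-invariant, the cohomology $\mathrm{H}^j(X;\pi_n^{\mathbb{A}^1}F_{d-1})$ vanishes for $j>d$. Crawling up the tower via the exact sequences of \Cref{motlesfib}, the induced map $q'_*:[X,\mathrm{BSL}_{d-1}]_{\mathbb{A}^1}\to[X,E']_{\mathbb{A}^1}$ is therefore a bijection. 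This reduces the statement to showing that $q_*^{-1}(p_*^{-1}(\xi))$ is a singleton.

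Next, under the hypothesis $c_d(\xi)=0$ (which holds since $\xi$ is represented by a rank $d-1$ bundle) and the hypothesis ${\rm char}(k)=0$ or ${\rm char}(k)\geqslant d$, I would invoke \Cref{divisability-cancel-d-1}(3): it tells us precisely that $p_*^{-1}(\xi)\subset[X,E]_{\mathbb{A}^1}$ consists of a unique element $\xi_E$, built from the surjectivity of $\theta_*:\pi_1({\rm RMap}(X,\mathrm{BSL}),\xi)\to\mathrm{H}^{d-1}(X;\mathbf{K}^{\mathrm{M}}_d)$ combined with the $(d{-}1)!$-divisibility statement of \Cref{divofcohmil} and the orbit-stabilizer computation in the last part of \Cref{lesfib}. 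Finally, admitting \Cref{afcj}, I would apply \Cref{stage2cancel-d-1} which says that $q_*:[X,E']_{\mathbb{A}^1}\to[X,E]_{\mathbb{A}^1}$ is a bijection; hence $q_*^{-1}(\xi_E)$ is a singleton. Chaining these three bijections yields $|\varphi_*^{-1}(\xi)|=1$, which translates into the cancellation of any oriented rank $d-1$ bundle over $X$ via affine representability.

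The substantive content has already been extracted into \Cref{divisability-cancel-d-1,stage2cancel-d-1}, so the "proof" at this stage is essentially a bookkeeping exercise---the main obstacle is purely one of organization: one must be careful that the base point $\xi_E\in[X,E]_{\mathbb{A}^1}$ used when invoking \Cref{stage2cancel-d-1} really is the unique lifting produced by \Cref{divisability-cancel-d-1}(3), and that all homotopy-class computations are performed consistently in the pointed category ${\rm Ho}(\sset_*)$ so that the fiber-sequence orbit/stabilizer arguments of \Cref{lesfib} transport unambiguously to $\EuScript{H}^{\mathbb{A}^1}_*(k)$ via the $\mathbb{A}^1$-fibrant replacement $\mathrm{L}_{\mathbb{A}^1}$, as in the reductions used throughout \S 5--\S 7.
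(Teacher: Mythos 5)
Your proof is correct and matches the paper's intended (implicit) argument exactly: the paper states this theorem as a culminating corollary of the preceding discussion, with the three ingredients being the bijection $q'_*$ noted just after \cref{MPfact}, the singleton fiber $p_*^{-1}(\xi)$ from \Cref{divisability-cancel-d-1}(3), and the bijection $q_*$ from \Cref{stage2cancel-d-1} under \Cref{afcj}. Your explanation that $q'_*$ is a bijection by cohomological-dimension vanishing in the upper stages of the tower is the right justification for what the paper calls ``easy to see,'' and the chaining $\varphi_*=p_*\circ q_*\circ q'_*$ is exactly the bookkeeping the paper leaves to the reader.
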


\section{Application to cancellation properties of symplectic vector bundles}

In this last section, we give enumeration results and in particular study cancellation properties of symplectic vector bundles, using the ideas and methods of the previous sections.

By \cite[Theorems 2.3.5, 3.3.3, and 4.1.2]{AHW2} (see also \cite{PW10}), the set of classes of rank $2n$ symplectic vector bundles over a smooth affine scheme over a perfect field is represented by the motivic space $\mathrm{BSp}_{2n}$.

The map $\mathrm{Sp}_{2n}\to\mathrm{Sp}_{2n+2}, A\mapsto\left( \begin{array} { c c } { A } & \\  & { \rm I_2 }  \end{array} \right)$ induces an $\mathbb{A}^1$-homotopy fiber sequence
\[\mathbb{A}^{2n+2}\setminus{0}\simeq\mathrm{Sp}_{2n+2}/\mathrm{Sp}_{2n}\to\mathrm{BSp}_{2n}\to\mathrm{BSp}_{2n+2}.\]
Since
\[\pi_i^{\mathbb{A}^1}(\mathbb{A}^{2n+2}\setminus 0)=\begin{cases}
0,  & i<2n+1, \\
\mathbf{K}^{\mathrm{MW}}_{2n+2},  & i=2n+1,
\end{cases}\]
we find that the map $\pi_i^{\mathbb{A}^1}\mathrm{BSp}_{2n}\to\pi_i^{\mathbb{A}^1}\mathrm{BSp}_{2n+2}$ is an isomorphism if $i<2n+1$ and a surjection if $i=2n+1$. Thus also,  the induced map $\pi_i^{\mathbb{A}^1}\mathrm{BSp}_{2n}\to\pi_i^{\mathbb{A}^1}\mathrm{BSp}$ is an isomorphism if $i<2n+1$ and a surjection if $i=2n+1$ (see also \cite[Theorems 2.6 and 3.3]{AF14}).

Let $F'_n$ be the $\mathbb{A}^1$-homotopy fiber of the canonical map $\varphi: \mathrm{BSp}_{2n}\rightarrow\mathrm{BSp}$. We then have an $\mathbb{A}^1$-homotopy fiber sequence
\[F'_n\to\mathrm{BSp}_{2n}\xrightarrow{\varphi}\mathrm{BSp},\]
and from the above results we see that 
\[\pi_j^{\mathbb{A}^1}F'_n=0, j<2n+1.\]
As before, we have an $\mathbb{A}^1$-homotopy fiber sequence
\[\mathbb{A}^{2n+2}\setminus{0}\to F'_n\to F'_{n+1},\]
which then gives
$\pi_{2n+1}^{\mathbb{A}^1}(\mathbb{A}^{2n+2}\setminus{0})\xrightarrow{\cong}\pi_{2n+1}^{\mathbb{A}^1}F'_n$ and we obtain
\[\pi_i^{\mathbb{A}^1}F'_n=\begin{cases}
0,  & i<2n+1, \\
\mathbf{K}^{\mathrm{MW}}_{2n+2},  & i=2n+1.
\end{cases}\]

The motivic space $\mathrm{BSp}\times\mathbb{Z}$ represents symplectic K-theory in the motivic homotopy category $\EuScript{H}^{\mathbb{A}^1}(k)$, and so $\pi_i^{\mathbb{A}^1}\mathrm{BSp}\cong\mathbf{K}^{\mathrm{Sp}}_i, i\geqslant 1$ and $\pi_1^{\mathbb{A}^1}\mathrm{BSp}\cong\mathbf{K}^{\mathrm{Sp}}_1=0$. Moreover, there is a motivic $T^{\wedge4}$-spectrum with term $\mathrm{BSp}\times\mathbb{Z}$ at each level (see e.g. \cite[Theorem 3]{SchT}), yielding as before the following result.
\begin{prop}\label{bsp-comp}
	If $k$ is a perfect field, then $\mathrm{BSp}$ is an abelian group object in the pointed $\mathbb{A}^1$-homotopy category $\EuScript{H}^{\mathbb{A}^1}_*(k)$ and for any $X\in\EuScript{S}\mathsf{m}_k$, all the components of the derived mapping space ${\rm RMap}(X, \mathrm{BSp})$ are weakly equivalent.
\end{prop}
Thus there is a canonical map $m: \mathrm{BSp}\times\mathrm{BSp}\to\mathrm{BSp}$ in the pointed $\mathbb{A}^1$-homotopy category $\EuScript{H}^{\mathbb{A}^1}_*(k)$ giving the abelian group object structure of $\mathrm{BSp}$. Given $\xi\in[X, \mathrm{BSp}]_{\mathbb{A}^1}$, let \[T_{\xi}:=m(-, \xi)_*: \pi_1({\rm RMap}(X, \mathrm{BSp}), 0)\to\pi_1({\rm RMap}(X, \mathrm{BSp}), \xi)\]
be the isomorphism on fundamental groups induced by $m(-, \xi): {\rm RMap}(X, \mathrm{BSp})_0\to{\rm RMap}(X, \mathrm{BSp})_{\xi}$ as introduced in \Cref{abhspcomp}.

\vspace{4mm}

Let $E=E_{2n}$ be the first non-trivial stage of the Moore-Postnikov tower of the canonical map $\mathrm{BSp}_{2n}\rightarrow\mathrm{BSp}$. Then we obtain a principal $\mathbb{A}^1$-homotopy fiber sequence
\[\mathrm{K}(\mathbf{K}^{\mathrm{MW}}_{2n+2}, 2n+1)\to E\to\mathrm{BSp}\]
classified by a $k$-invariant $b_{n+1}\in\widetilde{\rm CH}^{2n+2}(\mathrm{BSp})={\rm H}^{2n+2}(\mathrm{BSp}; \mathbf{K}^{\mathrm{MW}}_{2n+2})$ (which is given by a map $b_{n+1}: \mathrm{BSp}\to\mathrm{K}(\mathbf{K}^{\mathrm{MW}}_{2n+2}, 2n+2)$ in $\EuScript{H}^{\mathbb{A}^1}(k)$).

We have a commutative (indeed, cartesian) square
\[
\begin{tikzcd}
\mathrm{Sp}_{2n} \ar[r]\ar[d] &\mathrm{Sp}_{2n+2}\ar[d]\\
\mathrm{SL}_{2n+1} \ar[r] &\mathrm{SL}_{2n+2},
\end{tikzcd}\]
giving a commutative diagram
\[
\begin{tikzcd}
\mathrm{BSp}_{2n} \ar[r]\ar[d] &\mathrm{BSp}_{2n+2}\ar[d]\\
\mathrm{BSL}_{2n+1} \ar[r] &\mathrm{BSL}_{2n+2},
\end{tikzcd}\]
which in turn, by naturality of $k$-invariants, yields a commutative diagram
\begin{equation}
\begin{tikzcd}\label{bsp-kinv}
\mathrm{BSp}_{2n+2} \ar[r, "b_{n+1}"]\ar[d] &\mathrm{K}(\mathbf{K}^{\mathrm{MW}}_{2n+2}, 2n+2)\ar[d, equal]\\
\mathrm{BSL}_{2n+2} \ar[r, "e_{2n+2}"] &\mathrm{K}(\mathbf{K}^{\mathrm{MW}}_{2n+2}, 2n+2),
\end{tikzcd}
\end{equation}
where $e_{2n+2}\in\widetilde{\rm CH}^{2n+2}(\mathrm{BSL}_{2n+2})={\rm H}^{2n+2}(\mathrm{BSL}_{2n+2}; \mathbf{K}^{\mathrm{MW}}_{2n+2})$ is the universal Euler class. The universal Euler class restricts to the universal \emph{Borel class}, see e.g. \cite[Proposition 4.3]{HoWt}, where it is called the \emph{Pontryagin class} and denoted ${\rm p}_{n+1}(\mathrm{Sp}_{2n+2})$; we follow the nowadays common nomenclature---the Borel class, as in the newest version of \cite{PW10}. Thus the $k$-invariant $b_{n+1}\in\widetilde{\rm CH}^{2n+2}(\mathrm{BSp})={\rm H}^{2n+2}(\mathrm{BSp}; \mathbf{K}^{\mathrm{MW}}_{2n+2})$ is exactly the $n+1$-st Borel class.

The Borel classes $b_i$'s satisfy a Whitney formula (or rather, Cartan sum formula, see \cite[Theorem 4.10]{HoWt} or \cite{PW10}):
\[b_n(\xi+\xi')=\sum_{r=0}^{n}b_r(\xi)\cdot b_{n-r}(\xi')\]
for any $U\in\EuScript{S}\mathsf{m}_k$ and $(\xi, \xi')\in[U, \mathrm{BSp}]_{\mathbb{A}^1}\times[U, \mathrm{BSp}]_{\mathbb{A}^1}$. As in \Cref{comulp}, we obtain
\begin{equation}
m^*b_n=\sum_{r=0}^{n}(b_r\times b_{n-r})\in[\mathrm{BSp}\times\mathrm{BSp}, \mathrm{K}(\mathbf{K}^{\mathrm{MW}}_{2n}, 2n)]_{\mathbb{A}^1},
\end{equation}
where $b_r\times b_{n-r}:=\mu(b_r\otimes b_{n-r})$, $\mu$ being the obvious map
\[[\mathrm{BSp}, \mathrm{K}(\mathbf{K}^{\mathrm{MW}}_{2r}, 2r)]_{\mathbb{A}^1}\otimes[\mathrm{BSp}, \mathrm{K}(\mathbf{K}^{\mathrm{MW}}_{2n-2r}, 2n-2r)]_{\mathbb{A}^1}\to[\mathrm{BSp}\times\mathrm{BSp}, \mathrm{K}(\mathbf{K}^{\mathrm{MW}}_{2n}, 2n)]_{\mathbb{A}^1}\]
induced by the multiplication in the graded sheaves of Milnor-Witt K-groups $\mathbf{K}^{\mathrm{MW}}_*$.

\vspace{4mm}

If $k$ is a perfect field and $X={\rm Spec}(A)$ is a smooth affine $k$-scheme with $\dim(X)=d\leqslant 2n+1$, then an easy argument using Moore-Postnikov tower shows that any stable symplectic vector bundle $\xi$ over $X$ is represented by a rank $2n$ symplectic vector bundle $\xi_{2n}\in[X, \mathrm{BSp}_{2n}]_{\mathbb{A}^1}$, i.e. $\xi: X\to\mathrm{BSp}$ factors through the canonical map $\varphi: \mathrm{BSp}_{2n}\rightarrow\mathrm{BSp}$ (in the $\mathbb{A}^1$-homotopy category $\EuScript{H}^{\mathbb{A}^1}(k)$). If $d<2n+1$, then another Moore-Postnikov tower argument shows that the representative symplectic vector bundle $\xi_{2n}\in[X, \mathrm{BSp}_{2n}]_{\mathbb{A}^1}$ is unique. So the main problem is to consider in the case $d=2n+1$, the uniqueness of the lifting, or the cancellation property of the symplectic vector bundle $\xi_{2n}$.

As in \Cref{enu-oddrk} and \Cref{chern-delta}, we obtain the following enumeration result on symplectic vector bundles near critical rank (of course, only the $d=2n+1$ case is interesting; the $d<2n+1$ cases are almost trivial by the Moore-Postnikov tower argument), where we denote by $\EuScript{V}_{2n}^{\rm Sp}(X, \xi):=\varphi_*^{-1}(\xi)$ the set of isomorphism classes of rank-$2n$ symplectic vector bundles which represent $\xi$.
\begin{theorem}\label{borel-delta}
	Let $k$ be a perfect field with ${\rm char}(k)\neq 2$ and let $X={\rm Spec}(A)$ be a smooth affine $k$-scheme of dimension $d$ with $3\leqslant d\leqslant 2n+1$. Let $\xi$ be a stable symplectic vector bundle over $X$, whose classifying map is still denoted by $\xi: X\to\mathrm{BSp}$. Denote by
	\[\Delta(b_{n+1}, \xi):=b_{n+1}\circ T_{\xi}: {\rm KSp}_1(X)\to{\rm H}^{2n+1}(X; \mathbf{K}^{\mathrm{MW}}_{2n+2})\]
	the induced homomorphism on fundamental groups. Then there is a bijection
	\[\EuScript{V}_{2n}^{\rm Sp}(X, \xi)\longleftrightarrow{\rm coker}(\Delta(b_{n+1}, \xi)).\]
	
	Moreover, the map $\Delta(b_{n+1}, \xi)$ is given by
	\begin{equation}
	\Delta(b_{n+1}, \xi)(\beta)=(\Omega b_{n+1})(\beta)+\sum_{r=1}^{n}((\Omega b_{r})(\beta))\cdot b_{n+1-r}(\xi).
	\end{equation}
\end{theorem}

We again turn to more refined computations in order to give cancellation results on symplectic vector bundles near critical rank. As discussed above, we only need to consider the case $d=2n+1$ and we do assume this in the sequel.

Consider the following commutative diagram in $\EuScript{H}^{\mathbb{A}^1}(k)$:
\[\begin{tikzcd}[row sep = 3em, column sep = 3em]
&  & {\rm SL}_{d+1} \arrow[d, hookrightarrow] \ar[r, "H"] & {\rm Sp}_{2(d+1)}\subset{\rm Sp}\ar[r, "\Omega b_r"] &\mathrm{K}(\mathbf{K}^{\mathrm{MW}}_{2r}, 2r-1) \\
X\arrow[r, "{(a, b)}"]  &  Q_{2d+1} \arrow[r, "\alpha_{d+1}"] \arrow[ru, "\beta_{d+1}"]          & {\rm SL}_{2^d},
\end{tikzcd}\]
where $H: {\rm SL}_r\to{\rm Sp}_{2r}, A\mapsto\left( \begin{array} { c c } { A } & \\  & { (A^T)^{-1} }  \end{array} \right)$. So
\begin{equation*}
(\Omega b_r)(H\beta_{d+1})\in\widetilde{{\rm H}}^{2r-1}(Q_{2d+1}; \mathbf{K}^{\mathrm{MW}}_{2r})\cong \begin{cases}
0,  & 2r\neq d+1; \\
\mathbf{K}^{\mathrm{MW}}_{0}(k)={\rm GW}(k), & 2r= d+1\ (r=n+1).
\end{cases}
\end{equation*}
It follows that $ (\Omega b_r)(H\beta_{d+1})=0, \forall r\neq n+1$  and that
\begin{equation}
\Delta(b_{n+1}, \xi)(H\beta_{d+1})=(\Omega b_{n+1})(H\beta_{d+1}).
\end{equation}

Let $F: {\rm Sp}\hookrightarrow{\rm SL}$ be the ``forgetful'' map induced by the inclusions ${\rm Sp}_{2r}\hookrightarrow{\rm SL}_{2r}$, then we have the induced maps
\[{\rm SK}_1(X)\xrightarrow{H}{\rm KSp}_1(X)\xrightarrow{F}{\rm SK}_1(X).\]
By \cite[Lemma 5.3]{Sus77}, $[\alpha_{d+1}(a, b)]+[\alpha_{d+1}(a, b)^T]=0$ in ${\rm SK}_1(X)$, for any $A$-point $(a, b)$ of $Q_{2d+1}$ (since the matrix $I_r$ in \cite[Lemma 5.3]{Sus77} is in ${\rm SL}_{2^r}(\mathbb{Z})$, hence elementary). Thus
\[FH([\alpha_{d+1}(a, b)])=\left[\left( \begin{array} { c c } { \alpha_{d+1}(a, b) } & \\  & { (\alpha_{d+1}(a, b)^T)^{-1} }  \end{array} \right)\right]=[\alpha_{d+1}(a, b)]-[\alpha_{d+1}(a, b)^T]=2[\alpha_{d+1}(a, b)].\]
As $[\alpha_{d+1}(a, b)]=[\beta_{d+1}(a, b)]\in{\rm SK}_1(X)$, we also get
\[FH([\beta_{d+1}(a, b)])=2[\beta_{d+1}(a, b)].\]

The diagram (\ref{bsp-kinv}) then yields the following commutative diagrams
\begin{equation}\label{bor-ch}
\begin{tikzcd}
\mathrm{BSp} \ar[r, "b_{n+1}"]\ar[d, "F"'] &\mathrm{K}(\mathbf{K}^{\mathrm{MW}}_{d+1}, d+1)\ar[d, "\tau"]\\
\mathrm{BSL} \ar[r, "c_{d+1}"] &\mathrm{K}(\mathbf{K}^{\mathrm{M}}_{d+1}, d+1),
\end{tikzcd}
\qquad
\text{and hence}
\qquad
\begin{tikzcd}
\mathrm{Sp} \ar[r, "\Omega b_{n+1}"]\ar[d, "F"'] &\mathrm{K}(\mathbf{K}^{\mathrm{MW}}_{d+1}, d)\ar[d, "\tau"]\\
\mathrm{BSL} \ar[r, "\Omega c_{d+1}"] &\mathrm{K}(\mathbf{K}^{\mathrm{M}}_{d+1}, d).
\end{tikzcd}
\end{equation}
Composing the latter with the map $H: {\rm SL}\to{\rm Sp}$ and applying $[X, -]_{\mathbb{A}^1}$ we see that
\[\tau(\Omega b_{n+1})H=(\Omega c_{d+1})FH: {\rm SK}_1(X)\to{\rm H}^d(X; \mathbf{K}^{\mathrm{M}}_{d+1}).\]

Considering the effect on $[\beta_{d+1}(a, b)]\in{\rm SK}_1(X)$ for any $A$-point $(a, b)$ of $Q_{2d+1}$, we find that
\begin{align*}
\tau(\Omega b_{n+1})H([\beta_{d+1}(a, b)])&=2(\Omega c_{d+1})([\beta_{d+1}(a, b)])=\pm2\cdot\tau([a_0^{d!},a_1, \cdots, a_d])\\
&=\tau(\pm2\cdot \frac{d!}{2}h\cdot[(a, b)])=\tau(\pm d!\cdot h\cdot[(a, b)])
\end{align*}
by \Cref{factoresu} and \cref{chern-sus}, where we write $a=(a_0,a_1, \cdots, a_d)$ and $[(a, b)]$ denotes the image of $(a, b)\in Q_{2d+1}(A)$ in $[X, Q_{2d+1}]_{\mathbb{A}^1}\cong{\rm H}^d(X; \mathbf{K}^{\mathrm{MW}}_{d+1})$.

As noted before, if ${\rm char}(k)\neq 2$ and ${\rm c.d.}_2(k)\leqslant 2$, then ${\rm H}^d(X; \mathbf{I}^{d+2})=0$ and the reduction map $\tau: {\rm H}^d(X; \mathbf{K}^{\mathrm{MW}}_{d+1})\to{\rm H}^d(X; \mathbf{K}^{\mathrm{M}}_{d+1})$ is an isomorphism (if ${\rm c.d.}(k)\leqslant 1$, one can also use the Arason-Pfister Hauptsatz and the motivic Bloch-Kato conjecture to show that $\mathbf{I}^{d+2}|_X=0$). Then we must have
\[\Delta(b_{n+1}, \xi)H([\beta_{d+1}(a, b)])=(\Omega b_{n+1})H([\beta_{d+1}(a, b)])=\pm d!\cdot h\cdot[(a, b)].\]
This says that $d!\cdot h\cdot{\rm H}^d(X; \mathbf{K}^{\mathrm{MW}}_{d+1})\subset{\rm im}\ \Delta(b_{n+1}, \xi), (2\cdot d!)\cdot{\rm H}^d(X; \mathbf{K}^{\mathrm{M}}_{d+1})\subset{\rm im}(\tau\Delta(b_{d+1}, \xi))$. Since $h$ goes to $2$ under $\tau$ and $\tau: {\rm H}^d(X; \mathbf{K}^{\mathrm{MW}}_{d+1})\to{\rm H}^d(X; \mathbf{K}^{\mathrm{M}}_{d+1})$ is an isomorphism, in fact we have $(d!\cdot h)\cdot{\rm H}^d(X; \mathbf{K}^{\mathrm{MW}}_{d+1})=(2\cdot d!)\cdot{\rm H}^d(X; \mathbf{K}^{\mathrm{MW}}_{d+1})\subset{\rm im}\ \Delta(b_{d+1}, \xi)$. We summarize the discussion as follows.

\begin{theorem}\label{enu-spbdl}
	Let  $k$ be a perfect field with ${\rm char}(k)\neq 2$ and ${\rm c.d.}_2(k)\leqslant 2$. Let $X={\rm Spec}(A)$ be a smooth affine $k$-scheme of dimension $d=2n+1\geqslant 3$. Let $\xi$ be a stable symplectic vector bundle over $X$, whose classifying map is still denoted by $\xi: X\to\mathrm{BSp}$.
	\begin{enumerate}[label=\emph{(\arabic*)}]
		\item We have $(2\cdot d!)\cdot{\rm H}^d(X; \mathbf{K}^{\mathrm{M}}_{d+1})\subset{\rm im}(\tau\Delta(b_{n+1}, \xi))$.
		\item If ${\rm H}^d(X; \mathbf{K}^{\mathrm{M}}_{d+1})$ is $d!$-divisible, then ${\rm coker}\Delta(b_{n+1}, \xi)=0$. In this case, any rank $2n=d-1$ symplectic vector bundle is cancellative. Moreover,  the map
		\[(b_{n+1})_*: \pi_1({\rm RMap}(X, \mathrm{BSp}), \xi)\to\pi_1({\rm RMap}(X, \mathrm{K}(\mathbf{K}^{\mathrm{MW}}_{d+1}, d+1)), 0)={\rm H}^{d}(X; \mathbf{K}^{\mathrm{MW}}_{d+1})\]
		is surjective for every $\xi\in[X, \mathrm{BSp}]_{\mathbb{A}^1}$.
		\item If ${\rm c.d.}(k)\leqslant 1$, then $\xi$ is represented by a unique symplectic vector bundle over $X$ of rank $2n=d-1$.
	\end{enumerate}
\end{theorem}
Note that for statement (2) above, if an abelian group is $d!$-divisible, then it is also $2\cdot d!$-divisible. The last statement above is again guaranteed by Voevodsky's confirmation of the motivic Bloch-Kato conjecture as in \Cref{d!-div}.

\vspace{5mm}

{\noindent\bf Acknowledgements.}
First of all, I will express my deep indebtedness to my Ph.D. advisor, professor Jean Fasel, for many discussions on mathematics and for sharing numerous mathematical ideas with me, many of which are used in this work, e.g. the idea of using Suslin matrices. I also thank Tariq Syed for discussing many things on $\mathbb{A}^1$-homotopy theory and on vector bundles, and Aravind Asok for pointing out the paper of James and Thomas and his comment that our result extends to general base fields. Special thanks go to Daniel R. Grayson for useful discussions and to Burt Totaro for helpful communication regarding the Chow K\"unneth formula in \cite{TotCw97,Tot14} and directing me to \cite{Tot16}. I'm also grateful to the anonymous referees for their useful comments, pointing out gaps and giving revision suggestions, which improved the exposition considerably.

\providecommand{\bysame}{\leavevmode\hbox to3em{\hrulefill}\thinspace}
\providecommand{\MR}{\relax\ifhmode\unskip\space\fi MR }
\providecommand{\MRhref}[2]{%
  \href{http://www.ams.org/mathscinet-getitem?mr=#1}{#2}
}
\providecommand{\href}[2]{#2}

\vspace{8mm}

\noindent{\textsc{School of Mathematical Sciences,\\ The University of Nottingham,\\ University Park, Nottingham, NG7 2RD}}

\vspace{2mm}

\noindent{Email: pengdudp@gmail.com}

\end{document}